\documentclass{amsart}
  \usepackage{amscd,amssymb,epsfig}
  \usepackage{epic,eepic}

\oddsidemargin -0.6cm
                    \evensidemargin -0.6cm
                    \topmargin -1.0cm
                    \headheight 1cm
                    \headsep .5cm

                    \textwidth 12.4cm
                    \textheight 19cm

                    \numberwithin{equation}{section}

                    \newtheorem{propo}{Proposition}[section]
                    
                    \newtheorem{theor}[propo]{Theorem}
                    \newtheorem{lemma}[propo]{Lemma}
                    \theoremstyle{definition}

                    \theoremstyle{remark}

            \newcommand{\CC}{\mathbb{C}}
            
                    \newcommand{\ZZ}{\mathbb{Z}}
                    \newcommand{\RR}{\mathbb{R}}

                    \newcommand{\Hom}{\operatorname{Hom}}
            \newcommand{\Ker}{\operatorname{Ker}}

                    \newcommand{\id}{\operatorname{id}}

\newcommand{\Int}{\operatorname{Int}}

\def\1{\hbox{\rm\rlap {1}\hskip .03in{\rm I}}}
\def\Int{{ \rm {Int}}}

\def\id{{  \rm {id}}}
\def\Mat{{  \rm {Mat}}}

\def\Edg{{  \rm {Edg}}}

\def\vrt {{  \rm {Vert}}}

\def\orlin {G}
\def\rup {B}
\def\ol {I}

  \def\Hom{{ \rm  {Hom}}}

\def\rel {{ \rm  {rel}}}

  \def\exp {{ \rm  {exp}}}

  \def\log {{ \rm  {log}}}

\def\Tr {{ \rm  {Tr}}}

  \def\Bas {{ \rm  {bas}}}

\def\Tr {{ \rm  {Tr}}}

\def\mod {{ \rm  {mod}}}

\def\Dim {{ \rm  {Dim}}}
  \def\dim  {{ \rm  {dim}}}

\def\mod {{ \rm  {mod}}\,}
  
  \def\mid {{ \rm  {Bas}}}

\def\id {{ \rm  {id}}}

\def\pr {{ \rm  {pr}}}

\def\Ker {{ \rm  {Ker}}}
  \def\Im {{ \rm  {Im}}}

\def\Irr {{ \rm  {Irr}}}

              \begin{document}
      \title{Sections of fiber bundles over surfaces}
                    \author[Vladimir Turaev]{Vladimir Turaev}
                    \address{%
              Department of Mathematics \newline
\indent  Indiana University \newline
                    \indent Bloomington IN47405 \newline
                    \indent USA\newline
\indent vtouraev@indiana.edu}
                    \begin{abstract} We study the existence problem and
                      the enumeration problem for sections
                    of  Serre
fibrations over compact orientable surfaces. When the fundamental
group of the fiber is finite, a complete solution is given in terms
of 2-dimensional cohomology classes associated with certain
irreducible representations of this group. The proofs are based on
Topological Quantum Field Theory.
                    \end{abstract}
                    \maketitle

AMS Subject classification: 57R20, 57R22

Keywords: surfaces, fibrations, sections, topological quantum field theory

  \section{Introduction}\label{int}

      A
  {\it section}  of a
fibration  $p:E\to W$  is a continuous mapping $s:W\to E$ such that
$ps=\id_W$.  The existence problem for sections  is  fundamental in
the theory of   fibrations. If $p$ has sections, then one may ask
about the number of their homotopy classes. We address these
problems for Serre fibrations over compact orientable surfaces and
solve them under certain assumptions on the fiber.

Assume   that $p:E\to W$ is a Serre fibration over a compact
connected oriented surface $W$ of positive genus.
  If $\partial W\neq
  \emptyset$,  then we fix    a continuous section
$s_\partial:\partial W\to E$ of $p $
  over $\partial W$ (so that $ps_\partial=\id_{\partial W}$) and search for its
  extensions
    to $W$.
By definition,  a section  $s:W\to E$ of $p$ extends $s_\partial$ if
$s\vert_{\partial W}=s_\partial$. For closed $W$, this condition is
void. The reader wishing to avoid technicalities related to   $\partial W$ and $s_\partial$  may pursue the case $\partial W=\emptyset$.

One can approach the existence problem for sections via the
obstruction theory. To this end,  fix    base
  points  $w \in \Int \, W=W-\partial W$ and $e\in  p^{-1}(w)\subset E$.  Set
$\pi'=\pi_1(E,e) $,  $ \pi=\pi_1(W,w)$, and consider the
  homomorphism
  $p_\#:\pi'\to \pi$ induced by~$p$.  A section of
$p$ carrying $w$ to $e$ induces a
  right inverse $\pi\to \pi'$ of $p_\#$. A simple obstruction theory yields that
  any right inverse   of~$p_\# $ can be realized by a section $s$  of~$p$.
The condition $s\vert_{\partial W}=s_\partial$
  may  also be translated into the algebraic language.
However, this   reformulation in terms of the fundamental groups is
as difficult as the original problem and  does not  shed much light.

 We give a necessary and sufficient condition for the existence of
a section of $p$ extending~$s_\partial$ in the case where the
fundamental group of the fiber $F=p^{-1}(w)$ of $p$ is finite.   Our
condition involves 2-dimensional characteristic classes of~$p:E\to
W$ derived from certain linear representations of  $\pi_1(F)$. If,
additionally, $\pi_2(F)=0$, then we give a formula for the number of
homotopy classes of such sections.

We  now  state  our main theorems.  Let $C_1,\ldots, C_m$
  be the  components of $\partial W$
  endowed with orientation  induced by that of $W$. Here
    $m \geq 0$ is the number of   components of $\partial W$.
For
        $k=1, \ldots , m$, fix an embedded path $c_k:[0,1]\to W$ leading
        from   $w\in \Int\,  W$
          to  a point  of $  C_k $.
        We  assume that these $m$ paths  meet only at~$w$.
For each $k=1, \ldots , m$, fix a lift $\widetilde c_k:[0,1]\to E$
of $c_k$ leading from $e$
  to    $  s_\partial (c_k(1))$.
 Conjugating the loop $
s_\partial \vert_{C_k}$ by $ \widetilde c_k$, we obtain a loop in
$E$ based at~$e$.
  It represents a certain  $\gamma_k\in \pi'$.

Fix an algebraically closed field of  characteristic zero $K$. Set
$\Phi= \pi_1(F,e)$ and denote by  $\Irr (\Phi)$ the set of
equivalence classes of irreducible finite dimensional  linear
representation of $\Phi$ over $K$. Since $\pi_2(W)=0$,
  the inclusion $F\hookrightarrow E$ induces an injection  $\Phi\hookrightarrow  \pi'$
  so that we can view $\Phi$ as a subgroup of  $\pi'$.
Clearly, $\Phi=\Ker\, (p_\#:\pi'\to \pi) $ is   normal in  $\pi'$.
The action of   $\pi'$   on $\Phi$ by conjugations induces an action
of $ \pi' $ on $\Irr (\Phi)$. The restriction of the latter action
to $\Phi$ is trivial, and    we obtain  an action of $\pi=\pi'/\Phi$
on $\Irr (\Phi)$. Denote the fixed point set of
  this action by $I_0$. For any $\rho \in I_0$ and
any $ a \in \pi'$, there is a  matrix $ M_a \in GL_{\dim \rho} (K)$
such that $\rho ( a^{-1} \, h\,  a )= M_a^{-1}  \, \rho (h) \, M_a$
for all $h\in \Phi$.  Set $t_\rho(a)=0$ if $\Tr M_a=0$ and
$t_\rho(a)=1$ if $\Tr M_a \neq 0$, where $\Tr$ is the  matrix trace.
The number $t_\rho(a)$  is well-defined because  $M_a$ is  unique up
to multiplication by elements of $K^*=K\backslash \{0\}$. In Section
\ref{section2} we    associate  with $\rho \in I_0$ and the set
$\gamma=\{\gamma_k\}_{k=1}^m\subset \pi'$    a cohomology class
$\zeta^{\rho, \gamma} \in H^2(W,
\partial W;K^*)$.

\begin{theor}\label{t1}  Let the  fiber $F$ of $p:E\to W$ be path-connected and
the  group $\Phi=\pi_1(F,e)$ be finite.  The  section $s_\partial$
  of $p$ over $\partial W$ extends to a section of $p$ over~$W$
if and only if
\begin{equation}\label{1.2.a-}
\sum_{\rho\in I_0  } \left\{
( {\dim \, \rho})^{ \chi(W)} \, \zeta^{\rho, \gamma}  ([W, \partial W])\,
  \prod_{k=1}^m t_\rho (\gamma_k) \right\} \neq 0
.\end{equation}
\end{theor}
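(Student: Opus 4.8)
The plan is to reduce the section-existence problem to a computation in a TQFT associated with the finite group $\Phi$ and its conjugation action, and to identify the formula \eqref{1.2.a-} with the total dimension of the relevant state space.

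\medskip

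First I would recall the obstruction-theoretic reformulation sketched in the introduction: a section of $p$ over $W$ extending $s_\partial$ exists if and only if there is a right inverse $\sigma:\pi\to\pi'$ of $p_\#$ whose restriction to $\partial W$ agrees, up to the chosen conjugations by the $\widetilde c_k$, with the classes $\gamma_k$. Equivalently, writing $\pi$ as the fundamental group of the surface with its standard presentation (generators $x_1,y_1,\ldots,x_g,y_g$ for the handles, $z_1,\ldots,z_m$ for the boundary components, and the relation $\prod[x_i,y_i]\prod z_k=1$), a section exists if and only if one can lift each generator to $\pi'$, lifting $z_k$ to a conjugate of $\gamma_k$, so that the surface relation holds in $\pi'$. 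Since $\Phi=\ker p_\#$ is finite, counting such lifts (with signs or weights coming from the fiber data) is a finite problem, and the natural tool is a finite-group TQFT of Dijkgraaf–Witten type twisted by the extension $1\to\Phi\to\pi'\to\pi\to1$ and by the boundary data $\gamma$.

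\medskip

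The key steps, in order: (1) Build, in Section~\ref{section2}, the $2$-dimensional TQFT whose value on a circle is the vector space spanned by $\Irr(\Phi)$ and whose structure constants encode the $\pi$-action on $\Irr(\Phi)$ and the cocycle data $\zeta^{\rho,\gamma}$; this is where the class $\zeta^{\rho,\gamma}\in H^2(W,\partial W;K^*)$ enters as a multiplicative anomaly. (2) Show that the ``partition function'' $Z(W)$ of this TQFT, evaluated on $W$ with boundary coloured by the $\gamma_k$ via the numbers $t_\rho(\gamma_k)$, equals the left-hand side of \eqref{1.2.a-}: the factor $(\dim\rho)^{\chi(W)}$ is the standard Euler-characteristic normalisation (a handle contributes $(\dim\rho)^{-2}$, a disk/boundary contributes $\dim\rho$, reflecting $\chi(W)=2-2g-m$), the evaluation $\zeta^{\rho,\gamma}([W,\partial W])$ is the holonomy of the twisting cocycle on the fundamental class, and $\prod_k t_\rho(\gamma_k)$ records whether the boundary monodromy is compatible with $\rho$ being fixed. (3) Prove, by the gluing/cut-and-paste axioms of the TQFT applied to a pair-of-pants decomposition of $W$, that $Z(W)\neq 0$ if and only if the relevant moduli count is nonempty, i.e. if and only if a compatible right inverse $\sigma$ — hence a section — exists. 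Concretely, $Z(W)$ is a sum over isomorphism classes of flat $\Phi$-bundles (equivalently, of lifts of $\pi$ to $\pi'$ modulo conjugation) of a nonzero weight, and the characteristic-zero hypothesis on $K$ guarantees these weights (which involve $|\mathrm{Aut}|^{-1}$ and traces of the $M_a$) do not cancel pathologically, so nonvanishing of the sum is equivalent to nonemptiness of the index set.

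\medskip

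The main obstacle I anticipate is step~(3): showing that the TQFT sum detects \emph{existence} rather than merely a signed count that could vanish by cancellation. The subtlety is that a priori $Z(W)$ is an alternating-looking sum over $\Irr(\Phi)$, and one must rule out accidental cancellation between different irreducibles. The resolution should come from positivity/orthogonality of characters over an algebraically closed field of characteristic zero: decomposing the regular representation and using that the structure constants of the Dijkgraaf–Witten TQFT for a finite group are governed by the fusion rules of $\mathrm{Rep}(\Phi)$, one shows $Z(W)$ is, up to a positive rational factor, the number of homomorphisms $\pi\to\pi'$ lifting $\mathrm{id}_\pi$ with the prescribed boundary behaviour — and a nonnegative integer count is nonzero exactly when the set it counts is nonempty. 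Handling the boundary/relative terms $t_\rho(\gamma_k)$ correctly in this argument — in particular checking that $t_\rho(\gamma_k)=1$ is precisely the open condition for $\gamma_k$ to extend over the corresponding boundary collar compatibly with $\rho$ — is the technical heart, and it is exactly here that the definition of $t_\rho$ via $\Tr M_a\neq0$ (a Frobenius–Schur-type indicator) does its work.
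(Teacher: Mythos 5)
Your proposal is correct and follows essentially the same route as the paper: it reduces Theorem~\ref{t1} to a counting formula (the paper's Theorem~\ref{t2}, proved via an HQFT attached to $p_\#:\pi'\to\pi$), and then observes that the left-hand side of \eqref{1.2.a-} differs from $|\mathcal{S}(p,s_\partial)|$ by the strictly positive factor $|\Phi|^{\chi(W)-1}$, so nonvanishing is equivalent to the nonemptiness of the set of sections. Your step (3), ruling out accidental cancellation by recognizing the sum as (up to a positive rational) a nonnegative integer count, is exactly the paper's mechanism; the only cosmetic differences are that the paper counts bubble-equivalence classes of \emph{pointed} sections (with the conjugating elements $a_k$ recorded as part of the data, not quotiented out) and builds the HQFT from the biangular $G$-algebra $K[\pi']$ rather than a Dijkgraaf--Witten theory for $\Phi$ per se, but these do not affect the validity of the argument.
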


Here  $\chi(W)$ is the Euler characteristic of $W$ and
    $ \zeta^{\rho, \gamma} ([W, \partial W]) \in K^*$ is  the
    evaluation of
$ \zeta^{\rho, \gamma}  $  on the fundamental class $[W, \partial
W]\in H_2(W, \partial W;\ZZ) $. The evaluation   is
induced by the bilinear form $K^*\times \ZZ\to K^*, (k,n)\mapsto
k^n$.

    The sum in (\ref{1.2.a-})
is finite  because the sets   $\Irr(\Phi)$ and $I_0\subset
\Irr(\Phi)$ are finite. This sum  is non-empty because the trivial
1-dimensional representation of $\Phi$  belongs to $I_0$. The
individual terms in (\ref{1.2.a-}) may depend on the paths $
c_1,..., c_m$ but, as we shall see, their sum does not depend on
these paths and is a non-negative rational number. Theorem \ref{t1}
shows that $p$ has no sections extending $s_\partial$  if and only
if this number takes its minimal possible value (equal to zero).

Theorem \ref{t1}  follows from  a  Lefschetz-type formula for the
number of extensions of $s_\partial $ to $W$ considered up to an
appropriate equivalence relation. Here it is convenient to switch to
  pointed sections. Recall the base points $e\in E$
and $w=p(e)\in \Int\, W$. A section $s:W\to E$ of $ p$ is {\it
pointed} if $s(w)=e$. Two pointed sections of $p$ are {\it
homotopic} if they can be
  deformed into each other $\rel \, \partial W$ in the class of  pointed
sections  of $p$.  Two pointed sections of $p $ are related by {\it
bubbling} if they are equal  outside a small open 2-disk $D\subset
\Int \, W-\{w\}$. The restrictions of such two sections on the
closed 2-disk $\overline D\subset W$  form a mapping $S^2\to E$, a
\lq\lq bubble". Two pointed sections of $ p$ are
    {\it
bubble equivalent}    if they can be obtained from each other by a
finite sequence of  bubblings.  The set of bubble equivalence
classes of pointed sections of $p$ extending $s_\partial$ is
denoted~${\mathcal S}(p, s_\partial)$.    Decomposing a  deformation
of a section  into local deformations, one  observes that homotopic
pointed sections are bubble equivalent.  If  $\pi_2(F)=0$,    then
the converse is also true, and  the bubble equivalence is just the
homotopy.

\begin{theor}\label{t2}  Let the  fiber $F$ of $p:E\to W$ be path-connected and
the  group $\Phi=\pi_1(F,e)$ be finite. Then
\begin{equation}\label{t2-}\vert {\mathcal S}(p, s_\partial)\vert=\vert  \Phi \vert \,
\sum_{\rho\in I_0  } \left\{
(\vert  \Phi
\vert/\dim \, \rho)^{-\chi(W)} \, \zeta^{\rho, \gamma}  ([W, \partial W])\,
  \prod_{k=1}^m t_\rho (\gamma_k) \right\}\, ,
\end{equation}
where the vertical bars stand for the cardinality of a set.
\end{theor}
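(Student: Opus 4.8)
My plan is to build a $2$-dimensional Topological Quantum Field Theory (TQFT) whose value on the surface $W$ — equipped with the boundary data $s_\partial$ — is exactly the weighted count $|\mathcal{S}(p,s_\partial)|$, and then evaluate that TQFT by cutting $W$ into elementary pieces (disks, pairs of pants, handles) and using the gluing axiom. The state space of the TQFT will be a $K$-vector space built from the set of conjugacy-type data on $\Phi=\pi_1(F,e)$ seen inside $\pi'$; concretely, associated to a circle one takes the free $K$-module on homotopy classes of sections over that circle (equivalently, on the relevant twisted-conjugacy classes of $\Phi$-valued holonomy), and the resulting algebra is a direct sum of matrix-like blocks indexed by $\Irr(\Phi)$, by a twisted Peter--Weyl / Frobenius decomposition. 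The crucial point is that counting pointed sections of a fibration over a surface up to bubble equivalence is a "finite homotopy type" counting problem of exactly the sort that Dijkgraaf--Witten / Freed--Quinn style finite-gauge-theory TQFTs compute: the bubbling relation is precisely what makes the count well-defined and multiplicative under gluing, with the factor $|\Phi|^{\pm 1}$ per cut encoding the groupoid cardinality (the order of the automorphism group $\Phi$ acting by the $\pi_2$-ambiguity which bubbling kills).

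The key steps, in order, are: (i) Set up the TQFT: define the state space $\mathcal{A}$ attached to a circle with a marked lift datum, show it is a commutative Frobenius algebra over $K$, and identify its block decomposition via the fixed-point set $I_0\subset\Irr(\Phi)$ — the blocks surviving the $\pi=\pi'/\Phi$ action are exactly those indexed by $I_0$, and in the $k$-th block the idempotent acts on the boundary datum $\gamma_k$ with the scalar $t_\rho(\gamma_k)$ (this is where the trace $\Tr M_a$ and its vanishing enter: a boundary section over $C_k$ contributes nothing unless the character evaluated on $\gamma_k$ is nonzero, which is the content of $t_\rho(\gamma_k)$). (ii) Compute the structure constants: the handle operator acts on the $\rho$-block as multiplication by $(|\Phi|/\dim\rho)^{?}$ and the pair-of-pants/cap contributes the corresponding powers, so that gluing $W$ from its pieces produces, on the $\rho$-block, the scalar $(|\Phi|/\dim\rho)^{-\chi(W)}$ times the product of the $t_\rho(\gamma_k)$. (iii) Incorporate the twisting by $p:E\to W$: the characteristic class $\zeta^{\rho,\gamma}\in H^2(W,\partial W;K^*)$ constructed in Section \ref{section2} is precisely the anomaly/cocycle that twists the gluing, so its evaluation on $[W,\partial W]$ multiplies the $\rho$-block contribution — this is the Dijkgraaf--Witten twist by a $2$-cocycle with values in $K^*$. (iv) Assemble: summing over the blocks $\rho\in I_0$ gives the right-hand side of \eqref{t2-}, and the overall prefactor $|\Phi|$ comes from the pointing (fixing $s(w)=e$ cuts one copy of the automorphism group, or equivalently from the normalization of the counit at the marked point $w$).

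The main obstacle will be step (i)–(ii): making the identification of the Frobenius algebra $\mathcal{A}$ and its structure constants completely rigorous. One must show carefully that (a) homotopy-classes-up-to-bubbling of sections over a circle, together with the twisting by the restriction of $p$, really do form a module on which the mapping-class/gluing operations act as claimed; (b) the Frobenius form (from the disk-with-two-outputs-and-nothing-else, i.e. the trace pairing) has the nondegeneracy needed so that the block decomposition over $I_0$ is valid and the idempotents $e_\rho$ are as described; and (c) the exponent of $\dim\rho$ and $|\Phi|$ in the handle operator genuinely comes out to match $-\chi(W)=2g-2+m$ after one tracks through the Euler-characteristic bookkeeping of the cell decomposition. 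Once the TQFT is in hand, Theorem~\ref{t1} follows immediately from Theorem~\ref{t2}, since $|\mathcal{S}(p,s_\partial)|\neq 0$ iff the sum in \eqref{t2-} is nonzero iff the (manifestly equivalent, up to the positive factor $|\Phi|$ and the substitution $(|\Phi|/\dim\rho)^{-\chi(W)}=|\Phi|^{-\chi(W)}(\dim\rho)^{\chi(W)}$) sum in \eqref{1.2.a-} is nonzero.
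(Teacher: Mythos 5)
Your plan is, in its essentials, the same route as the paper's: build a 2-dimensional field theory from the group extension $\Phi\hookrightarrow\pi'\twoheadrightarrow\pi$, decompose the resulting Frobenius-type algebra into simple blocks indexed by irreducibles of $\Phi$, identify which blocks contribute, and evaluate per block via Euler-characteristic scaling and a $K^*$-valued $2$-cocycle twist. The paper carries this out in the language of $X$-HQFTs (HQFTs with target $X=K(\pi,1)$) and their underlying crossed $\pi$-algebras, with $B=K[\pi']$ as a biangular $\pi$-algebra whose $\pi$-center is semisimple; the cohomology classes $\zeta^{\rho,\gamma}$ emerge from the classification of simple crossed $G$-algebras by triples (subgroup, $H^2$-class, scalar).

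One place where your plan actually misstates the mechanism, and where you would get stuck if you followed it literally, is the role of $I_0$. You write that "the blocks surviving the $\pi$-action are exactly those indexed by $I_0$." That is not what happens. \emph{All} of $\Irr(\Phi)$ appears as basic idempotents of $L_1=\psi_1(K[\Phi])$; the $\pi$-action permutes them, and the crossed $\pi$-algebra splits into simple summands indexed by the $\pi$-\emph{orbits} on $\Irr(\Phi)$, not by $\Irr(\Phi)$ itself. Every orbit, not just the fixed points, contributes a simple summand. The reason non-singleton orbits contribute $0$ to $\tau(W,\widetilde g)$ is a separate (and nontrivial) fact: a simple crossed $\pi$-algebra whose idempotent has stabilizer a proper subgroup $H\subsetneq\pi$ underlies an HQFT that is a transfer from $K(H,1)$, and such a transferred theory vanishes on any $X$-surface whose holonomy $\pi_1(W)\to\pi$ is surjective (there are no lifts to the $H$-covering). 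This transfer-vanishing argument is the crux of restricting the sum to $I_0$, and it is a genuinely different mechanism from the representation-theoretic fixed-point intuition in your plan. Without it, your cutting-and-gluing computation would deliver contributions from all orbits, not just fixed points, and you would not obtain \eqref{t2-}. The remaining items you flag as "the main obstacle" — making the Frobenius algebra and its structure constants precise, and matching the $\dim\rho$ exponents — are indeed where the real work lies, and the paper resolves them through the $G$-center construction, the classification Lemma~\ref{lelecrocro2}, and Theorem~\ref{lik}.
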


Theorem \ref{t2}  implies Theorem \ref{t1} (one should observe that
when  $F$  is path-connected, the bundle $p$ has a section if and
only if $p$ has a pointed section). Theorems \ref{t1} and \ref{t2}
are satisfactory from a topologist's viewpoint because they provide
a complete solution  to natural geometric problems concerning the
bundle~$p$  in terms of certain characteristic  classes associated
with  $p$.

  Theorems \ref{t1} and  \ref{t2}     are new and nontrivial already
  in the case $\partial W=\emptyset$. They were announced in this
  case
  in \cite{Tu4}.   Note that if  $\partial W=\emptyset$, then
  $\gamma=\emptyset$, $m=0$, and
  the product
  $\prod_{k=1}^m t_\rho (\gamma_k)=1$  may be deleted from Formulas
  (\ref{1.2.a-}) and (\ref{t2-}).

The connectedness assumption in   Theorems \ref{t1} and  \ref{t2} is
not a serious restriction. For a section of $p$ to exist, the
homomorphism $p_\#:\pi'\to \pi$ has to be surjective and so we must
have $\pi_0(F)=\pi_0(E)$. It would be very interesting to find a
version of  these theorems  for infinite $\Phi$.

The cohomology classes $\zeta^{\rho, \gamma}$ in (\ref{t2-}) may be
efficiently computed. This yields a number of interesting
applications  in topology and group theory. I mention  here  a few
applications. In the case where $\partial W=\emptyset$ and   $\Phi$
is finite abelian, the bundle $p:E\to W$ has a section if and only
if the induced homomorphism $p_\ast: H_2(E;\ZZ)\to H_2(W;\ZZ) $ is
surjective. The same equivalence holds for any finite $\Phi$
provided $\partial W=\emptyset$ and the genus of $W$ is greater than
or equal to $(1/2)\, \log_2 \vert \Phi\vert$. For a deduction of
these claims from Theorem \ref{t2}, see \cite{Tu4}. Similar claims
hold for $\partial
  W \neq \emptyset$, though the surjectivity of $p_\ast$ should be
  replaced  with the condition that the 1-cycle $s_\partial (\partial
  W)$ is homologically trivial in~$E$. Another application \cite{Tu4} concerns
 non-abelian cohomology of surfaces.

As a typical  group-theoretic  application of Theorem \ref{t2}, I
mention the following result of M.\ Natapov and myself \cite{NT}.
Let $a,b$ be two commuting elements of a group $G$. Suppose that $G$
  contains as a normal subgroup the quartenion group $Q=\{\pm 1,
\pm i, \pm j, \pm k\}$. Question: for how many pairs $\alpha,
\beta\in Q$ the elements $a\alpha, b\beta\in G$ commute? Answer: the
number of such pairs is equal to $ 8$,  $16$, $24$, or $40 $ and all
the numbers $ 8$,  $16$,
  $24$,  $40 $ are realized by some $G,a,b$. Similar results may be
  obtained
for any finite group  through a study of its representations.

   Theorem \ref{t1} is
uninteresting  for trivial bundles (product    fibrations)  because
they certainly have sections. Theorem \ref{t2}  for trivial bundles
is equivalent
  to the classical formula for the number of  homomorphisms
  $\pi=\pi_1(W,w)\to \Phi$  due to Frobenius    \cite{Fr}
  and   Mednykh
\cite{Me}, see   Section \ref{Tcoatb}.

Theorem \ref{t2} can be extended to non-orientable surfaces. In
another direction,  the sections of $p:E\to W$ can be counted with
weights determined by an element of $H^2(E, s_\partial (\partial
W);K^*)$. This will be discussed elsewhere.

The  proof of Theorem  \ref{t2}  is based on  techniques of quantum
topology and specifically utilizes 2-dimensional Homotopy Quantum
Field Theory (HQFT). HQFTs are versions of   familiar Topological
Quantum Field Theories (TQFTs) for manifolds and cobordisms endowed
with maps into a certain space. HQFTs were introduced in my
unpublished preprint \cite{Tu2}. Two-dimensional HQFTs with simply
connected target were independently introduced in \cite{BT}. The
bulk of the paper is devoted to a study of 2-dimensional HQFTs whose
target is an Eilenberg-MacLane space of type $K(G,1)$, where $G$ is
a group. This  study is strictly limited here to notions and
  results   needed for the proof of Theorem  \ref{t2}.
Our exposition is self-contained and  does not require preliminary knowledge of TQFTs or HQFTs.

Here is an outline of the proof of  Theorem \ref{t2}. We rewrite the
number $\vert {\mathcal S}(p, s_\partial)\vert$
  in terms of a 2-dimensional HQFT   associated with
$p_\#:\pi'\to \pi$. Analyzing the underlying  algebra  of this HQFT,
we split it   as a direct sum of HQFTs numerated by elements of
$\Irr (\Phi)$. Using  the technique of transfers, we show that only
the elements of $I_0\subset \Irr (\Phi)$ may contribute non-zero terms
to   $\vert {\mathcal S}(p,
s_\partial)\vert$. These terms are   the summands in (\ref{t2-}).



The  Frobenius-Mednykh formula       for the number of
homomorphisms      from $\pi_1(W) $       to a finite group was
rediscovered in the context of TQFTs by Dijkgraaf and
   Witten \cite{DW} and by  Freed and
     Quinn \cite{FQ}.   The present paper extends their work. It  is   plausible that Theorem \ref{t2} admits a purely
algebraic proof   generalizing the original methods of Frobenius and
Mednykh. The author plans to study an algebraic approach to
Theorem \ref{t2} elsewhere.



Throughout the paper, the symbol $K$ denotes an algebraically closed
field of characteristic zero.   The symbols $G$ and $G'$ denote
(discrete) groups.

My work on this paper was partially supported by the NSF grant
DMS-0707078.

\section {Representations and  cohomology classes}\label{section2}

We define  here (in a   general algebraic setting) the cohomology
classes $\zeta^{\rho,\gamma}$.

Throughout this section we fix a group epimorphism $q:G'\to G$
with kernel $\Gamma$.

\subsection{Representations of $\Gamma$.}\label{e21}
  By a representation
of $\Gamma$, we mean a group homomorphism $\rho:\Gamma \to GL_n (K)$
with $n\geq 1 $. Two  representations $\rho :\Gamma\to GL_{n} (K)$
and $\rho':\Gamma\to GL_{n'}(K)$ of $\Gamma$ are {\it equivalent}
  if $ n =n'$ and there is $M\in GL_{n }(K)$ such that  $\rho'(h) =M^{-1}\rho (h)  \,  M$
  for all $h\in \Gamma$. A  representation $\rho:\Gamma\to GL_n (K)$  is {\it irreducible}
if the induced action of $\Gamma$ on $K^n$ preserves no linear
subspace of $K^n$ except  $0$ and $K^n$. Let $\Irr (\Gamma)$ be the
set of equivalence classes of irreducible representations
of~$\Gamma$. Given    $a\in G'$ and an irreducible  representation
$\rho:\Gamma\to GL_n(K)$, the formula $h\mapsto  \rho ( a^{-1} h
  a ): \Gamma \to GL_n(K)$ defines an irreducible representation  of $\Gamma$
  denoted
$  a\rho $. This defines a  left  action of $G'$
  on~$\Irr (\Gamma)$. The  action of $\Gamma\subset G'$ is trivial  because
  $a\rho  =  \rho ( {a  })^{-1}\,  \rho  \, \rho
( {a })$ for any $a \in \Gamma$.
  We obtain thus a left action of    $G=G'/\Gamma$ on~$\Irr (\Gamma)$.
The   fixed point set of this action  is denoted $I_0(q)$.

\subsection{ The cohomology  class $\zeta^\rho$.}\label{e22}
For    $\rho\in I_0(q)$, we define  a cohomology class $\zeta^\rho
\in H^2(G ; K^*)$. Set $n=\dim\, \rho\geq 1$. For each $\alpha\in G
$, choose $\widetilde \alpha\in q^{-1} (\alpha)$  so that  $
\widetilde 1=1 $.  Since $\rho\in I_0(q)$,    there is a matrix
$M_\alpha\in GL_n(K)$ such that $  {\widetilde \alpha}  \rho  =
M_\alpha^{-1} \, \rho  \, M_\alpha$. By the Schur lemma,
  $M_\alpha$ is
  unique  up to multiplication by  elements of $K^*$. We call
  $M_\alpha$ the conjugating matrix (corresponding to $\widetilde \alpha$) and fix it
  for all $\alpha\neq 1$.
For $\alpha=1$,  set $M_\alpha=E_n$. For  $\alpha, \beta\in G $, set
$ L_{\alpha, \beta}= ({ \widetilde {\alpha\beta}})^{-1}\,
{\widetilde \alpha}\, {\widetilde \beta}\in  \Gamma  $.

\begin{lemma}\label{l1}  For any  $\alpha,
\beta\in G$, there is a unique  $\zeta_{\alpha, \beta} \in K^*$ such
that  \begin{equation}\label{eeq2.3}\zeta_{\alpha, \beta}
\,M_\alpha\,  M_\beta= M_{\alpha \beta} \,  \rho(L_{\alpha, \beta})
. \end{equation}  The family
$\{\zeta_{\alpha,\beta}\}_{\alpha,\beta}$ is a normalized 2-cocycle
on $G$. Its cohomology class  $\zeta^\rho\in H^2(G; K^*)$    depends
only
  on the equivalence
class of $\rho$ and does not depend on the choice of the matrices
$\{M_\alpha\}_{\alpha}$  and the lifts $\{\widetilde
\alpha\}_{\alpha }$.
\end{lemma}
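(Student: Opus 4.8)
\noindent The plan is to prove all three assertions by direct manipulation of the conjugating matrices, invoking the Schur lemma only once. Throughout I use the reformulation of the defining property of $M_\alpha$: the relation ${\widetilde\alpha}\rho=M_\alpha^{-1}\rho\,M_\alpha$ says $\rho(\widetilde\alpha^{-1}h\widetilde\alpha)=M_\alpha^{-1}\rho(h)M_\alpha$ for all $h\in\Gamma$, equivalently $\rho(h)M_\alpha=M_\alpha\,\rho(\widetilde\alpha^{-1}h\widetilde\alpha)$. First I would observe that both $M_\alpha M_\beta$ and $M_{\alpha\beta}\,\rho(L_{\alpha,\beta})$ conjugate $\rho$ into one and the same representation, namely $h\mapsto\rho\bigl((\widetilde\alpha\widetilde\beta)^{-1}h\,\widetilde\alpha\widetilde\beta\bigr)$: for $M_\alpha M_\beta$ this is two applications of the reformulated identity, and for $M_{\alpha\beta}\rho(L_{\alpha,\beta})$ it follows from the identity for $\widetilde{\alpha\beta}$ together with $\widetilde{\alpha\beta}\,L_{\alpha,\beta}=\widetilde\alpha\widetilde\beta$. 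Hence $M_{\alpha\beta}\rho(L_{\alpha,\beta})\,(M_\alpha M_\beta)^{-1}$ commutes with $\rho(\Gamma)$; since $K$ is algebraically closed and $\rho$ is irreducible, the Schur lemma forces it to be a scalar matrix $\zeta_{\alpha,\beta}\,E_n$, and $\zeta_{\alpha,\beta}\in K^*$ because all the matrices involved are invertible. Uniqueness of $\zeta_{\alpha,\beta}$ is immediate from invertibility of $M_\alpha M_\beta$. For normalization, note $L_{1,\beta}=L_{\alpha,1}=1$ and $M_1=E_n$, so specializing (\ref{eeq2.3}) to $\alpha=1$ and to $\beta=1$ gives $\zeta_{1,\beta}=\zeta_{\alpha,1}=1$.

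Next I would compute the triple product $M_\alpha M_\beta M_\gamma$ in the two bracketings $(M_\alpha M_\beta)M_\gamma$ and $M_\alpha(M_\beta M_\gamma)$. In each case one applies (\ref{eeq2.3}) twice to collapse adjacent pairs of $M$'s into scalar multiples of a single $M$ times a $\rho(L)$-factor, and uses $\rho(k)M_\gamma=M_\gamma\rho(\widetilde\gamma^{-1}k\widetilde\gamma)$ (for $k\in\Gamma$) to slide any $\rho(L)$-factor past a surviving $M$. The first bracketing yields $\zeta_{\alpha,\beta}^{-1}\zeta_{\alpha\beta,\gamma}^{-1}\,M_{\alpha\beta\gamma}\,\rho\bigl(L_{\alpha\beta,\gamma}\,\widetilde\gamma^{-1}L_{\alpha,\beta}\widetilde\gamma\bigr)$ and the second yields $\zeta_{\beta,\gamma}^{-1}\zeta_{\alpha,\beta\gamma}^{-1}\,M_{\alpha\beta\gamma}\,\rho\bigl(L_{\alpha,\beta\gamma}\,L_{\beta,\gamma}\bigr)$. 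Comparing the two, the cocycle identity $\zeta_{\alpha,\beta}\,\zeta_{\alpha\beta,\gamma}=\zeta_{\beta,\gamma}\,\zeta_{\alpha,\beta\gamma}$ follows once we verify the identity $L_{\alpha\beta,\gamma}\,\bigl(\widetilde\gamma^{-1}L_{\alpha,\beta}\widetilde\gamma\bigr)=L_{\alpha,\beta\gamma}\,L_{\beta,\gamma}$ in $\Gamma$; substituting $L_{x,y}=(\widetilde{xy})^{-1}\widetilde x\widetilde y$ one checks at once that both sides equal $(\widetilde{\alpha\beta\gamma})^{-1}\,\widetilde\alpha\,\widetilde\beta\,\widetilde\gamma$. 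This proves $\{\zeta_{\alpha,\beta}\}$ is a normalized $2$-cocycle on $G$.

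Finally I would treat three modifications of the data. (i) Replacing each $M_\alpha$ by $\mu_\alpha M_\alpha$ with $\mu_\alpha\in K^*$, $\mu_1=1$, changes $\zeta_{\alpha,\beta}$ into $\zeta_{\alpha,\beta}\,\mu_{\alpha\beta}\mu_\alpha^{-1}\mu_\beta^{-1}$, i.e.\ multiplies the cocycle by a coboundary, so $\zeta^\rho\in H^2(G;K^*)$ is unchanged. (ii) Replacing the lift $\widetilde\alpha$ by $\widetilde\alpha g_\alpha$ with $g_\alpha\in\Gamma$, $g_1=1$: the reformulated identity shows $M_\alpha\rho(g_\alpha)$ is a conjugating matrix for the new lift, and with this choice a short substitution (using $L'_{\alpha,\beta}=g_{\alpha\beta}^{-1}L_{\alpha,\beta}(\widetilde\beta^{-1}g_\alpha\widetilde\beta)g_\beta$, which follows from the definitions) gives $\zeta'_{\alpha,\beta}=\zeta_{\alpha,\beta}$ verbatim; any other admissible choice of the new conjugating matrices differs from this one by scalars and is covered by (i). (iii) Replacing $\rho$ by an equivalent $P^{-1}\rho P$: one may take $P^{-1}M_\alpha P$ for the new conjugating matrices, and then (\ref{eeq2.3}) again gives $\zeta'_{\alpha,\beta}=\zeta_{\alpha,\beta}$. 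Combining (i)--(iii), the class $\zeta^\rho$ depends only on the equivalence class of $\rho$.

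The one step requiring care is the cocycle computation: keeping track of the conjugations produced when sliding the $\rho(L)$-factors past the surviving $M$'s, and organizing the two bracketings so that the $\zeta$'s can be compared directly. Once the group identity $L_{\alpha\beta,\gamma}\,(\widetilde\gamma^{-1}L_{\alpha,\beta}\widetilde\gamma)=L_{\alpha,\beta\gamma}\,L_{\beta,\gamma}$ has been isolated, everything else is mechanical.
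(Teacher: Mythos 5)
Your proof is correct, and all the individual computations check out: the Schur-lemma argument for existence and uniqueness of $\zeta_{\alpha,\beta}$, the verification that $L_{\alpha\beta,\gamma}(\widetilde\gamma^{-1}L_{\alpha,\beta}\widetilde\gamma)=L_{\alpha,\beta\gamma}L_{\beta,\gamma}$ (both sides indeed collapse to $(\widetilde{\alpha\beta\gamma})^{-1}\widetilde\alpha\widetilde\beta\widetilde\gamma$), the coboundary analysis in (i), and the substitution with $L'_{\alpha,\beta}=g_{\alpha\beta}^{-1}L_{\alpha,\beta}(\widetilde\beta^{-1}g_\alpha\widetilde\beta)g_\beta$ in (ii), which is exactly the identity the paper uses. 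Where you differ from the paper is only in how the cocycle identity is established: the paper introduces the auxiliary map $\overline\rho:G'\to GL_n(K)$ defined by $\overline\rho(\widetilde\alpha g)=M_\alpha\rho(g)$ and proves the projective-representation property $\overline\rho(ab)=\zeta_{q(a),q(b)}\,\overline\rho(a)\,\overline\rho(b)$ (Formula (2.8) in the source), so that the cocycle identity drops out of associativity in $G'$; you instead expand the triple product $M_\alpha M_\beta M_\gamma$ in two bracketings and track the $\rho(L)$-factors by hand. These are algebraically the same computation — your $L$-identity is precisely the content that makes the paper's $\overline\rho$ projectively multiplicative — but the paper's route has the side benefit that $\overline\rho$ is introduced once and then re-used (the formula (2.8) reappears at the end of Section 9). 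Your version is more elementary and self-contained; the paper's is slightly more economical if one wants $\overline\rho$ anyway.
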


\begin{proof}
  For  $\alpha,
\beta\in G $ and $h\in \Gamma$, $$M_{\alpha \beta}^{-1} \,\rho (h)\,
M_{\alpha \beta} =  (\widetilde {\alpha\beta}  \rho) (h)=
\rho({\widetilde {\alpha\beta}}^{-1}\, h \, {\widetilde
{\alpha\beta}})=  \rho (L_{\alpha, \beta}\, {\widetilde \beta
}^{-1}\, {\widetilde  \alpha }^{-1}\, h \, {\widetilde \alpha }\,
{\widetilde  \beta }\, L_{\alpha, \beta}^{-1} )$$
$$=  \rho (L_{\alpha, \beta})
\,M_\beta^{-1} \, M_\alpha^{-1}  \,\rho (h)  \,M_\alpha \, M_\beta
\,  \rho (L_{\alpha, \beta})^{-1}.$$ Since  $\rho$ is
irreducible, there is a unique $\zeta_{\alpha, \beta} \in K^*$
satisfying (\ref{eeq2.3}).

  Any element of $G' $
expands uniquely as $    \widetilde \alpha g$ with  $\alpha \in G $
and $g \in \Gamma$. The formula $\overline \rho  ({  \widetilde
\alpha g})=  M_\alpha\, \rho (g)  $ defines a mapping $\overline
\rho : G'\to  GL_n(K)  $. Clearly,  $\overline \rho \vert_{ \Gamma
}= \rho $ and  $\overline \rho  ({\widetilde \alpha})=M_\alpha$ for
all $\alpha\in G$.  Also, for all $a\in
 G' $ and  $h\in \Gamma$,
\begin{equation}\label{eeq2.5} \overline \rho (a )\,   \rho  (h)=
\overline \rho ( ah)\, .
\end{equation}  To see this,  expand
  $a={\widetilde \alpha  g}$  with
$\alpha\in G$ and $g \in \Gamma$. We have
$$  \overline \rho ({  \widetilde \alpha g}) \,   \rho
({h})=  M_\alpha\, \rho (g)\,
\rho(h)=
M_\alpha\, \rho (g h)  =  \overline
\rho ( { \widetilde \alpha  g  h}
)
  \, .$$
Formula (\ref{eeq2.5}) implies that for any $\alpha, \beta \in G$,
\begin{equation}\label{eeq2.7}  \overline\rho  ({\widetilde \alpha} \, {\widetilde \beta})=
    \overline\rho ({\widetilde {\alpha \beta}}) \,  \rho (L_{\alpha, \beta})
= M_{\alpha \beta}\,  \rho (L_{\alpha, \beta})
=\zeta_{\alpha,\beta} \, M_\alpha\, M_\beta=\zeta_{\alpha,\beta} \,
\overline\rho  ({\widetilde \alpha})\,  \overline\rho  ({\widetilde \beta})\, .
\end{equation}
We claim that more generally, for all $a,b \in G'$,
\begin{equation}\label{eeq2.8}\overline \rho({a }\, b)
=\zeta_{q(a), q(b)}\, \, \overline \rho(a) \, \overline
\rho(b). \end{equation}
To see this, expand  $a=  \widetilde \alpha g$, $b= \widetilde \beta
h$ with $\alpha=q(a)$, $ \beta =q(b)\in G$ and $g,h\in \Gamma$.
Using
  (\ref{eeq2.5}) and  (\ref{eeq2.7}), we obtain
$$\overline \rho(a \, b)=
  \overline
\rho({\widetilde \alpha} \,{\widetilde \beta} \,{\widetilde
\beta}^{-1}\, g\, {\widetilde\beta}\,  h ) =
  \overline \rho({\widetilde \alpha}\,
{\widetilde \beta})\,   \rho  ({\widetilde \beta}^{-1}\, g\,
{\widetilde\beta}) \,   \rho  ( h ) $$ $$=  \zeta_{\alpha,\beta}
\, M_\alpha \, M_\beta \, (\widetilde \beta \rho) (g)\, \rho ( h)
 =\zeta_{\alpha,\beta} \,    M_\alpha\, \rho ( g)  \, M_\beta\,
\rho (h) =\zeta_{\alpha,\beta} \,\overline \rho(a) \, \overline
\rho(b)\, .$$

Formula (\ref{eeq2.8}) implies that
$\{\zeta_{\alpha,\beta}\}_{\alpha,\beta}$ is a 2-cocycle on $G$. It
is  normalized in the sense that $\zeta_{1,1}=1$. That the
cohomology class $\zeta^\rho$ of this cocyle does not depend on the
choice of $\{M_\alpha\}_{\alpha}$ follows directly from
(\ref{eeq2.3}). To prove the independence of $\zeta^\rho$  of  the
lifts $\{\widetilde \alpha\}_{\alpha
    }$, suppose that each
$\widetilde \alpha$ is traded for  $\widetilde \alpha'= \widetilde
\alpha\, g_\alpha$ with $g_\alpha\in \Gamma$.  Then  $M_\alpha$ is
traded for
  $M'_\alpha= M_\alpha\, \rho(g_\alpha)$.  For    $\alpha,
\beta\in G $,
$$L'_{\alpha, \beta}=(\widetilde
{\alpha\beta}')^{-1}\, {\widetilde \alpha'}\, {{{\widetilde
\beta}'}}  =        {g_{\alpha\beta}^{-1}}\, {\widetilde
{\alpha\beta}}^{-1} \,    {\widetilde \alpha}\,  {g_\alpha}\,
{\widetilde \beta} \,    {g_\beta}
=  {g_{\alpha\beta}^{-1}}\, L_{
\alpha, \beta} \, ({\widetilde \beta}^{-1}\, {g_{\alpha }} \,
{\widetilde \beta}) \, {g_{ \beta}}  \, . $$
  Substituting these
new  values of $M,L$ in (\ref{eeq2.3}) and using that $ \rho
({\widetilde \beta}^{-1}\, {g_{\alpha }} \, {\widetilde \beta})=
M_\beta^{-1}  \rho (  g_{\alpha } ) M_\beta$, we obtain that the
lifts $\widetilde \alpha'$ lead  to the same cocycle $
\{\zeta_{\alpha,\beta}\} $. When
  $\rho $ is conjugated by  $M\in GL_n(K)$,  the
matrices $\{M_\alpha\}_\alpha$ are replaced by $\{M^{-1} M_\alpha
M\}_\alpha$ and  the cocycle $\{\zeta_{\alpha, \beta}\}_{\alpha,
\beta}$ is  also preserved.
\end{proof}

\subsection{The function $t_\rho$ and
the cohomology class    $ \zeta^{\rho,  \gamma}$.}\label{e23-9} A
representation
  $\rho\in I_0(q)$ determines a function $t_\rho: G'\to \{0,1\}$ as follows.  For any $a\in
G'$,  we have $a\rho=M^{-1}\rho M$ with    $M\in GL_n(K)$, where
$n=\dim\, \rho$. Since $M$ is unique up to multiplication by
elements of $K^*$, so is $\Tr\,M\in K$. We say that $a $ is {\it
$\rho$-adequate} if $\Tr \, M\neq 0$.  Let $t_\rho: G'\to \{0,1\}$
be the function
  sending all $\rho$-adequate elements of $G'$ to 1 and all other elements of $G'$ to $0$.
      The set of
$\rho$-adequate elements of $G'$ and the function $t_\rho$  depend
only on the equivalence class of~$\rho$.  A subset of $G'$ is  {\it
$\rho$-adequate} if all its elements are $\rho$-adequate.

We say that a set $\gamma\subset G'$ is    {\it $q$-free} if
$\gamma$ freely generates a (free) subgroup $\langle  \gamma
\rangle$ of $G'$ and the restriction of $q$ to $\langle  \gamma
\rangle$ is injective. We derive
  from a $q$-free set $\gamma$ and any
  $\rho\in I_0(q)$
  a  cohomology class    $ \zeta^{\rho,  \gamma} \in
H^2(G, H ; K^*)$, where $H= q(\langle \gamma  \rangle)=\langle
q(\gamma) \rangle \subset G$.  If $\gamma$ is not $\rho$-adequate,
set $\zeta^{\rho, \gamma}=0 $. Suppose that $ \gamma$ is
$\rho$-adequate.   Then for any   $a\in \gamma$, there is a unique
matrix $\nu(a) \in GL_n(K)$ such that $a\rho=\nu(a)^{-1}\rho \nu(a)$
and $\Tr \, \nu(a)=1$.  The mapping $\gamma\to GL_n(K)$, $a\mapsto
\nu(a)$ induces a group  homomorphism  $ \langle \gamma \rangle \to
GL_{n}(K)$ denoted $\nu$.   We define a normalized 2-cocycle
$\{\zeta_{\alpha,\beta}\}_{\alpha,\beta}$  on $G$  as in Section
\ref{e22} using the following choices: for all $\alpha\in H$, let
 $\widetilde \alpha $ be the only element of $q^{-1}(\alpha) \cap \langle \gamma \rangle$   and   $M_\alpha= \nu
 (\widetilde \alpha )$  (the lifts of elements of $G-H$
to $G'$ and the associated matrices $M$ are chosen in an arbitrary
way). It follows from the definitions  that $\zeta_{\alpha,\beta}=
1$ for all $\alpha, \beta\in H$. The proof of Lemma \ref{l1} shows
that the cocycle $\{\zeta_{\alpha,\beta}\}_{\alpha,\beta}$ is well
defined up to
  coboundaries of 1-cochains $G\to K^*$ sending~$H$
to~$1$.
  The cohomology class  $ \zeta^{\rho,  \gamma} \in
H^2(G, H ; K^*)$  of this cocycle  depends only on  the equivalence
class of~$\rho$. The natural homomorphism $H^2(G, H ; K^*) \to H^2(G
; K^*)$ carries  $\zeta^{\rho, \gamma} $ to $\zeta^\rho$ if~$\gamma$
is $\rho$-adequate and to $0$ otherwise. For example, the empty
subset of $G'$
  is $q$-free and $\zeta^{\rho, \emptyset}=\zeta^{\rho } \in
H^2(G, \{1\} ; K^*) = H^2(G  ; K^*)$.

\subsection{Theorems \ref{t1} and \ref{t2}  re-examined.}\label{e24}
We  can   now explain all notation  in  Theorems \ref{t1} and
\ref{t2}. Applying Section \ref{e21}  to \begin{equation}\label{idi}
G'=\pi',\,\,\, G=\pi,\,\,\, q=p_\#:\pi'\to \pi, \,\,\,\Gamma=\Phi\,
,
\end{equation} we obtain an action of $\pi$ on $\Irr (\Phi)$. Every
$\rho$ in the fixed point set of this action $I_0 =I_0(p_\#)$ yields
a function $t_\rho:\pi'\to \{0,1\}$ by Section \ref{e23-9}. The set
$\gamma=\{\gamma_k \}_{k=1}^m\subset \pi'$ defined before the
statement of Theorem \ref{t1} is  $q$-free because
   $p_\#(\gamma)$ generates a free subgroup    of $\pi$ of rank
   $m=\vert \gamma\vert$. Here it is essential that
the genus of $W$ is positive. Therefore each $\rho\in I_0 $
determines $\zeta^{\rho, \gamma} \in H^2(\pi, \langle
q(\gamma)\rangle ;K^*) =H^2(W,
\partial W;K^*)$. This completes the statement of Theorems \ref{t1} and
\ref{t2}.

\section {Homotopy Quantum Field Theory}\label{section3}

Throughout this section we fix an integer $d\geq 0$ and a
 connected CW-space $X$ with base point $x$.

\subsection  {Preliminaries.}\label{ee3.1} By a
  manifold, we mean    an oriented  topological manifold.
  A manifold  $M$ is
  {\it pointed}  if   every    component  of $M$    is provided with
a base point.  The set of base points of $M$ is denoted $M_\bullet$.

A {\it $d$-dimensional $X$-manifold} is a pair (a pointed closed
$d$-dimensional  manifold $M$, a  map $g :M\to X$  such that
$g(M_\bullet)=x$).    A disjoint union of $X$-manifolds is an
$X$-manifold in the obvious way. An {\it $X$-homeomorphism  of
$X$-manifolds} $ (M,g)\to (M',g')$ is an orientation preserving
homeomorphism $f:M\to M'$ such that $g =g'f$ and
$g(M_\bullet)=M'_\bullet$. An empty set  is viewed as a pointed
manifold and an $X$-manifold of any given dimension.

By a  {\it $(d+1)$-dimensional cobordism}, we  mean  a triple
$(W,M_0,M_1)$ where $W$ is a compact $(d+1)$-dimensional  manifold
and  $M_0,M_1$ are disjoint pointed
 closed $d$-dimensional  submanifolds of $\partial W$ such that $\partial W=(-M_0)\amalg
 M_1$. The manifold $W$   is not supposed to be
  pointed. As usual, $-M$ is $M$ with reversed orientation.


An    {\it $X$-cobordism}  is a cobordism $(W,M_0,M_1)$ endowed with
a map  $g:W\to X$  carrying   the base points of  $M_0, M_1$ to
  $x$. Both
  bases  $M_0$ and  $M_1$ of $W$ are
considered as
  $X$-manifolds with    maps  to $X$ obtained by restricting
$g$. An {\it $X$-homeomorphism of $X$-cobordisms}
\begin{equation}\label{ee3.1.1} F:(W,M_0,M_1,g)\to
(W',M'_0,M'_1,g')\end{equation}  is an orientation preserving and
base point preserving homeomorphism of triples $(W,M_0,M_1)\to
(W',M'_0,M'_1)$ such that $g =g'F $. The standard operations on
cobordisms (disjoint union, gluing along bases, etc.) apply in  this
setting in the obvious way. For brevity, we shall often  omit  the
maps from the notation for $X$-manifolds and $X$-cobordisms.

\subsection  {Axioms of HQFTs}\label{ee3.2}  We  adapt  Atiyah's
axioms of a TQFT  to the present setting.  A {\it
$(d+1)$-dimensional  Homotopy Quantum Field Theory
  with target $X$} or, shorter, a {\it   $(d+1)$-dimensional  $X$-HQFT} assigns a finite-dimensional vector space
$A_M$ over $K$ to any $d$-dimensional $X$-manifold $M$, an
 isomorphism $f_{\#}:A_M\to A_{M'}$ to any $X$-homeomorphism of
$d$-dimensional $X$-manifolds $f:M\to M'$, and a   homomorphism
$\tau(W): A_{M_0}\to A_{M_1} $ to any $(d+1)$-dimensional
$X$-cobordism    $(W,M_0,M_1)$. The following seven axioms should be
met.

(1) For any $X$-homeomorphisms of $d$-dimensional $X$-manifolds
$f:M\to M', f':M'\to M''$, we have $(f'f)_{\#}=f'_{\#}f_{\#}$.

 (2) For any $X$-homeomorphism  (\ref{ee3.1.1}), the following
 diagram is commutative:
 $$
 \CD
 A_{(M_0,g\vert_{M_0})}  @>(F\vert_{M_0})_{\#} >> A_{(M'_0,g'\vert_{M'_0} )}  \\
   @V\tau (W,\, g) VV    @VV\tau (W',\, g')V        \\
   A_{(M_1,g\vert_{M_1})}  @>(F\vert_{M_1})_{\#} >> A_{(M'_1,g'\vert_{M'_1} )}\, .
 \endCD  $$

      (3) If an $X$-cobordism    $(W,M_0,M_1)$  is obtained from two
      $(d+1)$-dimensional $X$-cobordisms $(W_0,M_0,N)$ and $(W_1,N',M_1)$
      by gluing along an $X$-homeomorphism $f:N\to N'$, then
      $$\tau(W)= \tau(W_1)\circ f_{\#} \circ \tau(W_0):A_{M_0}\to A_{M_1}.$$

(4) $  A_{\emptyset}=K$ and for any disjoint $d$-dimensional
$X$-manifolds $M,N$, there is a natural  isomorphism $A_{M\amalg
N}=A_M\otimes A_N$ (for a detailed formulation of    naturality, see
\cite{Tu1}, p.\ 121). Here and below $\otimes=\otimes_K  $.

(5) If a $(d+1)$-dimensional $X$-cobordism $W$    is a disjoint
union of  $X$-cobordisms $W_1,W_2$, then $\tau(W)=\tau(W_1) \otimes
\tau(W_2)$.

(6) For any  $d$-dimensional $X$-manifold $(M,g:M\to X )$,
  $$\tau(M\times [0,1], M\times \{0\}, M\times \{1\},
g\circ \pr :M\times [0,1] \to  X)=\id: {A_M}\to {A_M}\, ,$$
  where $\pr:M\times [0,1] \to M$ is the
  projection.  Here   we identify $A_{M\times \{t\}}
=A_M$  for $t=0,1$ via the canonical homeomorphism  $ {M\times
\{t\}}\approx M $.

(7) For any  $(d+1)$-dimensional $X$-cobordism $(W, g:W\to X) $, the
homomorphism $\tau(W)$ is preserved under  homotopies of  $g$
constant on $\partial W$.

   If  $X=\{x\}$,  then all references to maps to
  $X$     are redundant, and $X$-HQFTs are the familiar TQFTs.  Note a few
   properties of HQFTs generalizing the standard properties of TQFTs,  see \cite{Tu1}. Given a
    $(d+1)$-dimensional $X$-HQFT $(A, \tau)$, we associate with
  any  $d$-dimensional
$X$-manifold $M $    the bilinear form
$$\eta_M=\tau(M\times [0,1],M\times \{0\} \cup (-M)\times
\{1\},\emptyset) :  A_M\otimes A_{-M}\to K\, .$$ This form is   nondegenerate,
natural with respect to $X$-homeomorphisms, multiplicative with
respect to disjoint union,
  and
symmetric in the sense that $\eta_{-M}=\eta_M\circ   {\sigma}$ where
$\sigma$ is the standard flip $ A_{-M}\otimes A_{M} \to A_M\otimes
A_{-M}$. By definition, $-\emptyset=\emptyset$ and
$\eta_\emptyset:K\otimes K\to K$ is the multiplication in $K$.

For a compact $(d+1)$-dimensional manifold $W$ with pointed boundary
and a map  $(W, (\partial W)_\bullet) \to (X,x)$, the HQFT $(A,
\tau)$ yields the   vector space $A_{\partial W}$ and two
homomorphisms $\tau(W,\emptyset,\partial W) :K=A_\emptyset\to
A_{\partial W}$ and   $\tau(-W,\partial W, \emptyset) :A_{\partial
W}\to A_\emptyset=K$. Let $\tau(W)\in A_{\partial W}$ be the  image
of the unity $1_K$ under the first   homomorphism. The second
homomorphism   is nothing but the image of
 $\tau(-W)  $ under the isomorphism
$ A_{-\partial W} \approx \Hom_K(A_{\partial W}, K) $ induced by
$\eta_{\partial W}$. The vector $\tau(W)$ is invariant under
homotopy of the given map $W\to X$,   natural with respect to
$X$-homeomorphisms, and multiplicative under disjoint union.

 Two $(d+1)$-dimensional $X$-HQFTs $(A,\tau)$ and $
(A',\tau')$   are isomorphic if there is
    a family of $K$-isomorphisms
$\{\rho_M:A_M\to A'_M\}_M$, where $M$ runs over all  $d$-dimensional
$X$-manifolds, such that:  $\rho_{\emptyset}=\id_K $  and
$\rho_{M\amalg  N} =\rho_M\otimes \rho_N$ for any
  disjoint  $X$-manifolds $M,N$;    the natural square
diagrams associated with $X$-homeomorphisms    and  $X$-cobordisms
are commutative.


\subsection  {Example}\label{ee3.3}
Each $(d+1)$-dimensional $K^*$-valued singular cocycle $\theta$ on
$X$ gives rise to a
  $(d+1)$-dimensional $X$-HQFT $(A,\tau)$. Let
$M=(M,g:M\to X)$ be a $d$-dimensional $X$-manifold.
  Then  $A_M$ is
a one-dimensional $K$-vector space defined as follows. A
$d$-dimensional singular cycle $a\in  C_d(M
 ;\ZZ)$ is a {\it fundamental cycle of~$M$} if  it
represents the sum of the fundamental homology classes of the
components of $M$. Every such  $a $ determines a generating vector
$\langle a\rangle\in A_M=K \langle a\rangle$. If $a,b$ are two
fundamental cycles of~$M$, then $a-b=\partial c$ for
  $c\in C_{d+1}(M; \ZZ)$. We impose the equality $
\langle b\rangle=g^*(\theta) (c) \langle a\rangle$, where
$g^*(\theta)$ is the singular
  cocycle  on $M$ obtained by pushing
back $\theta$ along~$g $.  It is easy to check that $g^*(\theta)
(c)$ does not depend on the choice of $c$ and that $A_M$ is a
well-defined one-dimensional  vector space. If $M=\emptyset$, then
$a=0$ and
 by definition $A_M=K$ and $\langle a\rangle=1_K\in K$. An $X$-homeomorphism
of   $X$-manifolds $f:M\to M'$ induces an isomorphism $f_{\#}:
A_M\to A_{M'}$  by $f_{\#}(\langle a\rangle)= \langle f_*(a) \rangle
$ for any fundamental cycle $a
 $ of $M$.

Given a $(d+1)$-dimensional  $X$-cobordism    $(W,M_0,M_1, g:W\to
X)$, pick a singular chain $B\in C_{d+1}(W ;\ZZ) $ such that
$\partial B=b_1-b_0$, where $b_0,b_1$ are fundamental cycles of
$M_0, M_1$, respectively. We also require   $B$ to be fundamental in
the sense that its   image    in $C_{d+1}(W, \partial W;\ZZ)$ is a
fundamental cycle of $W$. We define $\tau(W): A_{M_0}\to A_{M_1} $
by
 $\tau(W) (\langle b_0\rangle)= g^*(\theta) (B)  \, \langle
b_1\rangle$. It is an exercise in singular homology to show  that
$\tau(W)$ is well defined and     the axioms of an
  HQFT are met, cf.\  \cite{Tu2}. This HQFT is
denoted $(A^{\theta}, \tau^{\theta})$. Its  isomorphism class
  depends only on the cohomology class of $\theta$.

For  $\partial W=\emptyset$, we have $\tau(W,g)=g^*(\theta)
([W])=\theta(g_*([W]))$. We can  view   $(A^{\theta},
\tau^{\theta})$ as a device  extending to cobordisms the
  evaluation of
$\theta$ on   cycles in $X$. Another such device (in the smooth
category) is provided by the  Cheeger-Simons differential
characters. For a comparison, see \cite{Turn}.

\subsection  {Operations on HQFTs}\label{ee3.3+}
We  need three operations on   HQFTs:    direct sum, rescaling, and
transfer.  The direct sum of  $(d+1)$-dimensional $X$-HQFTs
$(A^1,\tau^1) $, $ (A^2,\tau^2)$    is defined as follows. Set
$(A^1\oplus A^2)_M=A^1_M\oplus A^2_M$ for any connected
$d$-dimensional $X$-manifold  $M$ and  extend this to nonconnected
$M$ via Axiom (4) above. The action of $X$-homeomorphisms is defined
by applying $\oplus$ and $\otimes$ to the actions provided by
$(A^1,\tau^1)$, $ (A^2,\tau^2)$. For a  $d$-dimensional $X$-manifold
$M$ and $k=1,2$, we have a natural embedding $i^k_M:A^k_M\to
(A^1\oplus A^2)_M$  and a natural projection $p^k_M:(A^1\oplus
A^2)_M \to A^k_M, $  such that $p^k_M i^k_M=\id$. For a  connected
$(d+1)$-dimensional $X$-cobordism $(W,M,N)$, set $$(\tau^1\oplus
\tau^2)(W)=i^1_N\tau^1(W)p^1_M + i^2_N \tau^2(W)p^2_M: (A^1\oplus
A^2)_M \to (A^1\oplus A^2)_N\, . $$ This extends to nonconnected
$X$-cobordisms  via Axiom (5) and gives a $(d+1)$-dimensional
$X$-HQFT $(A^1\oplus A^2, \tau^1\oplus \tau^2)$.

One way to rescale a $(d+1)$-dimensional $X$-HQFT consists in
multiplying  the homomorphism   associated with each $X$-cobordism $
(W,M,N)$   by $k^{\chi (W) -\chi (M)}  $ with   fixed $k\in K^*$.
For $d=1$, we shall use a slightly subtler version of this
transformation. Given
  $k\in K^*$, the {\it $k$-rescaling} transforms
  a 2-dimensional    $X$-HQFT    into  the same HQFT  except that
  the homomorphism
associated with each  $X$-cobordism $ (W,M,N)$ is multiplied by
$k^{(\chi (W) + b_0(M)- b_0(N))/2} $. Here $b_0(M)=\dim H_0(M;\RR)$
and we use the  inclusion $\chi (W) + b_0(M)- b_0(N)\in 2\ZZ$.

The transfer  of HQFTs  is defined in the following setting. Let
  $p:\widetilde X\to X$ be a      finite-sheeted (unramified) covering. Consider the path-connected pointed
space $Y=\widetilde X/p^{-1}(x)$ and let $q:\widetilde X\to Y$ be
the projection.  The transfer derives from a $(d+1)$-dimensional
HQFT $(A,\tau)$   with target $Y$
   a $(d+1)$-dimensional  HQFT $(\widetilde A, \widetilde
\tau)$ with target $X$  as follows. For  a $d$-dimensional
$X$-manifold $(M,g:M\to X)$, consider all lifts of $g$ to
$\widetilde X$, i.e., all maps $\widetilde  g:M\to \widetilde X$
such that $p \widetilde  g=g$. The set of such $\widetilde  g$ is
finite (possibly, empty). Each pair $(M, q\widetilde  g)$ is an
$Y$-manifold. Set
$$\widetilde  A_M= \oplus_{\widetilde  g, \,p \widetilde  g=g} \,\,A_{(M, q\widetilde
g)}.$$ Given an  $X$-homeomorphism of  $X$-manifolds
$f:(M,g)\to (M',g')$, a  lift $\widetilde  g:M\to \widetilde X$ of
$g$ induces a lift $ \widetilde  g f^{-1}:M'\to \widetilde X$ of
$g'$. The HQFT $(A, \tau)$ provides an isomorphism $f_{\#}: A_{(M,
q\widetilde g)}\to A_{(M' , q\widetilde g f^{-1})}$. The direct sum
of the latter over all $\widetilde  g$ is the  isomorphism $ f_{\#}:
\widetilde A_{(M, g)}\to \widetilde A_{(M', g')}$ determined by
$(\widetilde A, \widetilde \tau)$.

Given a $(d+1)$-dimensional    $X$-cobordism    $(W,M_0,M_1, g:W\to
X)$, there is a finite set  of maps $\widetilde g:W\to \widetilde X$
such that $p \widetilde  g=g$. Each such  $\widetilde  g$
restricted to $M_0,M_1$ yields certain lifts, $ \widetilde g_0,
\widetilde  g_1$, of the maps $ g_0= g \vert_{M_0}:{M_0}\to X$ and $
g_1= g \vert_{M_1}:M_1\to X$, respectively. We define $\widetilde
\tau(W,g): \widetilde A_{M_0}\to \widetilde A_{M_1} $ to be the sum
over all $\widetilde g$ of the  homomorphisms $\tau(W,q\widetilde
g): A_{(M_0, q\widetilde  g_0)}\to A_{(M_1, q\widetilde g_1)} $.
Then $(\widetilde A, \widetilde \tau)$ is    a $(d+1)$-dimensional
$X$-HQFT.

For example, for $d\geq 1$,    any $\theta\in H^{d+1}(\widetilde
X;K^*)= H^{d+1}(Y;K^*) $ determines  a  $Y$-HQFT
  $(A^\theta, \tau^\theta)$. Its transfer  $(\widetilde
A^\theta, \widetilde \tau^\theta)$  is an $X$-HQFT. For a
$d$-dimensional $X$-manifold $(M, g:M\to X)$, the dimension of $
\widetilde  A^{ \theta}_M$ is  equal to the number of lifts of~$g$
to $\widetilde X$. For  a  closed  $(d+1)$-dimensional $X$-manifold
$(W, g:W\to X)$, we have $\widetilde  \tau^{  \theta}(W,g)=
\sum_{\widetilde g } \widetilde  g^*(\theta) ([W])$, where
$\widetilde g$ runs over all lifts of $g$ to $\widetilde X$.

\subsection  {Aspherical targets}\label{ee3.4+}
 A $(d+1)$-dimensional HQFT
$(A,\tau)$ with aspherical target $X$ may be reformulated in terms
of homotopy classes of maps to $X$ rather than maps themselves. By
homotopy  we mean homotopy in the class of maps sending the base
points   to $x\in X$.  Here is the key observation: for any pointed
closed $d$-dimensional  manifold $M$ and   maps $g_0, g_1:M\to X$
related by a homotopy $H:(M\times [0,1], M_\bullet\times [0,1]) \to
(X,x)$, the pair $(M\times [0,1], H)$ is an $X$-cobordism, and the
associated homomorphism $\tau (M\times [0,1], H):A_{M, g_0}\to
A_{M,g_1}$  does not depend on the choice of $H$.  This    follows
from Axiom (7) of an HQFT since any two such $H$ are homotopic,
which is a consequence of the asphericity of~$X$. Composing $H$ with
the inverse homotopy and using Axiom (6), we obtain  that  $\tau
(M\times [0,1], H)$ is an isomorphism. We identify the vector spaces
$A_{M, g}$ (with $g:M\to X$ in the given homotopy class)   along
these isomorphisms. The resulting vector space depends only on the
homotopy class  and has the same dimension as $A_{M, g}$. The action
of $X$-homeomorphisms and of $X$-cobordisms is compatible with these
identifications.   As a consequence, dealing with  HQFTs with
aspherical target $X$, we can safely replace maps of manifolds and
cobordisms to $X$ by  pointed  homotopy classes of maps.

From now on, we will consider only $X$-HQFTs with aspherical  $X$
and will modify the notions of $X$-manifolds and $X$-cobordisms
accordingly (i.e., view the maps to $X$ up to pointed homotopy). By
$X$-homeomorphisms
  we  will mean  orientation preserving and base point
 preserving homeomorphisms   commuting with the given homotopy classes of maps to $X$.

 \begin{lemma}\label{l1908}  If $f:M\to N$ is   an $X$-homeomorphism   of $d$-dimensional
   $X$-manifolds and if a homeomorphism $f': (M,M_\bullet)\to
(N, N_\bullet)$ is   isotopic to $f$ in the class of homeomorphisms
$(M,M_\bullet)\to (N, N_\bullet)$, then  $f'$ is an
$X$-homeomorphism and $f'_\#=f_\#:A_M\to A_N$ for any
$(d+1)$-dimensional $X$-HQFT $(A,\tau)$.
\end{lemma}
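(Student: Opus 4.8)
The plan is to reduce the statement to Axiom (7) of an HQFT, exactly as the preceding discussion of aspherical targets does, by interpreting the isotopy from $f'$ to $f$ as a homotopy between two maps into $X$. First I would observe that the hypothesis gives an isotopy $\{f_t\}_{t\in[0,1]}$ through homeomorphisms $(M,M_\bullet)\to(N,N_\bullet)$ with $f_0=f'$ and $f_1=f$. Since $f$ is an $X$-homeomorphism, it commutes (up to pointed homotopy) with the given homotopy classes of maps $g:M\to X$ and $g':N\to X$; that is, $g'\circ f$ is pointed-homotopic to $g$. Then $g'\circ f'=g'\circ f_0$ is pointed-homotopic to $g'\circ f_1=g'\circ f$ via the homotopy $t\mapsto g'\circ f_t$ (which sends base points to $x$ because each $f_t$ preserves base points and $g'$ sends $N_\bullet$ to $x$). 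Composing, $g'\circ f'$ is pointed-homotopic to $g$, which is precisely the statement that $f'$ is an $X$-homeomorphism in the sense fixed at the end of Section \ref{ee3.4+}.

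Next I would prove $f'_\#=f_\#$. The clean way is to build an $X$-cobordism realizing the isotopy. Consider $W=M\times[0,1]$ viewed as a cobordism from $M\times\{0\}\approx M$ to $M\times\{1\}\approx M$, equipped with the map $G:W\to X$, $G(x,t)=g(x)$ (constant in $t$). By Axiom (6), $\tau(W,G)=\id:A_M\to A_M$. Now I would instead use the same underlying manifold $M\times[0,1]$ but glued into the composition via the homeomorphism $F:M\times[0,1]\to N\times[0,1]$, $F(x,t)=(f_t(x),t)$, which is an orientation- and base-point-preserving homeomorphism of cobordisms. By naturality (Axiom (2)), $\tau$ applied to $N\times[0,1]$ with the pushed-forward map $G\circ F^{-1}$ intertwines $\tau(M\times[0,1],G)=\id$ with the two boundary isomorphisms $(F|_{M\times\{0\}})_\#=f'_\#$ and $(F|_{M\times\{1\}})_\#=f_\#$. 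Concretely, the commutative square of Axiom (2) reads $(f_\#)\circ\id=\tau(N\times[0,1],G\circ F^{-1})\circ(f'_\#)$, so it suffices to show $\tau(N\times[0,1],G\circ F^{-1})=\id:A_N\to A_N$. But $G\circ F^{-1}$ is a map $N\times[0,1]\to X$ whose restrictions to the two ends are pointed-homotopic (both equal $g'$ up to the identifications, using that $g'\circ f'\simeq g'\circ f\simeq g$), and it agrees on $\partial(N\times[0,1])$ with the product map $g'\circ\pr$ after the identifications we have fixed on the $A$-spaces in Section \ref{ee3.4+}; by Axiom (7) (homotopy invariance rel boundary) together with Axiom (6) it therefore equals $\id$. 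Hence $f_\#=f'_\#$.

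The one genuine subtlety — the step I expect to be the main obstacle — is checking that $G\circ F^{-1}$ is homotopic rel $\partial$ to the product map $g'\circ\pr:N\times[0,1]\to X$, so that Axioms (6) and (7) apply. This is where asphericity of $X$ is used: the two maps agree on $N\times\{0,1\}$ after our fixed identifications, and any two homotopies rel endpoints between a fixed pair of maps into an aspherical space are themselves homotopic, so the "difference" of $G\circ F^{-1}$ and $g'\circ\pr$ is null-homotopic rel $\partial(N\times[0,1])$. I would spell this out by noting that $G\circ F^{-1}(y,t)=g(f_t^{-1}(y))$ traces, for fixed $y$, the path $t\mapsto g(f_t^{-1}(y))$ in $X$, while $g'\circ\pr$ traces the constant path at $g'(y)$; the endpoints match because $f_0=f'$, $f_1=f$ and we have identified $A_N$ along the chosen homotopies; asphericity then kills the discrepancy. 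Once this is in place, everything else is a formal application of the HQFT axioms. Alternatively, and perhaps more economically, one can avoid Axiom (2) entirely: directly apply Axiom (7) to the cobordism $(M\times[0,1],M\times\{0\},M\times\{1\})$ with the two maps $(x,t)\mapsto g(x)$ and a map realizing the homotopy $g'\circ f_t$ transported back to $M$, concluding $f'_\#=f_\#$ after identifying the source and target copies of $A_M$ via the canonical homeomorphisms as in Axiom (6).
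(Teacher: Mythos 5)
Your proof is correct and follows essentially the same route the paper has in mind: the first claim is checked from the definitions via the homotopy $t\mapsto g'\circ f_t$, and the second is reduced, via Axiom (2) applied to the homeomorphism $F(x,t)=(f_t(x),t)$ of product cobordisms together with Axiom (6), to the observation that a cylinder whose map restricts on both ends to representatives of the same homotopy class induces the identification isomorphism of Section \ref{ee3.4+}. Your explicit invocation of Axiom (7) and asphericity is simply unpacking what the paper has already packaged into those identifications, so there is no genuine divergence in method.
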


The first claim of this lemma follows from the definitions. The
second claim follows from Axioms (2) and (6) of an HQFT.

An aspherical space $X$  with base point $x$  is an
Eilenberg-MacLane space of type $K(G,1)$ for   $G=\pi_1(X,x)$.   By
Section \ref{ee3.3}, every $\theta\in H^{d+1}(G;K^*)=H^{d+1}(X;K^*)$
gives rise to a $(d+1)$-dimensional $X$-HQFT  $(A^{\theta},
\tau^{\theta})$.  More generally, consider a subgroup $H\subset
\orlin$ of finite index   and the associated covering $p:\widetilde
X\to X$. Pick
  $\widetilde x\in p^{-1}(x)$ and identify $\pi_1(\widetilde X,\widetilde x)=H$
via $p_\#$.   Any $\theta\in H^{d+1}(H;K^*)=H^{d+1}(\widetilde
X;K^*)$ with $d\geq 1$ yields a    $(d+1)$-dimensional $X$-HQFT
through
  transfer.  For $d=1$ and    $k\in K^*$, we can   $k$-rescale this HQFT. The resulting $X$-HQFT
is denoted    $(A^{G, H,
  \theta} , \tau^{G, H, \theta, k} ) $. It can also be obtained by
  first $k$-rescaling the  HQFT $(A^{\theta},
\tau^{\theta})$ and then transferring to $X$.


\section  {Biangular $\orlin$-algebras and 2-dimensional HQFTs}

For  a group $G$, we introduce   a class  of $G$-graded algebras
  giving rise to   2-dimensional HQFTs with target $X=K(G,1)$.
 This     generalizes the   construction of
  2-dimensional TQFTs ($\orlin=1$) from associative algebras in    \cite{BP}  and  \cite{FHK}.

\subsection  {Biangular $\orlin$-algebras}\label{BGa}
A {\it $\orlin$-graded algebra}  or, briefly, a {\it
$\orlin$-algebra}  is an associative algebra $B$ over $K$ endowed
with a splitting $B=\bigoplus_{\alpha\in \orlin} B_{\alpha}$  such
that all $\rup_\alpha$ are finite-dimensional, $B_{\alpha} B_{\beta}
\subset B_{\alpha \beta}$ for any $\alpha, \beta\in \orlin$, and $B$
has a (right and left) unit $1_B\in B_1$ where $1$ is the neutral
element of $\orlin$. An {\it inner product}  on a $\orlin$-algebra
$B$ is  a symmetric $K$-bilinear  form $\eta : B \otimes B \to K $
such that

(1)  for all $\alpha \in \orlin$, the restriction of $\eta$ on
$B_\alpha\otimes B_{\alpha^{-1}}$ is nondegenerate;

(2) $\eta(B_{\alpha}\otimes B_{\beta})=0$ if $\alpha \beta\neq 1$,

(3) $\eta(a,b)=\eta(ab, 1_B)$ for any $a,b\in B$.

For $c\in B$, the symbol $\mu_c$ denotes  the homomorphism $B\to B,
x\mapsto cx$. A $\orlin$-algebra $\rup=\oplus_{\alpha\in \orlin}
\rup_\alpha$ is {\it biangular}  if it  has an inner product $\eta :
B \otimes B \to K $ such that for any $\alpha, \beta\in G$ and $a\in
\rup_\alpha, b\in \rup_{\alpha^{-1}}$,
\begin{equation}\label{innn} \eta  (a,b) =
  \Tr(\mu_{ab}\vert_{\rup_\beta}:\rup_\beta\to
\rup_\beta)\, .
\end{equation}
The form $\eta $ (if it exists) is unique and  denoted  $\eta_B$.
The conditions above imply that for any $c\in B_1$, the trace of
$\mu_c:B_\beta\to B_\beta$ does not depend on $\beta$ and is equal
to $\eta_B(c,1_B)=\eta_B(1_B,c)$. In particular, $\dim
(\rup_\beta)=\dim
  (\rup_1)=\eta_B(1_{ \rup}, 1_{ \rup})$ for all $\beta\in G$. Note
that Conditions (2) and (3) for $\eta_B$ and the symmetry of
$\eta_B$ may be deduced from (\ref{innn}).

Given a biangular $G$-algebra $B$ and $\alpha\in G$, we can view the
vector spaces $B_{\alpha}$ and $ B_{\alpha^{-1}}$ as dual to each
other via $\eta_B$.
 Pick a
basis $\{p_i^\alpha \}_{i }$ of $B_\alpha$, consider the dual basis
$\{q_i^\alpha \}_{i }$ of $B_{\alpha^{-1}}$, and set
$b_\alpha=\sum_i p_i^\alpha \otimes q_i^\alpha  \in B_\alpha\otimes
B_{\alpha^{-1}}$.  The vector $b_\alpha$  does not depend on the
choice of  $\{p_i^\alpha \}_{i }$. The system of vectors
$\{b_\alpha\}_\alpha$ is symmetric in the sense that
  $b_{\alpha^{-1}}$ is obtained from $b_\alpha$ by permutation of
the tensor factors for all $\alpha\in G$. Note that
for all $\alpha\in G$,
\begin{equation}\label{firstide}
\sum_{i } p^\alpha_i q^\alpha_i=1_B\, .\end{equation} This follows
from the non-degeneracy of $\eta_B$ and the fact that for any $c\in
\rup_1$,
$$  \eta(c , \sum_i
p_i^{\alpha} q_i^{\alpha} ) =\sum_i\eta( c \, p_i^{\alpha},
q_i^{\alpha} )  = \Tr(\mu_c\vert_{\rup_\alpha} :{\rup_\alpha} \to
{\rup_\alpha})=\eta_B(c,1_B)\, .$$ Also, for any $a\in B_{\alpha^{-1}}$ and $b\in B_\alpha$,
\begin{equation}\label{secide}
\sum_i \eta( a,p_i^{\alpha}) \, \eta (b,q_i^{\alpha} )
 =
\eta( a, \sum_i \eta (b,q_i^{\alpha} ) \, p_i^{\alpha}) = \eta(a,b)
\, . \end{equation}

  For example, if $q:G'\to G$ is a group epimorphism with finite
  kernel $\Gamma$, then the group algebra $B=K[G']$ is a
biangular  $G$-algebra, where $B_\alpha=K[q^{-1}(\alpha)]\subset B$
for all
  $\alpha\in G$. Observe that
  $\eta_B(a,b)=\vert \Gamma \vert$ if $a,b\in G'$ satisfy $ab=1$  and $\eta_B(a,b)=0$ for all  other $a,b\in
  G'$.
  For $\alpha\in G$, we have $b_\alpha=\vert \Gamma \vert^{-1}\sum_{a\in q^{-1}(\alpha)}
  a\otimes a^{-1}$. More
  general examples of biangular  $G$-algebras may be derived from the twisted group algebras of
  $G'$ associated with $K^*$-valued 2-cocycles on $G'$.

\subsection  {$G$-systems}\label{Aps1}
Recall  the standard combinatorial description of    maps from a
CW-complex $T$ to
  $X =K(G,1)$.  By  {\it vertices}\index{CW-complex!vertex of},
{\it edges}\index{CW-complex!edge  of}, and  {\it
faces}\index{CW-complex!face  of} of $T$, we  mean 0-cells, 1-cells,
and    2-cells of $T$, respectively. Denote the set of vertices of
$T$ by $\vrt (T)$ and    the set of  oriented  edges of $T$ by
$\Edg(T)$. Each  $e\in \Edg(T)$  leads from an {\it initial vertex}
$i_e $  to a {\it terminal vertex} $t_e $ (they may coincide).
  The orientation reversal defines  a free involution
$e\mapsto e^{-1}$ on $\Edg(T)$.

A face $\Delta$ of $T$ is   adjoined to the 1-skeleton $T^{(1)}$ of
$T$ along a (continuous) map $f_\Delta:S^1\to T^{(1)}$. We call $T$
{\it regular} if for any    face $\Delta$ of $T$, the set
$f_{\Delta}^{-1} (\vrt (T))\subset S^1$ is  a finite non-empty set
which splits $S^1$ into arcs mapped by $f_{\Delta}$ homeomorphically
onto
  edges of~$T$. These arcs in $S^1$  are called the {\it
sides}  of $\Delta$.  The image $f_\Delta(e)$ of a side $e$  of
$\Delta$ is an edge of $T$ called the {\it underlying edge}  of $e$.
We shall often make no difference between sides and their
underlying edges. An orientation of $\Delta$ induces an orientation
and a cyclic o $e_1, e_2, \ldots ,e_n$ on the set of sides
of~$\Delta$. Here $n\geq 1$ is the number of sides and
$t_{e_{r}}=i_{e_{r+1}}$  for  all $r \,(\mod \,n)$. The
corresponding cyclically ordered oriented edges of $T$ form the
boundary of $\Delta$.

Let $T$ be a regular  CW-complex and   $V$ be a subset of $\vrt
(T)$.
  A {\it
$\orlin$-system}\index{$G$-system}
  on  $(T,V)$ is a mapping  $\Edg(T)\to \orlin,
  e\mapsto g_e$  such
that

(i)  $g_{e^{-1}}=(g_e)^{-1}$  for any $e\in \Edg(T)$;

(ii) if  ordered oriented edges $e_1,e_2,\ldots ,e_n$ of $T$ with
$n\geq 1$ form the boundary of a face  of $T$,  then $g_{e_1}
g_{e_2} \cdot\cdot\cdot g_{e_n}=1$.


Two $\orlin$-systems $g,g'$ on $(T,V)$ are {\it homotopic} if there
is a map $v: \vrt (T)\to \orlin$ such that $v (V)=1$ and $ g'_e=v
(i_e) \,g_e\, (v (t_e))^{-1}$ for all $e\in \Edg (T)$. Homotopy  is
an equivalence relation on the set of $\orlin$-systems on $(T,V)$.

A $\orlin$-system $g $ on $(T,V)$ gives rise to a map $\vert g
\vert: \vert T \vert  \to X$, where $\vert T \vert$ is the
underlying topological space of $T$. This map    sends $\vrt (T)
 $   to the base
point  $x\in X $ and sends each  $e\in \Edg(T)$ into a loop in $X$
representing $g_e\in \orlin=\pi_1(X,x)$. The  map $g \mapsto \vert g
\vert$ induces a bijection between the homotopy classes of
$\orlin$-systems on $(T, V)$ and the homotopy classes  of maps
$(\vert T \vert , V) \to (X,x)$.

  \subsection  {State sums on closed surfaces.}\label{Ssocs}
  Fix a biangular
$\orlin$-algebra $ \rup=\oplus_{\alpha \in G} \rup_\alpha
  $. Let $W$ be
a closed  $X$-surface, i.e., a closed  oriented surface endowed with
a  map  $W\to X=K(\orlin,1)$.  We define    $ \tau_{ \rup}(W) \in K$
as follows.

Pick a regular CW-decomposition $T$ of $W$ (for example, a
triangulation). By a {\it flag}  of $T$, we mean a pair (a face
$\Delta$ of $T$, a side $e$ of $\Delta$). The flag
  $(\Delta,e)$  induces an orientation on $e$ such that
$\Delta$ lies on the right of $e$. This means that the pair (a
vector looking from a point of $e$ into $\Delta$, the oriented edge
of~$T$ underlying~$e$) is positive  with respect to the given
orientation of $W$.

Let  $g $ be a  $\orlin$-system on $T$ representing the  homotopy
class of the given map  $\vert T \vert=W\to X$ (here $V=\emptyset$).
With each flag $(\Delta,e) $ of  $T$  we associate the $K$-module ${
\rup}(\Delta,e,g)={ \rup}_{g_e}$ where  $e$ is oriented so that
$\Delta$ lies on its right.

Every  edge $e$ of $T$ appears in two flags, $(\Delta_1,e) ,
(\Delta_2,e) $, and inherits from them opposite orientations. Since
the corresponding values of $g$  are mutually inverse,  Section
\ref{BGa} yields a
  vector $b_{g_e}\in
{ \rup}(\Delta_1,e,g)  \otimes { \rup}(\Delta_2,e,g)$.  Set
$$b_g=\otimes_e \, b_{g_e} \,\in \, \otimes_{(\Delta,e) } \, {
\rup}(\Delta,e,g) $$ where  on the left-hand side  $e$ runs over all
unoriented edges of $T$ and on the right-hand side  $(\Delta,e) $
runs over all flags of $T$.

Let $\Delta$ be  a  face of $T$ with $n $ sides. We orient and
cyclically order  the sides $e_1, e_2, \ldots ,e_n$ of $\Delta$ so
that $\Delta$ lies on their right. Then $g_{e_1}g_{e_2} \cdots
g_{e_n}=1$. The  form \begin{equation}\label{form} {
\rup}(\Delta,e_1,g) \otimes { \rup}(\Delta,e_2,g)\otimes
\cdot\cdot\cdot \otimes { \rup}(\Delta,e_n,g) \to K\, ,
\end{equation} defined by
$$ a_1\otimes a_2 \otimes \ldots \otimes a_n \mapsto \eta_B(a_1 a_2
\cdot\cdot\cdot a_n,1_{ \rup})$$  is invariant under cyclic
permutations. The tensor product of these
  forms over all  faces of $T$ is a homomorphism
$d_g: \otimes_{(\Delta,e) } { \rup}(\Delta,e,g) \to K$.
  Set  $\langle g
\rangle=d_g(b_g)\in K$.

\begin{lemma}\label{lele-1} $\tau_B(W)=\langle g \rangle \in K  $
   depends only on the
homotopy class of the  given map $W\to X$.
  \end{lemma}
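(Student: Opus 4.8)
The plan is to show that the scalar $\langle g\rangle = d_g(b_g)$ is unchanged under (a) replacing the $G$-system $g$ by a homotopic one, and (b) changing the regular CW-decomposition $T$ of $W$. Since any two $G$-systems representing the same homotopy class of map $W\to X$ become homotopic after a common subdivision, and since any two regular CW-decompositions of $W$ have a common subdivision obtained by a finite sequence of elementary moves, these two invariance statements together give the lemma. First I would treat homotopy invariance for a fixed $T$: if $g'_e = v(i_e)\,g_e\,(v(t_e))^{-1}$ for some $v:\vrt(T)\to G$, then for each vertex $P$ the element $v(P)\in G$ acts on the spaces $B_{g_e}$ attached to edges incident to $P$ by left or right multiplication by (a lift of) $v(P)$; one checks that the contraction pattern is invariant because the form $a_1\otimes\cdots\otimes a_n\mapsto \eta_B(a_1\cdots a_n,1_B)$ is compatible with inserting $v(P)^{-1}v(P)$ at the vertex $P$ between consecutive sides, and the copairing vectors $b_{g_e}$ transform contravariantly along each edge. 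This is a bookkeeping argument using only the defining properties of $\eta_B$ and $b_\alpha$ from Section~\ref{BGa}.

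Next I would establish independence of the CW-decomposition. Here the standard strategy is to reduce to a short list of local moves relating any two regular CW-decompositions of a surface: (i) subdividing an edge by a new vertex, (ii) subdividing a face by a new edge connecting two of its vertices (or adding a new vertex in the interior of a face together with edges to the boundary), and their inverses; together these generate all changes, up to the homotopy moves on $g$ already handled. For each move I must check that $\langle g\rangle$ is unchanged. The edge-subdivision move uses identity~(\ref{firstide}) $\sum_i p_i^\alpha q_i^\alpha = 1_B$ together with the factorization $b_{\alpha} = \sum_i p_i^\alpha\otimes q_i^\alpha$: splitting an edge labeled $\alpha$ into two edges labeled $\alpha$ and $1$ (after a homotopy normalizing the new vertex) inserts a copairing and then a contraction that collapses by~(\ref{firstide}). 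The face-subdivision move uses the associativity of $B$ together with property~(3) of the inner product, $\eta(a,b)=\eta(ab,1_B)$, which says exactly that the $n$-valent face form factors through the product, so cutting a face along a diagonal edge $e$ and summing $\sum_i p_i^{g_e}\otimes q_i^{g_e}$ over the two new faces reproduces the original form; the compatibility with the copairing is again identity~(\ref{secide}).

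The main obstacle I expect is the second part: pinning down a clean, complete set of local moves for \emph{regular} CW-decompositions (not just triangulations) of a compact surface, and verifying that the state sum is genuinely local so that each move only affects the finitely many tensor factors near the modified cells. Dealing with faces of arbitrarily many sides, with possibly repeated edges on the boundary of a single face or a single edge bounding the same face twice, requires care; the cyclic-invariance of the face form~(\ref{form}) and the symmetry $b_{\alpha^{-1}} = \sigma(b_\alpha)$ are what make these degenerate configurations harmless, but spelling this out rigorously is the delicate point. Once locality is in hand, each individual move reduces to one of the three algebraic identities~(\ref{firstide}), (\ref{secide}), or property~(3) of $\eta_B$, plus associativity, so the computations themselves are short. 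I would also remark that for triangulations alone one can invoke the classical Pachner moves (2--2 and 1--3 bistellar flips), which correspond respectively to associativity of $B$ and to the handle/counit relation $\sum_i p_i^\alpha q_i^\alpha = 1_B$ reinterpreted via $\eta_B$; passing from a general regular CW-decomposition to a triangulation (by starring faces and edges) is itself a sequence of the subdivision moves above, so this gives an alternative route to the same conclusion.
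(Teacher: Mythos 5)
Your structural plan (invariance under homotopy moves for fixed $T$, then invariance under change of $T$) matches the paper's, but you carry it out in the primal CW language with subdivision moves where the paper switches to the dual language of skeletons (spines) and uses two explicit local moves, the \emph{loop move} and the \emph{contraction move}, together with the citation to the theory of spines of surfaces to get a complete move set. These are roughly dual to your subdivision moves, and the algebraic verifications you identify --- (\ref{firstide}) for introducing/removing a trivial edge and (\ref{secide}) plus $\eta(a,b)=\eta(ab,1_B)$ for a diagonal split --- are indeed the identities the paper uses.

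However, your step~(a), homotopy invariance for fixed $T$, has a genuine gap as written. You propose that $v(P)\in G$ ``acts on the spaces $B_{g_e}$ attached to edges incident to $P$ by left or right multiplication by (a lift of) $v(P)$,'' and that one can ``insert $v(P)^{-1}v(P)$ at the vertex $P$.'' But an abstract biangular $\orlin$-algebra $B=\bigoplus_\alpha B_\alpha$ carries no distinguished multiplication by group elements, no canonical vectors in $B_{v(P)}$ to insert, and no canonical isomorphisms $B_\alpha\to B_{v(P)\alpha}$. In the group-algebra example $B=K[G']$ you could pick a lift of $v(P)$ in $G'$, but the lemma must hold for every biangular $G$-algebra, and for a general $B$ there is nothing to insert. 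The paper sidesteps this entirely: for a single $v_0$-move at a region $U$ of $W\setminus S$ it adjoins an auxiliary loop edge $e$ to the skeleton (an inverse loop move), then drags one endpoint of $e$ around the whole boundary of $U$ --- each crossing of a vertex being a contraction move followed by an inverse contraction move --- and finally removes $e$ by a loop move. This realizes the homotopy move as a finite sequence of basic moves whose invariance was already established from (\ref{firstide}) and (\ref{secide}), so no algebraic ``conjugation by $v(P)$'' is ever needed. You would need to replace your sketch of step~(a) with something of this kind, or with some other device that does not presuppose an action of $G$ on $B$.

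A smaller issue: your edge-subdivision move is described as splitting an edge labelled $\alpha$ into edges labelled $\alpha$ and $1$ ``after a homotopy normalizing the new vertex,'' which quietly invokes the homotopy invariance you are in the middle of proving. This is fixable by ordering the argument carefully, but as stated it is circular. You do correctly flag the move-completeness issue for regular CW-decompositions as the hard technical point; the paper discharges it by citing the spine theory of surfaces rather than proving it.
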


\begin{proof} We need to prove that $\langle g \rangle$ does not depend on the choice of
  $g$ in its homotopy class and does not depend on the choice of
  $T$. It is convenient to switch to
the dual language of skeletons of $W$. A {\it skeleton} of $W$ is a
finite graph on $W$ whose complementary regions are   open  2-disks.
A skeleton may have loop edges (i.e., edges with coinciding
endpoints) and multiple edges (i.e., different edges with the same
endpoints). A regular CW-decomposition  $T$ of $W$ determines a
skeleton $S_T$ of $W$. It is obtained by placing a  \lq\lq central"
point  in each face of $T$ and connecting the centers of any two
faces adjacent to the same 1-cell $e$ of $T$ by an edge meeting $e$
transversely in one point and disjoint from the rest of $T^{(1)}$.
This establishes an equivalence between   regular CW-decompositions
and   skeletons. The  definitions of a $G$-system $g$ on $T$ and of
$\langle g\rangle$ can be easily rewritten in terms of labelings of
the oriented edges of skeletons.

We define two local moves on  a  skeleton $S$ of $W$.    The {\it
contraction move}  contracts an edge  of $S$  with distinct
endpoints.  The corresponding move on CW-decompositions erases an
open edge adjacent to two distinct faces.  The  {\it  loop move}  on
${S} $ removes a   loop edge   bounding
  a   disk in $W- {S}$.   The corresponding
move on CW-decompositions erases a vertex of valency 1 and the
incident open edge.   The contraction moves, the  loop moves, and
their inverses are called {\it basic moves}. Using the theory of
spines of surfaces, one shows that any two skeletons of $W$ can be
related by a finite sequence of basic moves  and ambient isotopy in
$W$. The invariance of $\langle g \rangle$ under the loop move and
the contraction move follows from (\ref{firstide}) and
(\ref{secide}), respectively.

It remains to prove the invariance of  $\langle g \rangle$ under a
homotopy move $g\mapsto g'$.   In terms of   the dual  skeleton $S$,
the move is determined by a   map $v:\pi_0(W-S)\to G$. We have
$g'(f)= v (U) g(f) (v(V))^{-1}$ where $f$ is any oriented edge  of
$S$ and $U,V$ are the components of $W- {S}$ lying on the right and
on the left of $f$, respectively (possibly, $U=V$). It is enough to
consider the case where $v$ takes value $1\in \orlin$ on all
components of $W- {S}$ except a single  component $U$, where this
value is $v_0\in G$. We say   that $g'$ is obtained from $g$ by a
$v_0$-move at $U$.  Pick a small embedded loop $e\subset U$ based at
a vertex of $S$ adjacent to $U$. The loop $e$ splits $U$ into a
small  disk bounded by $e$ and a complementary open 2-disk $D$. Then
${S}_1={S} \cup e$ is a skeleton of $W$ obtained from ${S}$ by an
inverse loop move. The $G$-systems
 $g,g'$ on $S$ extend
 to   $\orlin$-systems $g_1,g'_1$ on  ${S}_1$
 related by the $v_0$-move  at the component $ D$
of $W- {S}_1$ (the element  $g_1(e)\in G$ may be chosen arbitrarily;
this choice and $v_0$ determine  $g'_1(e)\in G$).
 Now we modify ${S}_1$
keeping one   endpoint of $e$  and sliding the second endpoint along
the sides of~$U$.  We do this until the second endpoint traverses
the entire boundary of $U$ and comes back to its original position
from the other side. During this deformation $e$ becomes an embedded
arc except at the beginning and   the end when $e$ is an embedded
loop.
 At the end of the deformation, the  skeleton ${S}_1={S} \cup e $
is transformed into a  skeleton ${S}_2$
 isotopic to ${S}_1$.
 Note that when
 the moving endpoint  of $e$  traverses  a vertex of ${S}$ adjacent
to $U$, the  skeleton ${S} \cup e$  is transformed via a composition
of a contraction move with an inverse contraction move. Under these
two moves, the $\orlin$-systems $g_1,g'_1$  are transformed in a
canonical way  keeping their values on all edges except the
contracted ones. Throughout the deformation, these two
$\orlin$-systems
    remain  related by the
$v_0$-move at the image of $  D$  under the deformation. At the end
of the deformation the $\orlin$-systems $g_1,g'_1$  are transformed
into $\orlin$-systems, $g_2,g'_2$ on ${S}_2$  related by the
$v_0$-move at the image of $  D$ under the deformation. This image
is a small 2-disk bounded by $e$. Applying the loop move, we
transform $g_2,g'_2$ into one and the same $\orlin$-system $g_3$ on
${S}$. This gives  six sequences of basic moves:
 $g \mapsto g_1\mapsto g_2\mapsto g_3 \mapsto g'_2\mapsto g'_1\mapsto
 g'$.
  By
the invariance of $\langle g\rangle$  under the basic moves,
$\langle g \rangle=\langle g' \rangle$.
\end{proof}

  \subsection  {A
pseudo-HQFT}\label{statesu} The invariant $ \tau_B$ of closed
$X$-surfaces derived above from  a biangular $\orlin$-algebra $B$ is
extended here to a
  2-dimensional \lq\lq pseudo-HQFT" $(A=A_B, \tau=\tau_B)$ with target $X=K(G,1)$.
The prefix \lq\lq pseudo" appears here for two reasons. First of
all, the pseudo-HQFT $(A, \tau)$ applies to 1-dimensional
$X$-manifolds with a certain additional structure called
trivialization. Secondly, $(A, \tau)$ fails  to satisfy Axiom (6) of
Section \ref{ee3.2}. This pseudo-HQFT   will be transformed into a
genuine HQFT in the next subsection.

By $X$-curves, we shall mean   1-dimensional $X$-manifolds. A {\it
trivialized $X$-curve} is a triple $(M,T,g)$, where $M$ is   a
pointed closed    1-dimensional manifold, $T$ is a CW-decomposition
of $M$ such that
  $M_\bullet\subset \vrt (T)$  and $g$ is a
$\orlin$-system  on~$T$.  This $\orlin$-system   gives rise to a
homotopy class of maps $\vert g \vert: (M, M_{\bullet}  )\to (X,x)$,
where $x$ is the base point of $X$. This makes $M$ into an $X$-curve
and allows us to consider trivializations as additional structures
on $X$-curves.

We associate with a  trivialized    $X$-curve $M=(M,T,g)$ the
finite-dimensional vector space $A_{M} =\otimes_e \, {{
\rup}_{g_e}}$, where $e$ runs over the  edges of $T$    endowed with
the orientation  induced by that of $M$.
 By
definition, $M=\emptyset$ is trivialized and $A_{\emptyset}=K$. Note
that  $A_{M\amalg N}=A_{M} \otimes A_{N}$ for all $M,N$.

 An $X$-homeomorphism of trivialized  $X$-curves   is a homeomorphism of the underlying manifolds
preserving the base points, the orientation, the trivialization, and
the $G$-system. Such homeomorphisms act on the associated vector
spaces in the obvious tautological way.

  Consider  a 2-dimensional $X$-cobordism
$(W,M_0,M_1 )$ such that both  $X$-curves  $M_0$ and $M_1$ are
  trivialized.  We define a $K$-homomorphism
$\tau(W):A_{M_0} \to A_{M_1}$ as follows. Pick a regular
CW-decomposition $T$ of $W$ extending the given CW-decompositions of
$ M_0 $ and $M_1$.  Consider  a $\orlin$-system $g$ on $ T $ such
that  $g$ extends
  the  $\orlin$-systems on $M_0$ and $  M_1$
provided by the trivialization and  the induced map $\vert g \vert:
(W, (\partial W)_{\bullet} )\to (X,x)$ is in the given homotopy
class. Flags $(\Delta,e) $ of  $T$  and the vector  spaces ${
\rup}(\Delta,e,g)={ \rup}_{g_e}$ are defined as in Section
\ref{Ssocs}. As in Section \ref{Ssocs}, we have a homomorphism $ d_{
g}: \otimes_{(\Delta,e) } { \rup}(\Delta,e, g) \to K$, where
$(\Delta,e) $ runs over all flags of $T$, and a  vector
$$b_g=\otimes_e \, b_{g_e}\in \otimes_{(\Delta,e\subset \Int W) } {
\rup}(\Delta,e,g)
$$ where   $e$ runs  over all edges
of $T$ lying in $\Int \, W
 $ and    $(\Delta,e) $ runs over all flags of  $T$
such that $e\subset \Int \, W$.   Contracting $d_g$ with $  b_g$,
we obtain a homomorphism \begin{equation}\label{ggg} \langle g
\rangle: \otimes_{ e \subset M_0\cup M_1 } { \rup}(\Delta_e,e, g)
\to K\, .\end{equation}

Since $\partial W= M_1\cup (-M_0)$, the  face   of $T$ adjacent to
$e \subset M_r$ lies on the left of $e$  for $r=1$ and  on the right
of $e$  for $r=0$. Therefore
$${ \rup}(\Delta_e,e,  g)=
\begin{cases} { \rup}_{g_e}~ { \rm  {if}}\,\,\, e\subset M_0, \\
  { \rup}_{g_e^{-1}}=({ \rup}_{g_e})^* ~    {\rm  {if}}\,\,\, e\subset
M_1.\end{cases}
$$
Here we identify ${ \rup}_{g_e^{-1}}$ with the dual of $ {
\rup}_{g_e}$ using the inner product $\eta_B$ on $B$. Thus, the
homomorphism (\ref{ggg}) is adjoint to  a homomorphism
$$ A_{M_0}=\bigotimes_{e \subset M_0}
{ \rup}_{g_e} \to \bigotimes_{e \subset M_1}  { \rup}_{
g_e}=A_{M_1}\,$$ denoted $\tau(W)$.

\begin{lemma}\label{lele1} The homomorphism
$\tau(W)$ does not depend on the choice of  $T$ and $g$. (It depends
only on the trivializations of the $X$-curves $M_0, M_1$ and the
homotopy class of the given map $(W, (\partial W)_\bullet)\to
(X,x)$).\end{lemma}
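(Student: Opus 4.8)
The plan is to reduce the statement, exactly as in the proof of Lemma \ref{lele-1}, to the invariance of a single functional under a short list of local moves, the only new feature being that all moves must now be confined to $\Int\,W$ so as not to disturb the fixed CW-decompositions and $\orlin$-systems on $M_0$ and $M_1$. Observe first that, because those boundary data are held fixed, the source $A_{M_0}=\bigotimes_{e\subset M_0}\rup_{g_e}$, the target $A_{M_1}=\bigotimes_{e\subset M_1}\rup_{g_e}$, and the space $\bigotimes_{e\subset M_0\cup M_1}\rup(\Delta_e,e,g)$ carrying the functional (\ref{ggg}) do not depend on the choices to be compared; since $\tau(W)$ is by construction adjoint to this functional, it suffices to prove that the functional $\langle g\rangle$ is unchanged.

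Step 1: fix $T$ and vary $g$. Two admissible $\orlin$-systems $g,g'$ on $T$ restrict to the same $\orlin$-system on $\partial W$ and induce homotopic maps $(W,(\partial W)_\bullet)\to(X,x)$; using the asphericity of $X$ and the correspondence of Section \ref{Aps1}, one checks that they are connected by a homotopy $v\colon\vrt(T)\to\orlin$ that is trivial on all vertices lying in $\partial W$, hence by a finite sequence of $v_0$-moves supported at complementary regions $U$ of the dual skeleton $S_T$ with $U\cap\partial W=\emptyset$. For each such interior $v_0$-move I would repeat verbatim the loop-sliding argument from the proof of Lemma \ref{lele-1}: realize it as a composition of an inverse loop move, interior contraction and inverse-contraction moves, and a loop move; the local computations appealing to (\ref{firstide}) and (\ref{secide}) are untouched by the presence of $\partial W$ since every move occurs in $\Int\,W$, and they show that $\langle g\rangle$ is preserved.

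Step 2: vary $T$. Any two regular CW-decompositions of $W$ that restrict to the prescribed CW-decompositions of $M_0$ and $M_1$ are related by a finite sequence of contraction and loop moves (and their inverses) performed in $\Int\,W$, together with an ambient isotopy of $W$ constant on $\partial W$; this is the relative version, rel $\partial W$, of the spine-theoretic fact used in Lemma \ref{lele-1}. The invariance of $\langle g\rangle$ under each such move is again the purely local identity established there, and combining Steps 1 and 2 yields the Lemma; the accompanying assertion --- that $\tau(W)$ depends only on the homotopy class rel $(\partial W)_\bullet$ of the map $W\to X$ --- is exactly what Step 1 delivers.

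The main obstacle is the boundary bookkeeping concentrated in Steps 1 and 2: showing that a homotopy between admissible $\orlin$-systems, and a sequence of basic moves between admissible CW-decompositions, may always be chosen to leave the boundary decompositions and $\orlin$-systems strictly fixed, i.e.\ to take place entirely in $\Int\,W$. Once this is granted, no new algebra beyond that of Lemma \ref{lele-1} is needed; in particular the genuinely HQFT-theoretic subtleties (the failure of Axiom (6), the dependence on trivializations) are not at issue here and are postponed to the next subsection.
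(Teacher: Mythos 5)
Your proof is correct and follows essentially the same route as the paper: the paper's own proof is the single remark that the argument of Lemma~\ref{lele-1} carries over, the only difference being that a skeleton of $W$ now meets $\partial W$ in the 1-valent vertices dual to the boundary 1-cells. You expand that remark into two explicit steps (vary $g$ rel $\partial W$, then vary $T$ rel $\partial W$) and correctly identify the boundary-bookkeeping as the only new issue, but the underlying mechanism — reduction to the local moves of Lemma~\ref{lele-1} supported in $\Int\,W$, with invariance from (\ref{firstide}) and (\ref{secide}) — is the same.
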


 The proof goes along the same lines as the proof of
 Lemma \ref{lele-1}. The only difference is that a skeleton  of $W$
  meets $\partial W$ along a set of 1-valent
 vertices that are the centers of the 1-cells
  of $M_0\cup M_1$

\begin{lemma}\label{lele2}  Let $M_0,M_1,N$ be
trivialized   $X$-curves. If a 2-dimensional $X$-cobordism
$(W,M_0,M_1)$ is obtained  by gluing
  two $X$-cobordisms $(W_0,M_0, N)$ and
$(W_1,N, M_1)$ along $N$, then  $\tau(W)= \tau(W_1)\circ
\tau(W_0):A_{M_0} \to A_{M_1}$. \end{lemma}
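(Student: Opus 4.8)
The statement is a gluing axiom (Axiom (3) of an HQFT) for the pseudo-HQFT $\tau_B$ restricted to cobordisms between trivialized $X$-curves, and the natural strategy is to compute $\tau(W)$ directly from a CW-decomposition of $W$ that is compatible with the decomposition into $W_0$ and $W_1$ along $N$. The key point is that the state-sum defining $\tau(W)$ is \emph{local}: it is built from the vectors $b_{g_e}$ attached to edges and the cyclic forms $a_1\otimes\cdots\otimes a_n\mapsto \eta_B(a_1\cdots a_n,1_B)$ attached to faces, and these contributions can be grouped according to whether the underlying cell lies in $W_0$, in $W_1$, or on $N$.

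First I would choose a regular CW-decomposition $T$ of $W$ whose restriction to $N$ is the given CW-decomposition $T_N$ of $N$ used in its trivialization, and so that $T$ restricts to CW-decompositions $T_0$ of $W_0$ and $T_1$ of $W_1$ extending $T_N$ and the decompositions of $M_0,M_1$. By Lemma~\ref{lele1}, $\tau(W),\tau(W_0),\tau(W_1)$ may all be computed using these particular choices (and a $G$-system $g$ on $T$ restricting to the given $G$-systems on the $X$-curves and lying in the prescribed homotopy class; such $g$ exists and its restrictions $g_0,g_1$ to $T_0,T_1$ are admissible for $W_0,W_1$). Next I would write out $\tau(W)$ as the adjoint of $\langle g\rangle=d_g(b_g)$ and split the tensor factors: the faces of $T$ are partitioned into those in $W_0$ and those in $W_1$, the interior edges of $W$ are partitioned into those in $\Int W_0$, those in $\Int W_1$, and those lying on $N$. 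Correspondingly $d_g=d_{g_0}\otimes d_{g_1}$ and $b_g=b_{g_0}^{\Int}\otimes\big(\otimes_{e\subset N}b_{g_e}\big)\otimes b_{g_1}^{\Int}$, where $b_{g_r}^{\Int}$ is the interior-edge part for $W_r$.

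The heart of the argument is then the contraction identity: contracting $d_g$ with $b_g$ and regrouping, the edges lying on $N$ contribute exactly the vectors $b_{g_e}\in B_{g_e}\otimes B_{g_e^{-1}}$ which, under the dualities $B_{g_e^{-1}}\cong (B_{g_e})^*$ used to define $A_N$, implement the contraction of the $A_N$-factor coming out of $\tau(W_0)$ against the $A_N^*$-factor (equivalently $A_{-N}$) going into $\tau(W_1)$. Concretely, $\langle g\rangle$ is the composition of $\langle g_0\rangle:A_{M_0}\to A_N$ with $\langle g_1\rangle:A_N\to A_{M_1}$ once one observes that the face-forms and interior-edge vectors of $W_0$ assemble to $\tau(W_0)$, those of $W_1$ assemble to $\tau(W_1)$, and the $N$-edge vectors $\{b_{g_e}\}_{e\subset N}$ are precisely the copairing realizing the canonical pairing $A_N\otimes A_{-N}\to K$; this is the same bookkeeping as in formulas (\ref{firstide})--(\ref{secide}) and in the definition of $\eta_B$. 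Taking adjoints throughout turns this composition of forms into the composition $\tau(W_1)\circ\tau(W_0)$ of homomorphisms, which is the claim.

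\textbf{Main obstacle.} The only real subtlety — and the step I expect to be most delicate — is the \emph{sign/orientation bookkeeping at $N$}: when $N$ sits inside $W$ it bounds $W_0$ on one side and $W_1$ on the other, so the flag orientations induced on an edge $e\subset N$ from its two adjacent faces are opposite, and one must check that the identification ${B}_{g_e^{-1}}\cong(B_{g_e})^*$ used in the definition of $A_N$ (as boundary of $W_0$, i.e.\ viewing $N$ with one orientation) is \emph{dual} to the one used for $A_N$ as a source of $\tau(W_1)$ (where $N$ appears with the opposite role $-N$), exactly so that the $b_{g_e}$ vectors effect an honest contraction rather than introducing an extraneous transpose. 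This is handled by the symmetry of the system $\{b_\alpha\}_\alpha$ noted after (\ref{firstide}) ($b_{\alpha^{-1}}$ is $b_\alpha$ with the tensor factors permuted) together with the symmetry of $\eta_B$; once this compatibility is recorded, the rest is the routine tensor-contraction computation sketched above, and the asserted equality $\tau(W)=\tau(W_1)\circ\tau(W_0)$ follows.
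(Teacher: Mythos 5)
Your proposal is correct and follows essentially the same route as the paper: glue CW-decompositions of $W_0$ and $W_1$ along (the given decomposition of) $N$, observe that the edges on $N$ become interior edges of $W$ contributing the copairing vectors $b_{g_e}$, and invoke the identity (\ref{secide}) to see that contracting against these realizes the composition $\tau(W_1)\circ\tau(W_0)$. The paper compresses all of this into a single sentence citing (\ref{secide}); your write-up simply spells out the underlying tensor-contraction and orientation bookkeeping.
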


\begin{proof}  For $r=0,1$, pick  a regular
CW-decomposition $T_r$ of $W_r$ extending the given
CW-decompositions of the bases.  Let  $g_r$ be a $\orlin$-system on
$ T_r
 $ representing the given map $W_r\to X$.  Gluing $T_0$ and $T_1$ along $N$
we obtain a regular CW-decomposition, $T$, of $W$.   Let $g$ be the
unique $\orlin$-system on $ T $ extending  $g_0$ and~$g_1$.
Formula (\ref{secide}) implies that $\langle g \rangle =\langle g_1
\rangle \circ \langle g_0 \rangle: A_{M_0} \to A_{M_1} $.
\end{proof}

To sum up, the rule $M\mapsto  A_M$, $W\mapsto \tau(W)$ defines a
sort of HQFT for trivialized $X$-curves and for $X$-cobordisms with
trivialized bases. This \lq\lq pseudo-HQFT" $(A, \tau)$ satisfies
all axioms of Section \ref{ee3.2} except possibly Axiom (6).

\subsection  {A 2-dimensional HQFT}\label{statesu++}  We now
  derive from the pseudo-HQFT  $(A=A_B, \tau=\tau_B)$   a
genuine   HQFT, cf.\ [Tu2], Section VII.3. Observe that for any
trivializations $t_0, t_1$ of an $X$-curve $M$, the cylinder
$W=M\times [0,1]$ (mapped to $X$ via the composition of the
projection $W\to M $ with the given map $M\to X$) is an
$X$-cobordism between the trivialized $X$-curves $(M, t_0)$ and $(M,
t_1)$. Set
$$p(t_0,t_1)=\tau(W):A_{(M, t_0)}\to A_{(M, t_1)}\, .$$
By Lemma \ref{lele2}, $p(t_0,t_2)=p(t_1,t_2)\, p(t_0,t_1)$ for any
trivializations $t_0,t_1, t_2$ of~$M$.  In particular,  $p(t_0,t_0)$
is a projector onto a subspace $A^{\circ}_{(M, t_0)}$, of $A_{(M,
t_0)}$. Moreover,
  $p(t_0,t_1)$ maps
$A^{\circ}_{(M, t_0)}$ isomorphically onto $A^{\circ}_{(M, t_1)}$.
We identify the vector spaces $\{A^{\circ}_{(M, t)}\}_{t}$ (where
$t$ runs over all trivializations of $M$) along these isomorphisms
and   obtain a vector space  $A^{\circ}_M$ independent of
  $t$. By definition,  for any trivialization  $t$ of   $M$, we have a canonical embedding
  $A^\circ_M\subset A^{}_{(M,
t)}$  and a canonical projection $P_M:A^{}_{(M, t)} \to A^\circ_M$
which is   the identity on $A^\circ_M$.

 To define the action
$f_{\#}:A^{\circ}_M \to A^{\circ}_{M'}$ of an $X$-homeomorphism
$f:M\to M'$,
  pick a trivialization $t$ of $M$
  and consider the   trivialization $t'= f(t)$ of $M'$. Then
  $f_{\#}$
  is  the composition of the identification
isomorphism  $A^{\circ}_M\cong  A^{\circ}_{(M,t)}$ with the
isomorphism $A^{\circ}_{(M,t)}\cong  A^{\circ}_{(M',t')}$ induced by
$f$ and with the identification isomorphism $ A^{\circ}_{(M',t')}
  \cong A^{\circ}_{M'}$.  This composition is independent of the
  choice of $t$.

A 2-dimensional $X$-cobordism $(W,M_0,M_1)$    splits as a union of
collars of $M_0, M_1$  with a copy of $W$. Therefore for any
  trivializations $t_0, t_1$ of $M_0, M_1$, we have
$\tau(W)=P_{M_1}\tau(W) P_{M_0}:A_{(M_0, t_0)}\to A_{(M_1, t_1)}$.
Let $\tau^{\circ} (W):A^{\circ}_{M_0} \to A^{\circ}_{M_1}$  be the
restriction of $\tau(W)$ to   $A^{\circ}_{M_0}\subset A_{(M_0,
t_0)}$. This homomorphism   does not depend on $t_0, t_1$ and enters
the
  commutative diagram
\begin{equation}\label{CDDD}\CD A^{}_{(M_0, t_0)} @>P_{M_0}>>   A^{\circ}_{M_0} @>{\rm inclusion}>> A^{}_{(M_0, t_0)}\\
  @V\tau(W)VV    @VV\tau^\circ(W)V @VV\tau(W)V\\
  A^{}_{(M_1, t_1)} @>P_{M_1}>>   A^{\circ}_{M_1} @>{\rm inclusion}>> A^{}_{(M_1, t_1)} \, . \endCD  \end{equation}

\begin{theor}\label{t24} The   rule $M\mapsto  A^{\circ}_M$,
   $W\mapsto \tau^{\circ}(W)$ defines
 an  $X$-HQFT.  \end{theor}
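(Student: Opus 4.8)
The goal is to verify that the rule $M\mapsto A^\circ_M$, $W\mapsto\tau^\circ(W)$ satisfies Axioms (1)--(7) of Section~\ref{ee3.2}. Most of these come essentially for free from the corresponding properties of the pseudo-HQFT $(A,\tau)$ established in Lemmas~\ref{lele1} and~\ref{lele2}, together with the naturality of the projectors $P_M$; the only axiom requiring genuine work is Axiom~(6), which is precisely the point of passing from $(A,\tau)$ to $(A^\circ,\tau^\circ)$. First I would record the basic compatibility facts already assembled in Section~\ref{statesu++}: the projectors $p(t_0,t_1)$ compose correctly (so $A^\circ_M$ is well defined independently of the chosen trivialization $t$), they intertwine the action of $X$-homeomorphisms, and the diagram (\ref{CDDD}) commutes. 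These are the tools used repeatedly below.

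\textbf{Functoriality axioms (1), (2).} For Axiom~(1), given $X$-homeomorphisms $f:M\to M'$ and $f':M'\to M''$, choose a trivialization $t$ of $M$ and transport it to $t'=f(t)$ on $M'$ and $t''=f'(t')$ on $M''$; then $f'_\#f_\#$ and $(f'f)_\#$ are computed using the same triple of identification isomorphisms, and functoriality of $f\mapsto f_\#$ for the pseudo-HQFT gives the result. For Axiom~(2), given an $X$-homeomorphism $F$ of $X$-cobordisms, pick trivializations of $M_0,M_1$ and use $F$ to transport them to $M_0',M_1'$; the pseudo-HQFT version of Axiom~(2) (which holds since $(A,\tau)$ satisfies all axioms except possibly (6)) gives a commutative square, and pre/post-composing with the projectors $P_{M_i}$, $P_{M_i'}$ and using that these commute with the $f_\#$'s yields the commutative square for $(A^\circ,\tau^\circ)$. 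Here one invokes Lemma~\ref{l1908} to know that the answer is insensitive to isotopy of $F$, so transporting trivializations causes no ambiguity.

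\textbf{Gluing and disjoint-union axioms (3), (4), (5).} Axiom~(3) is Lemma~\ref{lele2} restricted to the subspaces $A^\circ$: if $W$ is obtained by gluing $W_0$ and $W_1$ along $N$, choose a trivialization $t_N$ of $N$, apply Lemma~\ref{lele2} to get $\tau(W)=\tau(W_1)\circ\tau(W_0)$ on the level of $(A,\tau)$, and restrict to $A^\circ_{M_0}$, using (\ref{CDDD}) twice (once for $W_0$, once for $W_1$) together with $P_N\,\tau(W_0)=\tau(W_0)$ read off from the collar decomposition, to conclude $\tau^\circ(W)=\tau^\circ(W_1)\circ\tau^\circ(W_0)$. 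The dependence on the auxiliary $X$-homeomorphism $f:N\to N'$ in the axiom's statement is handled by transporting $t_N$ along $f$. Axioms~(4) and~(5) are immediate from $A_{M\amalg N}=A_M\otimes A_N$ and $\tau(W_1\amalg W_2)=\tau(W_1)\otimes\tau(W_2)$ for the pseudo-HQFT, since the projector for a disjoint union is the tensor product of the projectors, so $A^\circ_{M\amalg N}=A^\circ_M\otimes A^\circ_N$ and similarly for $\tau^\circ$; also $A^\circ_\emptyset=A_\emptyset=K$ since $\emptyset$ has only the trivial trivialization.

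\textbf{Axiom (6) --- the main point.} For a cylinder $W=M\times[0,1]$ with the pulled-back map, choose \emph{one} trivialization $t$ of $M$ and use the \emph{product} CW-decomposition on $W$ induced by $T$ together with the corresponding $\orlin$-system; then the homomorphism $\tau(W):A_{(M,t)}\to A_{(M,t)}$ is by definition $p(t,t)$, whose restriction to $A^\circ_{(M,t)}$ is the identity. Unwinding the identification $A^\circ_{M\times\{0\}}=A^\circ_M=A^\circ_{M\times\{1\}}$ (which uses the canonical product homeomorphism and hence the \emph{same} trivialization on both ends), we get $\tau^\circ(M\times[0,1])=\id_{A^\circ_M}$. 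This is exactly the statement that forcing the answer into the image of the projector $p(t,t)$ repairs the defect in Axiom~(6). \textbf{I expect this axiom to be the only real obstacle}, and the key check is that one may compute $\tau(M\times[0,1])$ using a product triangulation extending the \emph{same} $T$ on both boundary copies --- which is legitimate precisely by Lemma~\ref{lele1}, the independence of $\tau(W)$ on the choice of CW-decomposition and $\orlin$-system.

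\textbf{Axiom (7).} Homotopy invariance (rel $\partial W$) of $\tau^\circ(W)$ is inherited directly from Lemma~\ref{lele1}: the homomorphism $\tau(W)$ depends only on the trivializations of the bases and the homotopy class of the map $W\to X$, hence so does its restriction $\tau^\circ(W)$; moreover (\ref{CDDD}) shows $\tau^\circ(W)$ does not depend on the chosen trivializations $t_0,t_1$ either. Assembling Axioms (1)--(7) completes the proof that $(A^\circ,\tau^\circ)$ is an $X$-HQFT.
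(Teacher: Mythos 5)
Your proposal is correct and fleshes out exactly what the paper leaves to the reader: the paper's own proof of Theorem~\ref{t24} is the single sentence ``This theorem follows from the results of Section~\ref{statesu}.'' Your axiom-by-axiom verification, in which everything except Axiom~(6) is inherited from the pseudo-HQFT $(A,\tau)$ through Lemmas~\ref{lele1}, \ref{lele2} and the diagram~(\ref{CDDD}), while Axiom~(6) is repaired precisely by passing to the image of the projector $p(t,t)$, is the intended argument. One small caution: the parenthetical appeal to Lemma~\ref{l1908} inside your Axiom~(2) verification is both unnecessary and, as written, circular, since the second claim of that lemma is proved \emph{using} Axioms~(2) and~(6) of an HQFT and thus cannot be invoked before $(A^\circ,\tau^\circ)$ is known to satisfy them; the independence of $\tau^\circ(W')$ from the choice of trivialization, which is all you actually need at that point, already follows from~(\ref{CDDD}). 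Removing that sentence leaves a clean proof.
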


 This theorem follows from the results  of Section
\ref{statesu}. To stress the role of   $B$, we shall   write
$A^\circ_B$ for $A^\circ$ and $\tau^\circ_B$ for $\tau^\circ$. If
$W$ is a closed $X$-surface, then $\tau^\circ_B (W) =\tau_B(W)$.

The vector space $A^\circ_M$ associated with a connected $X$-curve
$M$ may be described in terms of the $G$-algebra $B$ as follows. The
$X$-curve $M$ yields a loop in $(X, x)$   representing an element
$\alpha=\alpha(M)$ of $G=\pi_1(X,x)$. The $X$-curve $M$ admits a
canonical trivialization $t $ having only one vertex (located at the
base point of~$M$)  and   one edge. Then
 $A_{(M,t)}=B_{\alpha}$ and $A^\circ_M$ is the image of
the projector $P_M:B_{\alpha}\to   B_{\alpha} $.  We shall compute
$P_M$ in   Section \ref{thegcenterb}. Note that each $\gamma\in
B_{\alpha}$ yields a vector $[\gamma]=P_M(\gamma)$ in  $ A^\circ_M
$.

\section {Reduction  of Theorem    \ref{t2} to a lemma}\label{sectione23}

 Consider  a  group epimorphism  $q:G'\to G$ with finite kernel $\Gamma$.
The   biangular $G$-algebra $B=K[G']$ derived from $q$ in
     Section \ref{BGa} determines a  2-dimensional HQFT
     $(A^{\circ}=A^\circ_B, \tau^\circ=\tau^\circ_B)$ with target
     $X=K(G,1)$.
  We   compute $(A^{\circ} , \tau^\circ )$  in two different
  ways
  and deduce Theorem    \ref{t2}.

\subsection  {Lemmas}\label{twolemmas}
 Let $W $ be a compact connected oriented surface
with $m \geq 0$ pointed boundary components $C_1,..., C_m$ endowed
with orientation induced by that of~$W$. For $k=1, \ldots , m$, fix
an embedded  path $c_k:[0,1]\to W$ leading from a base point $w\in
\Int\,  W$
  to  the base point  of $  C_k $.
We  assume that these $m$ paths  meet only at~$w$, and set $c=\cup_k
\, c_k ([0,1])\subset W$. Transporting $ C_k $  along $ c_k $, we
obtain  a loop in $W$ based at $w$ and representing some $x_k\in
\pi=\pi_1(W,w)$.

Let $X=K(G,1)$ with base point $x$. Fix a  homomorphism $g:\pi\to G$
and   consider    a map $\widetilde g:(W, c)\to (X,x)$    such that
$\widetilde g_\#=g:  \pi\to G=\pi_1(X,x)$.
 For
  $k=1, \ldots , m$,  the
  $X$-curve
   $(C_k, \widetilde g\vert_{C_k})$ represents $g(x_k)\in G$.

Fix a set $\gamma=\{\gamma_1,..., \gamma_m\}\subset G'$ such that
$\gamma_k\in  q^{-1}(g(x_k))$ for all $k$. By Section \ref{statesu++}, the vector $\gamma_k\in   B_{g(x_k)}$
 projects to a vector
  $[\gamma_k]\in A^\circ_{C_k}$.
 Set
 $$[\gamma]= \otimes_{k=1}^m [\gamma_k] \in A^\circ_{\partial
 W}= \otimes_{k=1}^m A^\circ_{C_k}  \, .$$
 If $m=0$, then $\gamma=\emptyset$ and by definition
 $[\gamma]=1_K\in K=A^\circ_{\emptyset}=A^\circ_{\partial
 W}$.

\begin{lemma}\label{firstkeyl}
The homomorphism  $\tau^\circ(-W,\widetilde g ):A^\circ_{\partial
W}\to K$ associated with the $X$-cobordism $(-W, \partial W,
\emptyset, \widetilde g)$ is computed  on $[\gamma]$   by
$$\tau^\circ(-W,\widetilde g) ([\gamma])=\vert \Gamma\vert^{\chi(W)-1}\, \times \, \vert {\mathcal S}_* (g,
\gamma)\vert  \, ,$$
where  ${\mathcal S}_*(g, \gamma)$ is  the (finite)  set of all
pairs (a homomorphism $g':\pi\to G'$, a family $\{a_k\in
\Gamma\}_{k=1}^m$)
 such that  $qg'=g$ and $
g'(x_k) = a_k \gamma_k a_k^{-1}$ for all $k$).
\end{lemma}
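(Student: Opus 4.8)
The plan is to evaluate $\tau^\circ(-W,\widetilde g)([\gamma])$ by running the state sum that defines the pseudo‑HQFT $(A=A_B,\tau=\tau_B)$ attached to the biangular $G$-algebra $B=K[G']$ of Section~\ref{BGa} (cf.\ Section~\ref{statesu}), for a well‑chosen CW‑decomposition of $W$. First I would pass from $\tau^\circ$ back to the pseudo‑HQFT. Equip each $C_k$ with its canonical trivialization $t_k$ (one vertex at the base point, one edge), so that $A_{(C_k,t_k)}=B_{g(x_k)}$ and $[\gamma_k]=P_{C_k}(\gamma_k)$, whence $[\gamma]=\bigl(\otimes_k P_{C_k}\bigr)\bigl(\otimes_k\gamma_k\bigr)$. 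Since $\tau^\circ(-W,\widetilde g)$ is by definition the restriction of $\tau(-W,\widetilde g):A_{(\partial W,\,\otimes_k t_k)}\to K$ to the subspace $A^\circ_{\partial W}$, and since the collar decomposition of $(-W,\partial W,\emptyset)$ gives $\tau(-W,\widetilde g)=\tau(-W,\widetilde g)\circ\bigl(\otimes_k P_{C_k}\bigr)$ (this is the relation $\tau(W)=P_{M_1}\tau(W)P_{M_0}$ preceding diagram~(\ref{CDDD}), together with Lemma~\ref{lele2}), one gets $\tau^\circ(-W,\widetilde g)([\gamma])=\tau(-W,\widetilde g)\bigl(\otimes_{k=1}^m\gamma_k\bigr)$, and it remains to compute the right‑hand side from the definition in Section~\ref{statesu}.

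Next I would choose the combinatorial data. Let $h$ be the genus of $W$ and fix standard generators $\alpha_1,\beta_1,\dots,\alpha_h,\beta_h,x_1,\dots,x_m$ of $\pi=\pi_1(W,w)$ subject to the single relation $\prod_i[\alpha_i,\beta_i]\prod_k x_k=1$. Realize them by a CW‑decomposition $T$ of $W$ with exactly one $2$-cell $F$, whose interior $1$-cells are the loops $\alpha_i,\beta_i$ based at $w$ together with the arcs $c_k$, and whose restriction to each $C_k$ is $t_k$ (if regularity in the sense of Section~\ref{Aps1} forces a subdivision, perform it; by Lemma~\ref{lele1} this changes neither $\tau(-W)$ nor, after cancelling powers of $|\Gamma|$, the computation below). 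On $T$ take the $G$-system $g$ with $g_{\alpha_i}=g(\alpha_i)$, $g_{\beta_i}=g(\beta_i)$, $g_{c_k}=1$, and $g_{e_k}=g(x_k)$ on the $1$-cell $e_k$ of $C_k$; the value $g_{c_k}=1$ is admissible because $\widetilde g$ is constant on $c=\bigcup_k c_k$, and the single face relation of $T$ holds because it is the image under $g$ of the surface relation above. Then $g$ represents the homotopy class of $\widetilde g$.

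Then I would run the state sum. For $B=K[G']$ one has $\eta_B(z,1_B)=|\Gamma|\,c_1$, where $c_1$ is the coefficient of $1$ in $z$, and $b_\alpha=|\Gamma|^{-1}\sum_{a\in q^{-1}(\alpha)}a\otimes a^{-1}$ (Section~\ref{BGa}). Hence the single face $F$ contributes one factor $|\Gamma|$ through its cyclic form, each of the $2h+m$ interior $1$-cells $\alpha_i,\beta_i,c_k$ contributes a factor $|\Gamma|^{-1}$ and a summation over its lift to $G'$ via $b_{g_e}$, and the lift of $c_k$ is forced into $q^{-1}(1)=\Gamma$. Contracting $d_g$ with $b_g$ and feeding $\gamma_k$ along the boundary flags, I expect to obtain
\[
\tau(-W,\widetilde g)\bigl(\otimes_k\gamma_k\bigr)=|\Gamma|^{\,1-2h-m}\cdot N=|\Gamma|^{\,\chi(W)-1}\cdot N,
\]
where $N$ counts the tuples $\bigl(\widetilde\alpha_i\in q^{-1}(g(\alpha_i)),\ \widetilde\beta_i\in q^{-1}(g(\beta_i)),\ a_k\in\Gamma\bigr)$ for which the assignment $\alpha_i\mapsto\widetilde\alpha_i$, $\beta_i\mapsto\widetilde\beta_i$, $x_k\mapsto a_k\gamma_k a_k^{-1}$ respects the defining relation of $\pi$, i.e.\ extends to a homomorphism $g':\pi\to G'$. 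Any such $g'$ automatically satisfies $qg'=g$ (check on generators, using $q(\gamma_k)=g(x_k)$), and there are finitely many of them because $\Gamma$ is finite; so $N=|{\mathcal S}_*(g,\gamma)|$, which gives the asserted formula.

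The step I expect to be the main obstacle is the conventions bookkeeping: verifying that for the cobordism $(-W,\partial W,\emptyset)$ the orientation and duality conventions of Section~\ref{statesu} attach $B_{g(x_k)}$ (rather than its $\eta_B$-dual) to the boundary flag of $e_k$, so that plugging in $\gamma_k$ itself—and not $\gamma_k^{-1}$—is correct; that the single face together with the $2h+m$ interior edges, after any regularizing subdivision, assemble the powers of $|\Gamma|$ into exactly $|\Gamma|^{\chi(W)-1}$; and that the chosen decomposition is genuinely admissible and restricts on $\partial W$ to the canonical trivializations. Once that is pinned down, the translation of the word equation into ``$g'$ is a homomorphism with $qg'=g$'' rests only on the one‑relator presentation of $\pi$.
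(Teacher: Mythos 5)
Your proof follows the same route as the paper: pick a one‑face CW‑decomposition of $W$ whose vertices are $w$ and the base points of the $C_k$ and whose edges include the $C_k$ and $c_k$, take the $G$‑system giving $1$ on $c_k$ and $g(x_k)$ on $C_k$, and run the $K[G']$ state sum to extract $|\Gamma|^{1-(\text{interior edges})}$ times a count of $G'$‑systems with boundary labels $\gamma_k$; the paper's proof is exactly this, with the Euler‑characteristic bookkeeping $\chi(W)=m+2-r$. The one place where you go beyond the paper's terse argument is the explicit reduction $\tau^\circ(-W,\widetilde g)([\gamma])=\tau(-W,\widetilde g)(\otimes_k\gamma_k)$ via the relation $\tau(-W)=\tau(-W)\circ\bigotimes_k P_{C_k}$, which the paper leaves implicit; this is a welcome clarification, not a departure.
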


 \begin{proof}  Take a CW-decomposition
 $T$ of $W$ having   the base points of $C_1,..., C_m$ and $w$ as the only
 vertices,
 having $C_1,..., C_m$ and
  $c_1,..., c_m$
 among edges, and having only one face. Then $\chi(W)=m+2-r $, where $r $ is the number
 of edges   of $T$.
 The map $\widetilde g$ is represented by a $G$-system on $T$   assigning $1$ to $c_k$ and  $g(x_k)$ to
 $C_k$ for all $k$. The computation of   $\eta_B$ and
  $\{b_\alpha\}_{\alpha\in G}$ at the end of Section \ref{BGa} implies that  $\tau^\circ(-W,\widetilde g) ([\gamma])= \vert
 \Gamma\vert \times \vert
 \Gamma\vert^{-(r -m)} \times$ (the number of $G'$-systems on $T$
 which are lifts of our  $G$-system and which assign $\gamma_k$
 to $C_k$ for all $k$). The latter number is equal to $\vert {\mathcal S}_* (g ,
\gamma)\vert$.  \end{proof}

 \begin{lemma}\label{firstkey1++} Suppose that the genus of $W$ is positive and let  $H=\langle x_1,..., x_m\rangle$ be the  subgroup of $ \pi$  (freely)  generated by $x_1,..., x_m$. 
 If $g:\pi\to G$ is an epimorphism whose restriction to $H$  is injective, then the set 
  $\gamma= \{\gamma_1,..., \gamma_m\}\subset G'$ is
 $q$-free in the sense of Section \ref{section2} and 
\begin{equation}\label{t3-++-34}\tau^\circ(-W, \widetilde g) ([\gamma])
  =
\sum_{\rho\in I_0 (q) } ( \dim \, \rho)^{\chi(W)} \, g^*(\zeta^{\rho, \gamma })  ([W, \partial W])\,
  \prod_{k=1}^m  t_\rho (\gamma_k)  \,  .\end{equation}
\end{lemma}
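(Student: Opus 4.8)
The strategy is to combine the ``counting'' formula of Lemma~\ref{firstkeyl} with a direct representation-theoretic evaluation of the right-hand side of \eqref{t3-++-34}, showing that both equal $\vert\Gamma\vert^{\chi(W)-1}\vert{\mathcal S}_*(g,\gamma)\vert$. First I would dispose of the $q$-freeness claim: since $g\vert_H$ is injective and $g$ is surjective, $g(x_1),\dots,g(x_m)$ freely generate a free subgroup of $G$ of rank $m$; picking $\gamma_k\in q^{-1}(g(x_k))$ and using that $q$ carries $\langle\gamma\rangle$ onto $\langle g(\gamma)\rangle$, no nontrivial reduced word in the $\gamma_k$ can lie in $\Gamma=\ker q$, so $\langle\gamma\rangle$ is free of rank $m$ and $q\vert_{\langle\gamma\rangle}$ is injective. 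This is the setting of Section~\ref{e23-9}, so $\zeta^{\rho,\gamma}\in H^2(G,H';K^*)$ is defined, where $H'=\langle g(\gamma)\rangle$, and $g^*(\zeta^{\rho,\gamma})\in H^2(\pi,H;K^*)=H^2(W,\partial W;K^*)$ makes sense.

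The core computation expresses $\vert{\mathcal S}_*(g,\gamma)\vert$ through the regular representation of $\Gamma$ decomposed into irreducibles. The plan is to count, for each homomorphism $g':\pi\to G'$ with $qg'=g$, the number of tuples $(a_k)\in\Gamma^m$ with $g'(x_k)=a_k\gamma_k a_k^{-1}$; and then to count the $g'$ themselves. I would present $\pi$ by generators $x_1,\dots,x_m$ (the boundary loops) together with genus generators $u_1,v_1,\dots,u_h,v_h$ and the single relation $\prod[u_i,v_i]\prod x_k=1$. A lift $g'$ assigns to each $x_k$ an element of $q^{-1}(g(x_k))=\gamma_k\Gamma$, and to each $u_i,v_i$ an element of $q^{-1}(g(u_i))$, $q^{-1}(g(v_i))$, subject to the relation, which (after dividing out) becomes an equation in $\Gamma$. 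Using the classical Frobenius-type count — the number of solutions in a finite group to $\prod[\alpha_i,\beta_i]=c$ weighted through $\sum_\rho (\dim\rho)^{\chi}\,\chi_\rho(c)/\vert\Gamma\vert^{\cdots}$ — this turns $\vert{\mathcal S}_*(g,\gamma)\vert$ into a sum over $\Irr(\Gamma)$. The conjugation action of $G$ on $\Irr(\Gamma)$ enters because for $g'$ to exist with the prescribed behaviour the relevant character must be $G$-invariant, i.e. $\rho\in I_0(q)$; for $\rho\notin I_0(q)$ the contribution cancels. The trace factors $\prod_k t_\rho(\gamma_k)$ and the conjugating matrices $M_\alpha$ appear precisely when one tracks the boundary data $a_k\gamma_k a_k^{-1}$: summing $\chi_\rho(a_k\gamma_k a_k^{-1})$ over $a_k\in\Gamma$ produces $\vert\Gamma\vert/\dim\rho$ times a trace that is nonzero exactly when $\gamma_k$ is $\rho$-adequate, giving the factor $t_\rho(\gamma_k)$, while the normalized conjugating matrices $\nu(\gamma_k)$ assemble into the cocycle defining $\zeta^{\rho,\gamma}$.

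The remaining step is to identify the cohomological factor: I would check that evaluating $g^*(\zeta^{\rho,\gamma})$ on $[W,\partial W]$, computed via the explicit cocycle from Section~\ref{e23-9} on the one-face CW-structure $T$ of Lemma~\ref{firstkeyl}, reproduces exactly the scalar ambiguity by which the naive count over lifts $g'$ differs from the ``straightened'' count using $\overline\rho$; i.e. the product of the $\zeta_{\alpha,\beta}$'s around the single face of $T$ is $g^*(\zeta^{\rho,\gamma})([W,\partial W])$, by the very definition of evaluating a $2$-cocycle on the fundamental class of a surface with one $2$-cell. Assembling: $\vert{\mathcal S}_*(g,\gamma)\vert=\vert\Gamma\vert^{1-\chi(W)}\sum_{\rho\in I_0(q)}(\dim\rho)^{\chi(W)}\,g^*(\zeta^{\rho,\gamma})([W,\partial W])\prod_k t_\rho(\gamma_k)$, and multiplying by $\vert\Gamma\vert^{\chi(W)-1}$ via Lemma~\ref{firstkeyl} gives \eqref{t3-++-34}.

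The main obstacle I anticipate is the bookkeeping of the scalar cocycle $\{\zeta_{\alpha,\beta}\}$: one must verify that the Frobenius count done with the \emph{projective} representation $\overline\rho$ (which satisfies $\overline\rho(ab)=\zeta_{q(a),q(b)}\overline\rho(a)\overline\rho(b)$ by \eqref{eeq2.8}) yields a correction factor that is precisely $g^*(\zeta^{\rho,\gamma})([W,\partial W])$ and not some coboundary-shifted variant — i.e. that the normalizations $M_1=E_n$, $\widetilde 1=1$, $\Tr\nu(\gamma_k)=1$, and the choice of lifts matching $\langle\gamma\rangle$, are mutually consistent with the one-face cell structure. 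This is a careful but essentially formal matching of the Frobenius character-sum identity against the definition of $\zeta^{\rho,\gamma}$; once the dictionary is set up, the cancellation of $\rho\notin I_0(q)$ and the emergence of $t_\rho(\gamma_k)$ are immediate from Schur's lemma.
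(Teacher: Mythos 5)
Your plan takes a genuinely different route from the paper's. The paper proves Lemma~\ref{firstkey1++} by a purely HQFT-theoretic argument: it computes the $G$-center $L=\psi_1(B)$ of $B=K[G']$, identifies the basic idempotents $\Bas(L)$ with $\Irr(\Gamma)$ equivariantly, decomposes the HQFT $(A^\circ_B,\tau^\circ_B)$ into simple pieces via Lemmas~\ref{splitd} and~\ref{lelecrocro2}, uses the transfer Lemma~\ref{trac} to kill the contributions of the orbits (equivalently, the $\rho\notin I_0(q)$, since $g$ is surjective and the transfer of a $K(H,1)$-HQFT vanishes on maps not lifting to the covering), and finally matches the cohomology classes $\nabla_i^\gamma$ from Lemma~\ref{lelecrocro2} with $\zeta^{\rho_i,\gamma}$ by an explicit computation with $\overline\rho^+$. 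You instead propose to prove (\ref{t3-++-34}) by first deriving (\ref{t3-++-}) directly (a twisted Frobenius--Mednykh character count of $\vert{\mathcal S}_*(g,\gamma)\vert$ over a presentation of $\pi$) and then dividing by the factor from Lemma~\ref{firstkeyl}. This is legitimate in principle, and it is in fact exactly the ``purely algebraic proof generalizing the original methods of Frobenius and Mednykh'' that the author flags as plausible and defers to another paper.

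The difficulty is that you are treating the heart of that algebraic proof as a bookkeeping formality, whereas it is the genuine content. Three steps are asserted but not established, and none is ``immediate from Schur's lemma'': (i) the claim that the contribution of $\rho\notin I_0(q)$ cancels — the reason is that the averaging operator $\psi_1$ annihilates the $\rho$-isotypic block of $B_1$ when the orbit of $\rho$ under $G$ is nontrivial, which in the character-sum picture corresponds to a vanishing of a sum of conjugated characters over lifts of the genus generators, and this requires an argument; (ii) the appearance of the exact power $(\dim\rho)^{\chi(W)}$ rather than the classical Frobenius exponent $2-2h$ — here the $m$ boundary sums each contribute $\vert\Gamma\vert/\dim\rho$ and the $h$ commutator pairs each contribute $(\vert\Gamma\vert/\dim\rho)^2$, but these must be combined with the $\vert\Gamma\vert^{\chi(W)-1}$ from Lemma~\ref{firstkeyl}, and one must track how the normalization $\Tr\,\nu(\gamma_k)=1$ modifies the sum $\sum_{a_k}\chi_\rho(a_k\gamma_k a_k^{-1})$ to produce exactly $\vert\Gamma\vert\, t_\rho(\gamma_k)$; (iii) the identification of the residual scalar with $g^*(\zeta^{\rho,\gamma})([W,\partial W])$ — you would need to show that the product of cocycle values $\zeta_{q(a),q(b)}$ accumulated while collapsing the single-relator word $\prod[u_i,v_i]\prod x_k$ via (\ref{eeq2.8}), with lifts chosen compatibly with $\langle\gamma\rangle$ on $H$, is independent of the auxiliary choices and equals the evaluation of the relative class on the fundamental class. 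None of these is wrong, but each requires real work, and the paper sidesteps them entirely by relying on the classification of simple crossed $G$-algebras and the transfer construction. As a proof, your proposal is incomplete at precisely the step where the paper's machinery does its heavy lifting.
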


The first claim of the lemma  
follows from the equality  $\langle q(\gamma)\rangle=g(H)$. Note that   $\zeta^{\rho, \gamma }\in H^2(G, g(H);K^*)$ and  $g^*(\zeta^{\rho, \gamma }) \in H^2(\pi,
H;K^*)$. We have
  $$H_2(\pi,H ;\ZZ)=  H_2(W, c\cup \partial W ;\ZZ)=  H_2(W, \partial W ;\ZZ)=\ZZ\cdot [W, \partial
  W]$$  so that we can evaluate $g^*(\zeta^{\rho, \gamma }) $ on $[W, \partial
  W]$. For the definition of $I_0(q)$, see Section \ref{e21}.

   Lemma \ref{firstkey1++}  will be proven in Section \ref{Tcoatb---+}. Combining Lemmas  \ref{firstkeyl}
   and \ref{firstkey1++}, we obtain
that for any $q$-free system $\gamma=\{\gamma_k\in
q^{-1}(g(x_k))\}_{k=1}^m$,
\begin{equation}\label{t3-++-}\vert {\mathcal S}_* (g, \gamma)\vert
  =  \vert \Gamma\vert
\sum_{\rho\in I_0 (q) } (\vert \Gamma\vert/\dim \, \rho)^{-\chi(W)} \, g^*(\zeta^{\rho, \gamma })  ([W, \partial W])\,
  \prod_{k=1}^m  t_\rho (\gamma_k)  \,  .\end{equation}

 \subsection{Proof of Theorem  \ref{t2}}\label{e24zzx}
We substitute (\ref{idi}) and $g=\id:\pi\to \pi $  in
(\ref{t3-++-}).
  Since the genus of $W$ is positive,
  $x_1,..., x_m \in  \pi $ generate a free group  of rank $m$. Therefore the
set
  $\gamma\subset \pi'$ constructed before the statement of  Theorem  \ref{t1} is     $q$-free.
Theorem  \ref{t2} now  follows from  (\ref{t3-++-}) provided we show
that
  $\vert {\mathcal S} (p, s_\partial)\vert= \vert {\mathcal S}_* (\id, \gamma)\vert$.

A pointed section $s $ of $p:E\to W$ extending $s_\partial:\partial
W\to E$ induces a homomorphism $s_\#:\pi \to \pi'$ such that
$qs_\#=\id$.  For $k=1,..., m$,  the path $ \widetilde c_k:[0,1]\to
E$ used in the definition of $\gamma_k $ in Section \ref{int} and
  the path  $s  c_k$ have the same endpoints and both project to $c_k
$. Let $a_k\in \Gamma=\Phi$ be represented by the  loop $  (s c_k)
\widetilde c_k^{-1}:[0,1]\to E$.
  Clearly,  $ (s_\#, \{a_k
\}_{k=1}^m) \in {\mathcal S}_* (\id, \gamma)$. The resulting mapping
$ {\mathcal S} (p, s_\partial) \to {\mathcal S}_* (\id, \gamma)$ is
bijective, as can be easily shown using the same CW-decomposition of
$W$  as in the proof of Lemma \ref{firstkeyl}. The  injectivity
results from the identification of   bubble equivalent sections  in
${\mathcal S} (p, s_\partial)$. The surjectivity is obtained by
 constructing an appropriate section over the 1-skeleton of $W$
and extending it  over the face using the injectivity of the
inclusion homomorphism $\pi_1(F,e)\to \pi'$, where $F=p^{-1}(w)$.
Thus, $\vert {\mathcal S} (p, s_\partial)\vert= \vert {\mathcal S}_*
(\id, \gamma)\vert$.

\section{Crossed $G$-algebras}\label{crocro}

We introduce    crossed  $G$-algebras which will be our main tools
in the study of
  2-dimensional HQFTs with target $K(G,1)$.

 \subsection{Basics}\label{crocroa}   We   use
 terminology of  Section \ref{BGa}.
   A {\it crossed $G$-algebra} is a triple (a  $\orlin$-algebra  $L=\oplus_{\alpha\in \orlin}\,
   L_\alpha$, an inner product $\eta$ on $L$, a homomorphism
   $\alpha\mapsto \varphi_\alpha $ from  $G$ to the group of algebra
   automorphisms of $L$) such that   for all $\alpha, \beta \in
   \orlin$,

(1) $\varphi_{\alpha} (L_{\beta})\subset
   L_{\alpha\beta\alpha^{-1}}$ and $\varphi_{\alpha}\vert_{L_{\alpha}}=\id$;

   (2)  for any $a\in L_\alpha, b\in L $, we have
           $\varphi_{\alpha}(b)a=ab$;

     (3)   $\eta   (\varphi_{\alpha} (a),
     \varphi_{\alpha}(b))=\eta(a,b)$ for all $a,b\in L$;

   (4)  for any
   $c\in   L_{\alpha\beta \alpha^{-1}
   \beta^{-1}}$,  the homomorphism $\mu_c:L\to L, x\mapsto
   cx$ satisfies
\begin{equation}\label{traces} \Tr\, (\mu_c\, \varphi_{\beta}:L_{\alpha}\to L_{\alpha})=
   \Tr\, ( \varphi_{\alpha^{-1}} \mu_c:L_{\beta}\to L_{\beta})\, .
\end{equation}

      Axiom  (2)
    implies that    $L_1\subset L$  lies in the center  of~$L$.
 In particular, $L_1$ is a commutative associative
    $K$-algebra with unit.  The
     group $\orlin$ acts on $L_1$ by
    algebra automorphisms.  This action
    determines the dimensions of all $L_\alpha$:    applying
   (4)   to $\beta=1$  and
    $c=1_L\in L_1$,  we obtain
    $$ \Dim\,  L_{\alpha} =  \Tr\, (  \id:L_{\alpha}\to L_{\alpha})
    =\Tr\, (  \varphi_{1}:L_{\alpha}\to L_{\alpha})
    =\Tr\, (  \varphi_{\alpha^{-1}} :L_{1}\to L_{1})\, .$$

Isomorphisms of crossed $G$-algebras are algebra isomorphisms
preserving the $G$-grading and the inner product, and commuting with
the action of $G$.

 The {\it
direct sum}  $L\oplus L'$ of two crossed $\orlin$-algebras $L,L'$ is
the crossed  $\orlin$-algebra  defined by $(L\oplus L')_{\alpha}=
L_{\alpha}\oplus  L'_{\alpha}$ for $\alpha\in \orlin$ with
coordinate-wise multiplication, inner product, and action of $G$.
For  $k\in K^*$, the {\it $k$-rescaling}  transforms a crossed
$\orlin$-algebra $(L,\eta, \varphi)$
  into the
    crossed  $\orlin$-algebra $(L,k\eta, \varphi)$.

 \subsection{Example}\label{exxama}
   Let
$\theta=\{\theta_{\alpha,\beta} \}_{\alpha,\beta\in \orlin}$
 be a normalized $K^*$-valued 2-cocycle of   $\orlin$.
The  cocycle identity  $ \theta_{\alpha,\beta}\,
\theta_{\alpha\beta,\gamma}=\theta_{\alpha,\beta\gamma}\, \theta_{
\beta,\gamma}  $ for   $\alpha,\beta, \gamma\in \orlin$ and the
normalization condition $\theta_{1,1}=1$ imply
  that    $\theta_{1,\alpha}=\theta_{\alpha,1}=1$ and
                             $\theta_{\alpha,\alpha^{-1}}=\theta_{\alpha^{-1}, \alpha}$ for all
                             $\alpha$.

 We
define  a  crossed $\orlin$-algebra $L=L^{\theta}$ as follows. For
$\alpha \in \orlin$, let $L_{\alpha}= K l_{\alpha}$   be the
one-dimensional vector space
  generated by a vector $l_{\alpha}$.
Multiplication in $L=\oplus_\alpha L_\alpha$ is defined by $
l_{\alpha} l_{\beta}= \theta_{\alpha,\beta} l_{\alpha\beta}$.  This
makes $L$ into an associative algebra with unit      $1_L=l_1\in
L_1$. The  inner product   on
 $L$ is   defined   by      $\eta(l_{\alpha}, l_{\alpha^{-1}})=
 \theta_{\alpha,\alpha^{-1}}$ for all $\alpha\in G$.  The action   of $\alpha\in G$
 is defined on the basis $\{l_\beta\}_\beta$ of $L$ by
$$\varphi_\alpha (l_\beta)=
  \theta_{\alpha,\alpha^{-1}}^{-1}\, \theta_{\alpha,\beta}\,
  \theta_{\alpha \beta, \alpha^{-1}} \, l_{\alpha \beta \alpha^{-1}}.$$
It is clear that $\varphi_\alpha$ is the only $K$-linear
automorphism of $L$ satisfying Axioms (1) and (2). The other axioms
of a crossed $G$-algebra are verified by direct
 computations using solely the cocycle identity and the normalization condition.

 Note that the isomorphism class of
  $L^{\theta}$ depends only on the  cohomology class of~$\theta$. For $k\in K^*$, denote $
L^{ \theta, k}$ the crossed $G$-algebra obtained from $ L^{
 \theta}$ by $k$-rescaling.   If $\theta=1$, then this yields  a
structure of a crossed
 $G$-algebra on the group ring $K[G]$.

   \subsection{Transfer}\label{transfcro}
Let $H$ be  a subgroup   of   $\orlin$ of finite index.  A
    crossed $H$-algebra $(L=\oplus_{\alpha\in H} L_\alpha,\eta, \varphi)$ gives rise  to a crossed
    $\orlin$-algebra
$(\widetilde L=\oplus_{\alpha\in \orlin} \widetilde
     L_\alpha, \widetilde \eta, \widetilde \varphi)$ called its {\it transfer} and defined as follows.
  Let $H\backslash \orlin$ be the set of right cosets of $H$ in $G$.   For
  each
     $i\in H\backslash \orlin$, fix a representative $\omega_i\in
   G$ so that $i=H\omega_i$. (It is convenient but not necessary
     to take $\omega_i=1 $ for  $i=H$.) For $\alpha\in \orlin$, set
     $$N(\alpha)= \{i\in H\backslash \orlin\, \vert \,
      \omega_i\,  \alpha \,  \omega_i^{-1} \in
     H\} \quad \quad
  {\text {and}} \quad \quad \widetilde  L_{\alpha}=\oplus_{  i  \in N(\alpha)} L_{\omega_i
     \alpha
     \omega_i^{-1}}\, .$$   In
     particular, if
     ${\alpha}$ is not conjugate  to   elements of
     $H$, then
     $\widetilde  L_{\alpha}=0$.

     We provide $\widetilde  L=\oplus_{\alpha}\widetilde  L_{\alpha}$ with
     coordinate-wise multiplication. Thus, for $\alpha,\beta \in \orlin$,  the multiplication $\widetilde  L_{\alpha}
     \times
     \widetilde  L_{\beta} \to \widetilde
     L_{\alpha\beta}$  restricted to $L_{\omega_i \alpha
     \omega_i^{-1}}\times
     L_{\omega_j \beta \omega_j^{-1}}$  is  $0$, if
     $i\neq j$, and is induced by multiplication   in $L$
     $$L_{\omega_i \alpha \omega_i^{-1}}\times L_{\omega_i \beta
     \omega_i^{-1}} \to
     L_{\omega_i \alpha \beta  \omega_i^{-1}},$$  if $i=j\in
     N(\alpha)\cap N(\beta)$.
    By definition, the algebra $\widetilde  L_{1} $
     is a direct sum of   $[\orlin:H]$ copies of $L_1$ numerated by the elements  of $H\backslash G$.
     The corresponding sum of     copies of   $1_L\in  L_{1}$ is the unit
     of $
     \widetilde  L_{1}$. The
  inner product
   $\widetilde
     \eta$ on $\widetilde  L
      $   is defined by
$$\widetilde  \eta\vert_{\widetilde  L_{\alpha}\otimes \widetilde  L_{\alpha^{-1}}}=
     \bigoplus_{i\in N(\alpha)=N(\alpha^{-1})}
     \eta\vert_{L_{\omega_i\alpha \omega_i^{-1}} \otimes L_{\omega_i
     \alpha^{-1}\omega_i^{-1}}} \, . $$

We now define   $\widetilde \varphi$. The group  $\orlin$ acts on
   $H\backslash \orlin$ by $  {\alpha} (j) = j\alpha^{-1}$ for  $\alpha\in
   \orlin$, $ j
   \in
   H\backslash \orlin$.   The equality $H \omega_{\alpha(j)} = H\omega_j
   \alpha^{-1}$ implies that  $\alpha_j=\omega_{\alpha(j)} \alpha \omega_j^{-1} \in
   H$. Note that the
     fixed point set of the  map
   $H\backslash \orlin \to H\backslash \orlin$, $ j \mapsto \alpha(j)$ is
   $N(\alpha)$.   Therefore for any $\beta \in G$, this map
sends   $N(\beta)$ bijectively  onto $ N(\alpha \beta
   \alpha^{-1})$.
For   $j\in N(\beta)$, consider
 the
    homomorphism \begin{equation}\label{hoho}\varphi_{\alpha_j}: L_{\omega_j
   \beta \omega_j^{-1}} \to L_{\alpha_j\omega_j
   \beta \omega_j^{-1}\alpha_j^{-1}}= L_{\omega_{\alpha(j)} \alpha \beta \alpha^{-1}
   \omega_{\alpha(j)}^{-1}}. \end{equation}
   We define
    $\widetilde  \varphi_{\alpha}:\widetilde  L_{\beta} \to
   \widetilde  L_{\alpha \beta \alpha^{-1}}$ to be the direct
   sum of these  homomorphisms   over all $j\in N(\beta)$.
The identity $(\alpha \alpha')_j={\alpha}_{{\alpha'}(j)}  \alpha'_j$
   for any  $ \alpha, \alpha'\in \orlin$  implies that
   $\widetilde  \varphi_{\alpha\alpha'}=\widetilde  \varphi_{\alpha}\, \widetilde
   \varphi_{\alpha'}$. Hence
    $\widetilde  \varphi$ is an action of $\orlin$  on $\widetilde  L$.

     \begin{lemma}\label{l2-jiiif}
      The triple $( \widetilde  L, \widetilde \eta, \widetilde \varphi)$ is a crossed
      $G$-algebra. \end{lemma}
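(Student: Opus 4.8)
The plan is to verify the four axioms of a crossed $G$-algebra for $(\widetilde L, \widetilde\eta, \widetilde\varphi)$ directly, reducing each to the corresponding axiom for $(L,\eta,\varphi)$ by working componentwise along the decomposition $\widetilde L_\alpha=\oplus_{i\in N(\alpha)}L_{\omega_i\alpha\omega_i^{-1}}$. First I would record the bookkeeping facts that make the reduction mechanical: the map $j\mapsto\alpha(j)=j\alpha^{-1}$ carries $N(\beta)$ bijectively onto $N(\alpha\beta\alpha^{-1})$ with $\alpha_j=\omega_{\alpha(j)}\alpha\omega_j^{-1}\in H$; the cocycle-type identity $(\alpha\alpha')_j=\alpha_{\alpha'(j)}\alpha'_j$ (already noted in the text, giving that $\widetilde\varphi$ is a $G$-action); and the obvious fact that multiplication, $\widetilde\eta$, and $\widetilde\varphi$ all preserve the index $i\in H\backslash G$, so there is no mixing between the direct summands except as dictated by the permutation action on cosets. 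Since $\widetilde L_\alpha\cdot\widetilde L_\beta\subset\widetilde L_{\alpha\beta}$ and the unit of $\widetilde L_1$ are built into the definition, the associative $G$-graded algebra structure needs nothing beyond that of $L$.

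Next I would check Axioms (1)–(3). For Axiom (1), $\widetilde\varphi_\alpha(\widetilde L_\beta)\subset\widetilde L_{\alpha\beta\alpha^{-1}}$ is immediate from \eqref{hoho} and the bijection $N(\beta)\cong N(\alpha\beta\alpha^{-1})$; the relation $\widetilde\varphi_\alpha|_{\widetilde L_\alpha}=\id$ follows because for $j\in N(\alpha)$ we have $\alpha(j)=j$, so $\alpha_j=\omega_j\alpha\omega_j^{-1}$ and \eqref{hoho} is the map $\varphi_{\omega_j\alpha\omega_j^{-1}}$ on $L_{\omega_j\alpha\omega_j^{-1}}$, which is the identity by Axiom (1) for $L$. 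Axiom (2) — that $\widetilde\varphi_\alpha(b)a=ab$ for $a\in\widetilde L_\alpha$, $b\in\widetilde L$ — reduces, after projecting to the summand with index $i\in N(\alpha)$ (the only one on which $a$ acts nontrivially), to $\varphi_{\omega_i\alpha\omega_i^{-1}}(b_i)a_i=a_ib_i$ in $L$, which is Axiom (2) for $L$ with the element $\omega_i\alpha\omega_i^{-1}\in H$. Axiom (3) is similar: $\widetilde\eta$ is block-diagonal in $i$, $\widetilde\varphi_\alpha$ permutes the blocks according to $j\mapsto\alpha(j)$, and on each block it acts via some $\varphi_{\alpha_j}$, so invariance of $\widetilde\eta$ under $\widetilde\varphi_\alpha$ follows block-by-block from Axiom (3) for $\eta$ together with the observation that $\widetilde\eta$ pairs the $i$-block of $\widetilde L_\alpha$ with the $i$-block of $\widetilde L_{\alpha^{-1}}$ (using $N(\alpha)=N(\alpha^{-1})$).

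The main obstacle will be Axiom (4), the trace identity \eqref{traces}, because it couples two different gradings and the permutation action can genuinely rearrange summands. The plan is to fix $\alpha,\beta\in G$ and $c\in\widetilde L_{\alpha\beta\alpha^{-1}\beta^{-1}}$, write $c=\sum_{i\in N(\alpha\beta\alpha^{-1}\beta^{-1})}c_i$ with $c_i\in L_{\omega_i(\alpha\beta\alpha^{-1}\beta^{-1})\omega_i^{-1}}$, and compute $\Tr(\mu_c\widetilde\varphi_\beta:\widetilde L_\alpha\to\widetilde L_\alpha)$ as a sum over $i$ of traces on the summands $L_{\omega_i\alpha\omega_i^{-1}}$. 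Here one must check that only indices $i$ fixed by the relevant coset permutations contribute (so that the operator sends a summand back to itself, as a trace requires), and identify, for such $i$, the resulting endomorphism of $L_{\omega_i\alpha\omega_i^{-1}}$ as $\mu_{c_i}\varphi_{\omega_i\beta\omega_i^{-1}}$ with $c_i\in L_{\omega_i\alpha\omega_i^{-1}\cdot\omega_i\beta\omega_i^{-1}\cdot(\omega_i\alpha\omega_i^{-1})^{-1}(\omega_i\beta\omega_i^{-1})^{-1}}$. Then \eqref{traces} for $L$, applied with the conjugated elements $\omega_i\alpha\omega_i^{-1},\omega_i\beta\omega_i^{-1}\in H$, gives equality with $\Tr(\varphi_{(\omega_i\alpha\omega_i^{-1})^{-1}}\mu_{c_i}:L_{\omega_i\beta\omega_i^{-1}}\to L_{\omega_i\beta\omega_i^{-1}})$, and summing over $i$ reassembles $\Tr(\widetilde\varphi_{\alpha^{-1}}\mu_c:\widetilde L_\beta\to\widetilde L_\beta)$ after re-indexing the cosets. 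The delicate point is this re-indexing — matching the set of indices contributing on the $\widetilde L_\alpha$ side with those on the $\widetilde L_\beta$ side — which is handled by the identity $(\alpha\alpha')_j=\alpha_{\alpha'(j)}\alpha'_j$ and the fact that $N(\alpha)\cap N(\beta)$ governs both; I would expect this to be the only place where a careful, non-formal argument is needed, everything else being a routine transcription of the axioms for $L$.
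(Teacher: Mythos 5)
Your proposal matches the paper's proof in strategy and in every essential step: verify each axiom componentwise along the decomposition $\widetilde L_\alpha=\oplus_{i\in N(\alpha)}L_{\omega_i\alpha\omega_i^{-1}}$, using that for $i\in N(\alpha)$ one has $\alpha_i=\omega_i\alpha\omega_i^{-1}$, that Axiom (2) is nontrivial only on matching indices, and that in Axiom (4) only $i\in N(\alpha)\cap N(\beta)$ contribute, after which the block maps become $\mu_{c_i}\varphi_{\omega_i\beta\omega_i^{-1}}$ and $\varphi_{\omega_i\alpha^{-1}\omega_i^{-1}}\mu_{c_i}$ and the equality is Axiom (4) for $L$ conjugated by $\omega_i$. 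The only cosmetic difference is that the paper invokes linearity in $c$ to pass directly to a single summand rather than carrying a sum $c=\sum_i c_i$ through the trace computation.
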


     \begin{proof}  That $\widetilde \eta$ is an inner product preserved by $\widetilde \varphi$ follows
     directly from the definitions and properties of $L$.
    We check
      Axioms (1), (2), (4) of Section \ref{crocroa}.

       For   $j\in N(\alpha)$, we have  $\alpha(j)=j$ and
       $ {\alpha}_j=\omega_j \alpha \omega_j^{-1}$. By Axiom (1) for $L$, the
       homomorphism
       (\ref{hoho})  with  $\beta=\alpha $ is the identity. This yields   Axiom
       (1)
       for~$\widetilde  L $.

  Let
         $a\in L_{\omega_i \alpha \omega_i^{-1}}\subset \widetilde  L_{\alpha}, b\in
         L_{\omega_j \beta \omega_j^{-1}}
         \subset \widetilde  L_{\beta}$
          with
         $i\in N(\alpha), j\in N(\beta)$.  Then
         $$\widetilde  \varphi_{\alpha} (b)=\varphi_{{\alpha}_j}(b)
         \in
         L_{\omega_{{\alpha}(j)}  { \alpha} \beta \alpha^{-1}
           \omega_{{ \alpha}(j)}^{-1}}.$$
 If   $i\neq j$
         then ${ \alpha}(j)\neq { \alpha}(i)=i$
        and
         $\widetilde  \varphi_{\alpha} (b) a=0=ab$.
      If $i=j$, then
         ${ \alpha}_j=\alpha_i=\omega_i { \alpha} \omega_i^{-1}$.
         By Axiom    (2)    for $L$,
         $$\widetilde  \varphi_{\alpha} (b) a= \varphi_{\alpha_j} (b)
         a
          =\varphi_{\omega_i {
         \alpha}
         \omega_i^{-1}} (b) a=ab.$$

  We now check  Formula (\ref{traces})  with $L, \varphi$ replaced by $\widetilde L, \widetilde \varphi$. Since both sides
   are linear functions of $c$,
    it suffices to treat the case where $$c\in
    L_{\omega_i\alpha\beta \alpha^{-1} \beta^{-1}\omega_i^{-1}} \subset
    \widetilde  L_{\alpha\beta \alpha^{-1}
    \beta^{-1}}$$
    with $i\in N( {\alpha\beta \alpha^{-1}
    \beta^{-1}})$. A direct  application of   definitions shows that both
    sides of
    the desired formula are equal to $0$ unless $i\in N(\alpha)\cap N(\beta)$.
    If $i\in N(\alpha)\cap N(\beta)$ then
    the trace of $ \mu_c\, \widetilde \varphi_{\beta}:\widetilde  L_{\alpha}\to \widetilde
    L_{\alpha}
 $
  is equal to  the trace of the endomorphism
     $\mu_c\, \varphi_{\omega_i\beta \omega_i^{-1}}$ of $\widetilde
    L_{\omega_i\alpha
    \omega_i^{-1}}$. The trace of
    $  \widetilde \varphi_{\alpha^{-1}} \mu_c:\widetilde  L_{\beta}\to \widetilde
    L_{\beta} $
   is equal to  the trace of the endomorphism
    $\varphi_{\omega_i \alpha^{-1}\omega_i^{-1}} \mu_c$ of
    $\widetilde  L_{\omega_i\beta \omega_i^{-1}}$. The equality of these two
    traces
    follows from Axiom
  (4)  for   $L$. \end{proof}

It follows from Lemma \ref{trac} below that the isomorphism class of
    $(\widetilde  L, \widetilde
  \eta, \widetilde \varphi)$   does not depend on the choice of  the
  representatives $\{\omega_i\}_i$.   A direct algebraic proof is left to the reader as    an
  exercise.

 We can apply transfer to the crossed $H$-algebra $L^\theta$ derived from a   $K^*$-valued normalized 2-cocycle
$\theta $ on $H$.  The resulting crossed
  $\orlin$-algebra is
denoted    $ L^{\orlin, H,\theta}$. For $k\in K^*$, denote $
L^{\orlin, H,\theta, k}$ the crossed $G$-algebra obtained from $
L^{\orlin, H,\theta}$ by $k$-rescaling. It is clear that transfer
and  rescaling commute so that $ L^{\orlin, H,\theta, k}$ is the
transfer of the crossed $H$-algebra $ L^{\theta, k}$.

  \subsection{Semisimple   $G$-algebras}\label{sssns}
  A  crossed $\orlin$-algebra  $ L=\oplus_{\alpha\in \orlin}\,
  L_{\alpha} $
    is   {\it semisimple}  if
    $L_1$ is  a direct sum of
    several  mutually annihilating (and therefore mutually orthogonal)  copies of   $K$.  For instance, the
  crossed  $\orlin$-algebra $L^\theta$
   in Example \ref{exxama} is semisimple   since
  $L_1=K $. Direct sums, rescalings,
   and transfers of  semisimple crossed  group-algebras are
  semisimple.

  If a crossed $\orlin$-algebra  $L$ is semisimple, then $L_1$
   has a basis $I$
  such that $i j=\delta^i_j i$
       for all   $i,j\in I$, where $\delta^i_j$ is the Kronecker delta. Such a basis   is
       unique
       and denoted
  $\mid (L)$. Its elements   are called the {\it  basic
    idempotents}  of $L$. Algebra
    automorphisms  of $L_1 $ preserve  $\mid (L)$ set-wise. In particular, the
    action
     of $\orlin$ on~$L $ restricts to an action of $G$ on $\mid(L)$. We
 call $L$   {\it simple} if (it is semisimple and) the action of
     $G$ on $\mid (L)$ is transitive.
     The  crossed  $\orlin$-algebra  $L^\theta$
           in Example \ref{exxama} is simple because $\mid(L^\theta)$ is a one point set.
The
 crossed  $\orlin$-algebra   $ L=L^{\orlin, H,\theta, k}$ constructed at the end
 of
     Section \ref{transfcro} is simple
  because $   L_{1} $ is a direct sum of
  copies of   $ L^\theta_{1}=K$ numerated by elements of
     $H\backslash \orlin$ and the action of $ \orlin$ on $
     L_{1} $ permutes these copies of $K$ via the natural (transitive)
     action of $\orlin$ on $H\backslash G$.

 Consider again a semisimple crossed $G$-algebra $L$ and set $I=\mid(L)$.
  The equality $1_L=\sum_{i\in I} i$      and the fact
 that
   $I$  lies in the center of $L$
     imply that $L$ splits as a direct sum of   mutually annihilating
subalgebras $\{iL\}_{i\in I}$:
     $$L=\bigoplus_{i\in I} iL\quad \quad {\text {where}}\quad
     \quad iL=Li=\bigoplus_{\alpha\in \orlin} iL_\alpha.
     $$ The  subalgebras $\{iL\}_{i\in I}$ of $L$  are
  permuted via the
     action
      of $\orlin$ on $L$.  Therefore   for any orbit  $\kappa\subset  I$ of the action  of
      $\orlin$ on $ I$, the vector space  $\kappa  L =\oplus_{i\in \kappa} \,
      iL$ is a $G$-invariant subalgebra of $L$.
      Restricting the action of $G$ and the inner product on $L$ to $\kappa L$ we
      transform the latter into a   simple  crossed $G$-algebra.
      We conclude that each semisimple crossed $G$-algebra
      canonically splits as a direct sum of  simple crossed
      $G$-algebras.

      We now  classify  simple crossed
$\orlin$-algebras.   Denote by $\mathcal A (\orlin)$ the set of
isomorphism classes of pairs (a   simple crossed $\orlin$-algebra $L
$, a basic idempotent $i \in \mid(L)$). (Two such pairs are
isomorphic if there is an isomorphism of the crossed $G$-algebras
preserving the distinguished basic idempotent).  The group $\orlin$
acts on $\mathcal A (\orlin)$ by $\alpha(L, i )= (L,
  \varphi_{\alpha} (i ))$ for $\alpha\in \orlin$. Let
$\mathcal B (\orlin)$ be the set of triples (a subgroup $H\subset
\orlin$ of finite index, a cohomology class $\theta\in H^2(H;K^*)$,
an element $k$ of $K^*$). The group $\orlin$ acts on $\mathcal B
(\orlin)$ by  $\alpha(H,\theta, k)= (\alpha H\alpha^{-1},
\alpha_*(\theta), k)$ where $\alpha\in \orlin$ and
$\alpha_*:H^2(H;K^*)\to H^2(\alpha H\alpha^{-1};K^*)$ is the
isomorphism induced by the conjugation by $\alpha$.

\begin{lemma}\label{lelecrocro2}  There is a canonical
$\orlin$-equivariant bijection $\mathcal A (\orlin)\to \mathcal B
(\orlin)$. \end{lemma}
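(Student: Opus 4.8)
The plan is to construct mutually inverse $\orlin$-equivariant maps $\Psi\colon\mathcal A(\orlin)\to\mathcal B(\orlin)$ and $\Phi\colon\mathcal B(\orlin)\to\mathcal A(\orlin)$. \emph{Construction of $\Psi$.} Let $(L,i)$ represent a class in $\mathcal A(\orlin)$, with $L=\oplus_\alpha L_\alpha$ a simple crossed $\orlin$-algebra and $i\in\mid(L)$. Let $H=H_i$ be the stabilizer of $i$ for the action of $\orlin$ on $\mid(L)$. Simplicity means this action is transitive, so $[\orlin:H]=|\mid(L)|=\dim_K L_1<\infty$. The crucial point is that $iL=\oplus_\alpha iL_\alpha$ satisfies $\dim_K iL_\alpha=1$ for $\alpha\in H$ and $iL_\alpha=0$ for $\alpha\notin H$: since $i$ is a central idempotent, $\mu_i$ restricts on $L_\alpha$ to the projection onto $iL_\alpha$, so Axiom~(4) of Section~\ref{crocroa} applied with $\beta=1$ and $c=i\in L_1$ gives
\[
\dim_K iL_\alpha=\Tr(\mu_i\colon L_\alpha\to L_\alpha)=\Tr(\varphi_{\alpha^{-1}}\mu_i\colon L_1\to L_1),
\]
and, as $\varphi_{\alpha^{-1}}$ permutes the basis $\mid(L)$ while $\mu_i$ projects $L_1$ onto $Ki$, the right side is $1$ when $\varphi_{\alpha^{-1}}(i)=i$, i.e.\ $\alpha\in H$, and $0$ otherwise. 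Hence $iL$, with the restriction of $\eta$ and of $\varphi|_H$ (which preserves $iL$ because $\varphi_\alpha(i)=i$ for $\alpha\in H$), is a crossed $H$-algebra with one-dimensional homogeneous components, and any such algebra is isomorphic to $L^{\theta,k}$ for a normalized $K^*$-valued $2$-cocycle $\theta$ on $H$ and a scalar $k\in K^*$: choosing basis vectors $l_\alpha\in iL_\alpha$ with $l_1=i$ reads off $\theta$ from $l_\alpha l_\beta=\theta_{\alpha,\beta}l_{\alpha\beta}$; the $H$-action, being the unique one satisfying Axioms~(1)--(2), agrees with that of $L^\theta$ (Example~\ref{exxama}); and $\eta(l_\alpha,l_{\alpha^{-1}})=\eta(l_\alpha l_{\alpha^{-1}},1_L)=\theta_{\alpha,\alpha^{-1}}\,k$ with $k:=\eta(i,i)$, which lies in $K^*$ because distinct basic idempotents are $\eta$-orthogonal and $\eta|_{L_1\otimes L_1}$ is nondegenerate. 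Replacing the $l_\alpha$ (keeping $l_1=i$) changes $\theta$ by a coboundary and leaves $k$ fixed, so $\Psi(L,i):=(H,[\theta],k)\in\mathcal B(\orlin)$ is well defined and depends only on the isomorphism class of $(L,i)$.

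\emph{Equivariance of $\Psi$.} For $\alpha\in\orlin$ one has $H_{\varphi_\alpha(i)}=\alpha H_i\alpha^{-1}$, the map $\varphi_\alpha$ restricts to an algebra isomorphism $iL\to\varphi_\alpha(i)L$ carrying the degree-$\beta$ part onto the degree-$\alpha\beta\alpha^{-1}$ part (so the cocycle of $\varphi_\alpha(i)L$ is $\alpha_*\theta$), and $\eta(\varphi_\alpha(i),\varphi_\alpha(i))=\eta(i,i)$ by Axiom~(3) of Section~\ref{crocroa}. Thus $\Psi(\alpha(L,i))=\alpha\,\Psi(L,i)$.

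\emph{Construction of $\Phi$ and the relation $\Psi\Phi=\id$.} Put $\Phi(H,\theta,k)=(L^{\orlin,H,\theta,k},i_0)$, where $L^{\orlin,H,\theta,k}$ is the rescaled transfer of $L^{\theta,k}$ from Sections~\ref{transfcro}--\ref{sssns} (built with $\omega_H=1$) and $i_0$ is the basic idempotent in the summand of $\widetilde L_1$ indexed by the coset $H$. This crossed $\orlin$-algebra is simple (Section~\ref{sssns}), and by the remark after Lemma~\ref{l2-jiiif} (via Lemma~\ref{trac}) the pair $(L^{\orlin,H,\theta,k},i_0)$ is, up to isomorphism, independent of the chosen coset representatives; hence $\Phi$ is well defined. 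Reading off the transfer formulas gives $\Psi\Phi=\id$: $\widetilde\varphi_\alpha$ sends the coset-$H$ summand of $\widetilde L_1$ to the coset-$H\alpha^{-1}$ summand, so $\widetilde\varphi_\alpha(i_0)=i_0$ exactly when $\alpha\in H$ and the stabilizer of $i_0$ is $H$; coordinate-wise multiplication yields $i_0\widetilde L_\alpha=L^\theta_\alpha$ for $\alpha\in H$ and $0$ otherwise, so $i_0\widetilde L\cong L^\theta$ with cocycle $\theta$; and $\widetilde\eta(i_0,i_0)=k\,\theta_{1,1}=k$.

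\emph{The relation $\Phi\Psi=\id$, and the main difficulty.} It remains to show that for $(L,i)\in\mathcal A(\orlin)$ with $\Psi(L,i)=(H,\theta,k)$ there is an isomorphism $L\to L^{\orlin,H,\theta,k}$ carrying $i$ to $i_0$; since $iL\cong L^{\theta,k}$ as crossed $H$-algebras with inner product and the transfer construction visibly sends isomorphic inputs to isomorphic outputs, it suffices to identify $L$ with the transfer of $iL$ compatibly with the distinguished idempotents. Choose representatives $\{\sigma_j\}$ of the left cosets of $H$ in $\orlin$ with $\sigma_0=1$; then $\{\varphi_{\sigma_j}(i)\}_j=\mid(L)$, the algebra $L$ splits along these central idempotents as $\oplus_j\varphi_{\sigma_j}(i)L$, and each $\varphi_{\sigma_j}$ is an algebra isomorphism $iL\to\varphi_{\sigma_j}(i)L$ shifting the grading by conjugation by $\sigma_j$. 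Transporting a basis $\{l_\alpha\}_{\alpha\in H}$ of $iL$ through all blocks by the $\varphi_{\sigma_j}$ identifies $L$, degree by degree, with the standard homogeneous summands $L_{\omega_j\alpha\omega_j^{-1}}$ of the transfer of $iL$; one then checks that this linear bijection respects multiplication (products across different blocks vanish on both sides because distinct basic idempotents annihilate, and products within one block are forced by Axiom~(2), $\varphi_\gamma(b)a=ab$ for $a\in L_\gamma$, together with the known structure of $iL$), respects the inner product (the blocks are mutually $\eta$-orthogonal and $\eta$ is $\varphi$-invariant), and respects the $\orlin$-action (which, on a simple crossed $\orlin$-algebra, is pinned down by its permutation action on $\mid(L)$ and its restriction to $iL$, reproducing the twist automorphisms $\varphi_{\alpha_j}$ of the transfer). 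This last verification, which amounts to reconciling the intrinsic structure of an abstract simple crossed $\orlin$-algebra with the explicit coset combinatorics of the transfer, is the \textbf{main obstacle}; the dimension identity $\dim_K iL_\alpha\in\{0,1\}$ proved in the first step is the key input that launches it.
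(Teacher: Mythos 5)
Your proof is correct and follows essentially the same route as the paper: the map $(L,i)\mapsto(\text{stabilizer},\text{cocycle},\eta(i,i))$ is built from the same Axiom~(4) computation giving $\dim iL_\alpha\in\{0,1\}$, the inverse is the transfer with the distinguished idempotent, and $\Psi\Phi=\id$ is read off from the transfer formulas exactly as in the paper. The step you flag as the main obstacle is precisely what the paper finishes with (phrased as injectivity of $\mathcal A(\orlin)\to\mathcal B(\orlin)$): after fixing $\{\omega_j\}$ with $\varphi_{\omega_j}(i_0)$ running over $\mid(L)$, the multiplication is forced block-by-block, the inner product by $\varphi$-invariance and $\eta(j,j)=k$, and the $\orlin$-action by its permutation of $\mid(L)$ plus its restriction to $i_0L$ determined via Axiom~(2) — your sketch matches the paper's argument at a comparable level of detail.
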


\begin{proof}  Let $ L=\oplus_{\alpha\in \orlin} \,
L_{\alpha} $ be a   simple crossed $\orlin$-algebra and $i \in
\mid(L)$. Let $\orlin_i= \{\alpha \in \orlin\, \vert\,
\varphi_\alpha(i)=i\}$ be the stabilizer of  $i $.
 We first compute the dimension of $iL_\alpha
\subset L_\alpha$ for all  $\alpha\in \orlin$.  Applying
(\ref{traces}) to $\beta=1$ and $c=i\in L_1$, we obtain
$$\dim (iL_\alpha)=\Tr(\mu_i:L_\alpha\to L_\alpha)  =\Tr(  \varphi_{\alpha^{-1}}\mu_i:L_1\to L_1).
$$
  The endomorphism $
\varphi_{\alpha^{-1}}\mu_i $ of $L_1$  carries  $i$ to
$\varphi_{\alpha^{-1}}(i)\in I$ and    carries all other elements of
the basis $I $ to $0$. Hence
$$\dim (iL_\alpha)=\begin{cases} 1~ {  \rm {if}}\,\,\, \alpha \in \orlin_i, \\
  0~   {  \rm {if}}\,\,\, \alpha \in G- \orlin_i.\end{cases}
 $$     For  every  $\alpha \in \orlin_i-
\{1\}$, pick a non-zero vector $s_\alpha\in i L_\alpha$. For
$\alpha=1$,  set $s_\alpha=i \in i  L_1$. Then for any
$\alpha,\beta\in \orlin_i$, we have $s_\alpha s_\beta
=\nabla_{\alpha,\beta} \, s_{\alpha  \beta}$ with
$\nabla_{\alpha,\beta} \in K$.  We claim that $\nabla_{\alpha,\beta}
\neq 0$.
 Indeed,
the inner product $\eta$ in $L$ satisfies   $\eta(s_\beta
s_{\beta^{-1}}, 1_L)= \eta(s_\beta, s_{\beta^{-1}})\neq 0$. Hence
$s_\beta s_{\beta^{-1}}=\nabla s_1=\nabla  i $ with
  $\nabla \in K^*$ and
\begin{equation}\label{nonzi} (s_\alpha s_\beta) s_{\beta^{-1}}= s_\alpha (s_\beta
s_{\beta^{-1}})= \nabla  s_\alpha  i = \nabla  s_\alpha \neq 0.  \end{equation} Therefore $s_\alpha
s_\beta\neq 0$ and  $\nabla_{\alpha,\beta} \neq 0$.

The associativity of multiplication in $L$ and the choice $s_1=i$
ensure  that $ \{\nabla_{\alpha,\beta}\}_{\alpha,\beta}$ is a
 normalized $K^*$-valued 2-cocycle on $\orlin_i$. Let $\nabla_i \in H^2(\orlin_i;K^*)$ be its
cohomology class. Under a different choice of $\{s_\alpha\}_{\alpha
 }$ we obtain   the same
  $\nabla_i$.

 The formula $(L,i)\mapsto (G_i, \nabla_i, \eta(i,i))$ defines  a $\orlin$-equivariant mapping
 $\mathcal A (\orlin)\to \mathcal B (\orlin)$.   In the opposite direction, any triple  $(H, \theta, k)\in \mathcal B (\orlin)$ determines
  a simple crossed  $\orlin$-algebra
$\widetilde L=L^{\orlin, H,\theta, k}$. The elements of
$\mid(\widetilde L)$ bijectively correspond to the elements of
$H\backslash \orlin$.
  Let
$i_\theta\in  \mid(\widetilde L)$  correspond  to $H\backslash H \in
H\backslash \orlin$. The formula $(H,\theta, k)\mapsto (\widetilde
L, i_\theta)  $ defines a mapping $ \mathcal B (\orlin)\to \mathcal
A (\orlin) $.

 We claim that these mappings  $ \mathcal B (\orlin)\to  \mathcal A (\orlin) $ and $ \mathcal A (\orlin)\to
\mathcal B (\orlin) $   are mutually inverse bijections. Let us
verify that  the composition  $ \mathcal B (\orlin)\to \mathcal A
(\orlin) \to \mathcal B (\orlin) $ is the identity. Consider  the
pair $(\widetilde  L, i_\theta )$ derived as above from $(H, \theta,
k)\in \mathcal B (\orlin)$.    The stabilizer of
$i_\theta=H\backslash H \in H \backslash \orlin= \mid(\widetilde L)$
with respect to the natural action of $\orlin$    is $H$. It follows
from the definition of $\widetilde L$ that $i_\theta\widetilde
L_\alpha=L^\theta_\alpha$ for all $\alpha\in H$. Set
 $s_\alpha=l_\alpha\in L^\theta_\alpha= i_\theta\widetilde
 L_\alpha$,
where $l_\alpha$ is the vector used in the definition of $
L^\theta$. Now, it is obvious that  the mapping $ \mathcal A
(\orlin)\to \mathcal B (\orlin) $ carries    $(\widetilde
L,i_\theta)$ to $(H,  \theta, k )$.

To accomplish the proof, it suffices  to show that the mapping $
\mathcal A (\orlin)\to  \mathcal B (\orlin) $  is injective. It is
enough  to prove that a      simple crossed $\orlin$-algebra
 $L $  with distinguished    basic idempotent $i_0\in
L_1$ can be uniquely reconstructed from the triple (the stabilizer
$H\subset \orlin$ of $i_0$, the  $H$-algebra $i_0 L$,   the element
$k=\eta(i_0,i_0)$ of $K$). Recall that $L=\oplus_{i\in I}\, iL$,
where $I=\mid (L)$. Since the action of $\orlin$ on $I$ is
transitive, for each $i\in I$ there is
  $\omega_i\in \orlin$ such that $\varphi_{\omega_i}(i_0)=i$. We   take
$\omega_{i_0}=1$. The isomorphism $\varphi_{\omega_i}:L\to L$ maps
$i_0 L$ bijectively onto $iL$. Pick a non-zero vector
    $s_\alpha$   in $i_0 L_\alpha\cong  K$  for all $\alpha\in H$.
 Then the set $\{\varphi_{\omega_i}
(s_{\alpha})\, \vert \, i\in I, \alpha \in H\}$ is a  basis of~$L$.
Multiplication in $L$ is computed in this basis by
$$\varphi_{\omega_i} (s_{\alpha}) \,\varphi_{\omega_{i'}}
(s_{\alpha'})=
\begin{cases} \varphi_{\omega_i} (s_{\alpha} s_{\alpha'})~ {
\rm {if}}\,\,\,i=i',
\\
  0~ {  \rm {if}}\,\,\,i \neq i'.\end{cases}$$
  The inner product
$\eta$ on $L$ is determined by the algebra structure of $L$ and the
equalities $\eta(i,1_L)=\eta(i,i)=k$  for all $i\in I $. It remains
to recover the action $\varphi$ of $\orlin$ on $L$. Given $\alpha\in
H$, the homomorphism $\varphi_{\alpha}:i_0L\to i_0L$ is fully
determined by the condition $\varphi_{\alpha}(s_{\beta})
s_{\alpha}=s_{\alpha} s_{\beta}$ for all $\beta\in H$ (we use  that
$s_{\alpha\beta
 \alpha^{-1}} s_\alpha\neq 0$, cf.\ (\ref{nonzi})). Each
$\beta\in \orlin$ expands as a product $\omega_i\alpha$ where
$i=\varphi_\beta (i_0)$ and  $\alpha\in H$. This gives
$\varphi_{\beta}=\varphi_{\omega_i}\varphi_{\alpha}$ and computes
the restriction  of $\varphi_{\beta}$ to $i_0L$.  Knowing these
restrictions for all $\beta\in \orlin$, we can   recover
   $\varphi_\beta:L\to L$  by $\varphi_\beta (\varphi_{\omega_i}
   (s_{\alpha}))=\varphi_{\beta \omega_i} (s_{\alpha})
 $. \end{proof}

\section {The underlying $\orlin$-algebra}

We study   a  two-dimensional  $X$-HQFT $(A,\tau)$ where
$X=K(\orlin,1)$ with base point $x $. We derive from $(A,\tau)$ an
\lq\lq underlying" crossed
  $\orlin$-algebra.

\subsection{Conventions} Given an   oriented  surface $W$ and a  component $M$ of $ \partial
W$, we    write $M_{+}$ (resp.\ $M_{-}$) for $M$   with orientation
induced from that in $ W$  (resp.\ in $- W$). When all components
$M$ of $ \partial W$ are labeled with signs $\varepsilon(M)=\pm$, we
have  in the category of oriented manifolds
 $\partial W=\sum_M  \varepsilon (M)\, M_{ \varepsilon (M)}$. We view
 such
 $W$ as a cobordism whose bottom (resp.\ top) base is
formed by the boundary components labeled with $-$ (resp.\ with
$+$).

\subsection{Disks  with holes} We describe the structures of $X$-cobordisms on annuli  and disks
with two holes.  Set $C=S^1\times [0,1]$. Fix  once and for all an
orientation in $C$. Set   $C^0=S^1\times 0 \subset \partial C$ and
$C^1=S^1\times 1\subset \partial C$.   It is convenient   to think
of $C$ as of an annulus in  $\Bbb R^2$ with clockwise orientation
such that $C^0$ (resp.\ $C^1$) is its internal (resp.\ external)
boundary component. We provide $C^0,C^1$ with base points $ s \times
0,  s \times 1$, respectively, where $s\in S^1$.
 Given     $\varepsilon, \mu=\pm 1$,    denote by
$C_{\varepsilon,\mu}$ the oriented   annulus  $C$ with  oriented
pointed boundary $ C^0_{\varepsilon}\cup C^1_{\mu} $.
  For example, if $C$ lies in $\Bbb R^2$  as
  above, then both components of $\partial C_{-+}$ are oriented clockwise.

The homotopy class of a map $g:C_{\varepsilon,\mu} \to X$ is
determined by the elements  $\alpha,\beta$ of $ \orlin$, represented
by the loops $g\vert_{C^0_{\varepsilon}}$ and~$g\vert_{s \times
[0,1]}$, respectively. Here     $[0,1]$ is oriented from 0 to 1.
Note that    $g\vert_{C^1_\mu}$ represents   $ \beta^{-1}
\alpha^{-\varepsilon \mu} \beta \in \orlin$. The $X$-surface
($C_{\varepsilon,\mu}$, the map   $C_{\varepsilon,\mu}\to X$
corresponding to $\alpha,\beta \in \orlin$) is denoted
$C_{\varepsilon,\mu} (\alpha; \beta )$.


 Let $D$ be a  2-disk with two holes. Let $Y,Z,T$ be the boundary
components of $D$
   with base points $y,z,t$, respectively.  Fix   proper embedded
arcs $ty$ and $tz$ in $D$ oriented  from  $t$ to $y,z$  and meeting
solely
 at~$t$.  We provide $D$ with the orientation obtained by
rotating $ty$ towards $tz$ in $D$ around    $t$. One can view $D$ as
  a disk with two holes in $\Bbb R^2$   with clockwise orientation
such that $Y,Z$   are  the internal   boundary components, $T$ is
the external boundary component, and $y,z,t$ are the bottom points
of $Y$, $Z $, $T$. Given $\varepsilon , \mu,\nu=\pm$,   let
$D_{\varepsilon,\mu,\nu}$ be $D$ with oriented pointed boundary $
Y_{\varepsilon}\cup Z_{\mu}\cup T_{\nu} $.     For example, if $D$
lies in $\Bbb R^2$  as
  above, then all components of $\partial  D_{--+}$ are oriented clockwise.

  To a map $g:D_{\varepsilon,\mu,\nu}\to X$ we assign  the
homotopy classes of the loops $g\vert_{Y_{\varepsilon}}$, $
g\vert_{Z_{\mu}},g\vert_{ty}, g\vert_{tz}$. This gives  a bijection
from the set of homotopy classes of maps $ D_{\varepsilon,\mu,\nu}
\to  X$ onto $\orlin^4$. For a  tuple $(\alpha,\beta,\rho,
\delta)\in \orlin^4 $, denote  $D_{\varepsilon,\mu,\nu} (\alpha,
\beta;\rho, \delta)$ the disk with holes $D_{\varepsilon,\mu,\nu}$
endowed with the map $g$ to $X$ corresponding to this tuple. Note
that the loops $g\vert_{Y_{\varepsilon}}, g\vert_{Z_{\mu}},
g\vert_{T_{\nu}}$ represent    $\alpha, \beta,  \gamma= (\rho
\alpha^{-\varepsilon} \rho^{-1} \delta \beta^{-\mu}
\delta^{-1})^{\nu}$, respectively. As an exercise, the reader may
construct an $X$-homeomorphism
\begin{equation}\label{cyc}D_{\varepsilon,\mu,\nu} (\alpha, \beta;\rho,
\delta)\approx  D_{\mu,\nu,\varepsilon} ( \beta, \gamma;\rho^{-1}
\delta, \rho^{-1}) \,  \end{equation} cyclically permuting the
components $Y,Z,T$ of $\partial D$.

\subsection{Algebra $L $}\label{algebral}   A  connected   $X$-curve $M$ (i.e., a connected 1-dimensional $X$-manifold)  represents  an element
   $\alpha=\alpha(M)$
  of  $\pi_1(X,x)=\orlin$, cf.\ Section \ref{statesu++}. We shall sometimes
   write $(M,\alpha)$ for $M$.
The vector space  $A_M$  depends only on $\alpha(M)$     up to
canonical isomorphisms. This follows from Lemma~\ref{l1908} because
any two oriented pointed circles are related by a (unique up to
isotopy) orientation preserving and base point preserving
homeomorphism. In this way,  for every $\alpha\in \orlin$,  the HQFT
$(A,\tau)$ gives a finite dimensional vector space. It is
denoted~$L_{\alpha}$.

We provide
 $L=\oplus_{\alpha\in \orlin} L_{\alpha}$
with multiplication and inner product as follows. For $\alpha,
\beta\in G$, the disk with two holes $D_{--+} (\alpha, \beta;1, 1)$
is an $X$-cobordism between the $X$-curves $(Y_-,\alpha) \amalg
(Z_-,\beta)$ and $(T_+,\alpha \beta)$. Consider the associated
homomorphism  $\tau (D_{--+} (\alpha, \beta;1, 1)):
L_{\alpha}\otimes L_{\beta}\to L_{\alpha \beta} $. We
   define    multiplication in $L$ by
$$ab=\tau
(D_{--+} (\alpha, \beta;1, 1)) (a\otimes b) \in L_{\alpha \beta}\, ,
 $$
for $a\in  L_{\alpha}, b\in
   L_{\beta}$.
The annulus $C_{--} (\alpha; 1)$ is an $X$-cobordism between
   $(C^0_-,\alpha) \amalg (C^1_-,{\alpha^{-1}})$ and $\emptyset$.   Set
  $$\eta_\alpha= \tau (C_{--} (\alpha;
1)):  L_{\alpha} \otimes   L_{\alpha^{-1}} \to K. $$  The properties
of HQFTs stated in Section \ref{ee3.2} imply that $\eta_\alpha $ is
non-degenerate for all $\alpha$ and $\eta_{\alpha^{-1}}$ is obtained
from $\eta_\alpha$ by the permutation of the tensor factors. Let
  $\eta: L\otimes L \to K$   be the  symmetric  bilinear form whose
restriction to $L_\alpha \otimes L_\beta$ with $\alpha, \beta \in
\orlin$ is zero if $\alpha \beta \neq 1$ and is $\eta_\alpha$ if
$\alpha \beta =1$.

The group $\orlin$ acts on $L$
  as follows. For $\alpha,\beta \in \orlin$, the annulus
$C_{-+} (\alpha; \beta^{-1})$ is an $X$-cobordism between
$(C^0_-,\alpha) $ and $(C^1_+,\beta\alpha \beta^{-1})$.   Set
  $$\varphi_{\beta}=\tau (C_{-+} (\alpha;
\beta^{-1})):  L_{\alpha} \to  L_{\beta\alpha \beta^{-1}}. $$
Axioms (3) and (6) of an HQFT
 yield that $\varphi_{ \beta \gamma}=  \varphi_{\beta} \varphi_{\gamma} $
 for   $ \beta,  \gamma\in G$
and  $\varphi_1=\id$.

\begin{lemma}\label{oplik}    The triple $(L,\eta,\varphi)$
   is a crossed   $\orlin$-algebra. \end{lemma}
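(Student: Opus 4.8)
The plan is to go through the axioms of a crossed $G$-algebra listed in Section~\ref{crocroa}, translating each into a gluing identity among the model $X$-cobordisms $C_{\varepsilon,\mu}(\alpha;\beta)$ and $D_{\varepsilon,\mu,\nu}(\alpha,\beta;\rho,\delta)$ of the preceding subsections, and then to deduce that identity from the HQFT axioms of Section~\ref{ee3.2}. Three principles will be used repeatedly: Axiom~(3), which decomposes the homomorphism attached to a glued cobordism; the naturality Axiom~(2) together with Lemma~\ref{l1908}, by which $\tau$ of an $X$-cobordism depends only on its $X$-homeomorphism type, and in particular is unchanged by an orientation- and base-point-preserving self-homeomorphism of the underlying cobordism carrying the given homotopy class of maps to $X$ to itself; and the fact (Section~\ref{ee3.4+}) that, $X$ being aspherical, only the induced homomorphisms of fundamental groups matter, so the bookkeeping of elements of $G$ already done in Section~\ref{algebral} and in~(\ref{cyc}) identifies all the $X$-structures that occur.

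Finiteness of $\dim L_\alpha$ and the grading $L_\alpha L_\beta\subset L_{\alpha\beta}$ are immediate from the definitions of Section~\ref{algebral}. Associativity is obtained by composing two copies of $D_{--+}$ in the two possible ways: the resulting $X$-cobordisms are discs with several holes carrying identical group data (here~(\ref{cyc}) is used), so Axioms~(2) and~(3) give $(ab)c=a(bc)$. The unit $1_L\in L_1$ is $\tau$ of a disc viewed as an $X$-cobordism $\emptyset\to(S^1,1)$; capping one input leg of $D_{--+}(1,\beta;1,1)$ with it produces the identity cylinder $C_{-+}(\beta;1)$, so $1_L b=b$ and, symmetrically, $b1_L=b$, by Axioms~(3) and~(6). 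The relations $\varphi_\alpha(L_\beta)\subset L_{\alpha\beta\alpha^{-1}}$, $\varphi_{\beta\gamma}=\varphi_\beta\varphi_\gamma$, $\varphi_1=\id$ are built into or already proved in Section~\ref{algebral}; the remaining part of Axiom~(1), $\varphi_\alpha\vert_{L_\alpha}=\id$, follows since a Dehn twist along the core of the annulus is an $X$-homeomorphism $C_{-+}(\alpha;\alpha^{-1})\to C_{-+}(\alpha;1)$ onto the identity cylinder. That each $\varphi_\beta$ is an algebra automorphism preserving $\eta$, the crossed relation $\varphi_\alpha(b)a=ab$ for $a\in L_\alpha$ (Axiom~(2) of Section~\ref{crocroa}, geometrically the effect of dragging one leg of $D_{--+}$ once around another), and the $\varphi$-invariance of $\eta$ (Axiom~(3) of Section~\ref{crocroa}) are all handled the same way: attach cylinders $C_{-+}$ to some boundary components of $D_{--+}$ or of $C_{--}$ and recognise the result as $X$-homeomorphic to the cobordism obtained without them. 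The inner-product axioms of Section~\ref{BGa} are then checked directly: symmetry of $\eta$ is the symmetry $\eta_{-M}=\eta_M\circ\sigma$ of Section~\ref{ee3.2}, non-degeneracy of $\eta$ on $L_\alpha\otimes L_{\alpha^{-1}}$ is the non-degeneracy of $\eta_M$ recorded there, the vanishing of $\eta$ on $L_\alpha\otimes L_\beta$ for $\alpha\beta\neq1$ is the definition, and $\eta(a,b)=\eta(ab,1_L)$ follows by capping the output of $D_{--+}(\alpha,\alpha^{-1};1,1)$ and identifying the result with the pairing annulus $C_{--}(\alpha;1)$.

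The main obstacle is Axiom~(4) of Section~\ref{crocroa}, the trace identity~(\ref{traces}). Since both sides are linear in $c$, it suffices to compute each trace as an HQFT invariant. Writing $\mu_c$ as the composite of the product $D_{--+}$ with the insertion of $c$ into one leg and $\varphi_\beta$ as the cylinder $C_{-+}$, the endomorphism $\mu_c\varphi_\beta$ of $L_\alpha$ takes the form $v\mapsto\tau(\Xi)(c\otimes v)$ for an $X$-structure $\Xi$ on the disc with two holes whose third boundary circle carries $\gamma=\alpha\beta\alpha^{-1}\beta^{-1}$; taking the trace over $L_\alpha$ glues two of the three boundary circles of $\Xi$ together, producing a one-holed torus $\Theta_1$ with remaining boundary labelled $\gamma^{-1}$, and $\Tr(\mu_c\varphi_\beta:L_\alpha\to L_\alpha)=\eta_\gamma\bigl(c\otimes\tau(\Theta_1)\bigr)$. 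The same argument on the other side gives $\Tr(\varphi_{\alpha^{-1}}\mu_c:L_\beta\to L_\beta)=\eta_\gamma\bigl(c\otimes\tau(\Theta_2)\bigr)$ for a one-holed torus $\Theta_2$, again with boundary labelled $\gamma^{-1}$, but whose two handle curves carry $\alpha$ and $\beta$ in the order opposite to that of $\Theta_1$. The point is that $\Theta_1$ and $\Theta_2$ are $X$-homeomorphic via a mapping class of the one-holed torus realising an appropriate element of $SL_2(\ZZ)$ — one interchanging the two handle curves, up to orientation and conjugation, while preserving the conjugacy class of the boundary commutator; being orientation preserving, it induces the identity on $A_{(S^1,\gamma^{-1})}=L_{\gamma^{-1}}$, so $\tau(\Theta_1)=\tau(\Theta_2)$ by the naturality of $\tau$ and Lemma~\ref{l1908}, and~(\ref{traces}) follows from the non-degeneracy of $\eta_\gamma$. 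The delicate parts, requiring the careful tracking of transports and conjugations from Section~\ref{algebral}, are the exact identification of the maps $\Theta_i\to X$ (the precise images of the handle curves, with their orientations) and the verification that the needed change of coordinates on the one-holed torus is genuinely induced by an orientation- and base-point-preserving homeomorphism compatible with those maps; everything else is routine gluing of the model pieces.
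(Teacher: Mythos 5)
Your plan follows the paper's proof nearly verbatim for most of the axioms: the gluings of $D_{--+}$ with annuli for associativity, the unit element, the homomorphism property of $\varphi_\beta$, and the inner-product identities; the Dehn twist on $C_{-+}(\alpha;\alpha^{-1})$ for Axiom~(1) of a crossed algebra; and the self-homeomorphism of the thrice-holed disk dragging one leg around the other for Axiom~(2). Where you depart from the paper — and where your proposal has a genuine gap — is Axiom~(4), the trace identity~(\ref{traces}). The paper's argument starts from a single concrete once-punctured torus $P=(S^1\times S^1)\setminus(\text{small disk})$ with a fixed map $g:P\to X$ whose restrictions to $S^1\times\{s\}$ and $\{s\}\times S^1$ represent $\alpha$ and $\beta$, and then decomposes this same pair $(P,g)$ in two ways, cutting along the meridian and along the longitude; the two traces are read off as two evaluations of the one number $\tau(P,g)(c)$, so no homeomorphism between two tori is ever needed. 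You instead build two abstract once-punctured $X$-tori $\Theta_1,\Theta_2$ by gluing $D_{--+}$ to annuli in two ways and must then \emph{prove} they are $X$-homeomorphic via some mapping class realizing an element of $SL_2(\ZZ)$. You flag this verification as ``delicate'' and do not carry it out, but it is precisely where the content of Axiom~(4) resides: an orientation-preserving mapping class exchanging the two handle curves of a one-holed torus necessarily reverses the orientation of one of them (acting on homology as $\bigl(\begin{smallmatrix}0&-1\\1&0\end{smallmatrix}\bigr)$), so one must check that this $\alpha\leftrightarrow\beta^{-1}$ exchange, together with the conjugations built into your $\Theta_i$ and any boundary Dehn twist that arises, really carries $\Theta_1$ to $\Theta_2$ as $X$-cobordisms. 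Until that is done the proof of~(\ref{traces}) is incomplete. The cleaner route is the paper's: fix $P$ and cut it twice, so the equality of the two traces is automatic rather than something to be matched up by a mapping class.
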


\begin{proof}  Let us prove that $(ab)c=a(bc)$ for any $a\in L_\alpha,
b\in L_\beta, c\in L_\gamma$ with $\alpha, \beta, \gamma \in
\orlin$. Consider the $X$-cobordisms $W_0= D_{--+}(\alpha, \beta;1,
1) \amalg C_{-+} (\gamma;1)$ and $W_1=  D_{--+}(\alpha \beta,
\gamma;1, 1)$.   Here $W_0$ is an $X$-cobordism between $(Y_-,\alpha
) \amalg (Z_-, \beta) \amalg (C^0_-,\gamma)$ and $ (T_+,\alpha\beta)
\amalg (C^1_+,\gamma)$. By Axioms (4)    and (6) of an HQFT  and the
definition of multiplication in $L$, the homomorphism $\tau(W_0):
L_\alpha \otimes L_\beta \otimes L_\gamma \to L_{\alpha \beta}
\otimes L_\gamma$ carries  $a\otimes b \otimes c$ into $ab \otimes
c$. The homomorpism $\tau(W_1):   L_{\alpha \beta} \otimes
L_\gamma\to L_{\alpha \beta \gamma}$ carries  $a b \otimes c$ into
$(ab) c$. The gluing of $W_0$ to $W_1$ along  an $X$-homeomorphism
$(T_+,\alpha\beta)\amalg (C^1_+,\gamma) \approx
(Y_-,\alpha\beta)\amalg (Z_-,\gamma)$ yields an $X$-cobordism  $W$
and
$$\tau(W) (a\otimes b \otimes c)= \tau(W_1)\, \tau (W_0) ( a\otimes
b \otimes c) =  \tau(W_1)   ( a b \otimes c) =(ab)c.$$ The same
$X$-cobordism $W$ can be also obtained  by gluing the $X$-cobordisms
$ C_{-+} (\alpha;1)  \amalg  D_{--+}(\beta, \gamma;1, 1)$ and $
D_{--+}(\alpha, \beta  \gamma;1, 1)$ along an $X$-homeomorphism
$$(C^1_+,\alpha) \amalg (T_+, \beta\gamma)  \approx (Y_-,\alpha)\amalg
(Z_-, \beta\gamma)\, .$$  Therefore $\tau(W) (a\otimes b \otimes c)
=a (bc)$. Thus, $(ab)c=a(bc)$.

The unit of  $L$ is constructed as follows. Let $B_+$ be an oriented
2-disk whose boundary is pointed and   endowed with induced
orientation. There is only one homotopy class of maps $B_+\to X$.
The corresponding  homomorphism $\tau(B_+):K\to L_1$ carries $1_K\in
K$ into an element of $L_1$, denoted $1_L$.
 Let us prove that $1_L$ is a right unit of $L$ (the proof that it is a
left unit is similar). Let $a\in L_\alpha$ with $\alpha \in \orlin$.
Consider the $X$-cobordisms $W_0=
 C_{-+} (\alpha;1) \amalg B_+$ and $W_1=  D_{--+}(\alpha , 1;1, 1)$.
The gluing of $W_0$ to $W_1$ along  an $X$-homeomorphism $
(C^1_+,\alpha) \amalg (\partial B_+, 1) \approx (Y_-,\alpha )\amalg
(Z_-,1)$ yields an $X$-cobordism $X$-homeomorphic to $C_{-+}
(\alpha; 1)$. By Axioms (6)    and (3) of an HQFT, $$a =\tau(C_{-+}
(\alpha; 1)) (a )= \tau(W_1)\, \tau (W_0) ( a ) = \tau(W_1)   ( a
\otimes 1_L) =a1_L.$$

To show that $\eta$ is an inner product on $L$, we need only to
prove that $\eta(ab,c)=\eta(a,bc)$ for any $a \in L_{\alpha}$, $
b\in L_{\beta}$, $c \in L_{\gamma}$ with $\alpha, \beta, \gamma\in
\orlin$.
 If $\alpha \beta \gamma\neq1$, then $\eta(ab,c)=0=\eta(a,bc)$.
Assume that $\alpha \beta \gamma=1$.  Gluing    $C_{--}
(\alpha\beta;1)$ to   $D_{--+}(\alpha,\beta;1,1)$ along
$(C^0_-,\alpha\beta)\approx (T_+,\alpha\beta)$, we obtain
$D_{---}(\alpha,\beta;1,1)$. Hence
$$\eta(ab,c)= \tau (D_{---}(\alpha,\beta;1 ,1 )) (a\otimes b\otimes
c)\, .$$  Similarly, gluing   $C_{--}
(\alpha;1)$ to   $D_{--+}(\beta,\gamma;1,1)$ along
$(C^1_-,\alpha^{-1})\approx (T_+,\beta\gamma)$, we obtain
$D_{---}(\beta,\gamma;1,1)$. Hence
$$\eta(a,bc)= \tau (D_{---}(\beta,\gamma;1,1)) ( b\otimes c\otimes a)\, .$$
 By  (\ref{cyc}) (where $\rho=\delta=1$) and Axiom (2) of an
HQFT, $\eta(ab,c)=\eta(a,bc)$.

  Let us prove that $\varphi_{\rho}:L\to L$ is an algebra homomorphism for all $\rho\in G$.
 Pick  $a \in L_{\alpha}, b\in L_{\beta}$ with $\alpha, \beta
 \in \orlin$.  Note   that
 for  any  $  \rho,\delta\in \orlin$,  the
homomorphism
$$\tau (D_{--+}(\alpha,\beta;\rho,\delta)): L_{\alpha}\otimes
L_{\beta}\to  L_{\rho\alpha\rho^{-1} \delta\beta\delta^{-1}}$$
carries $a \otimes b$ to $\varphi_\rho (a)\, \varphi_\delta(b)  $.
This is so because $D_{--+}(\alpha,\beta;\rho,\delta)$ can be
obtained by gluing
 $C_{-+}(\alpha; \rho^{-1})\amalg  C_{-+}(\beta; \delta^{-1})$ to   $D_{--+}(\alpha,\beta;1,1)$.
Similarly, gluing
 $C_{-+} ( \alpha \beta ; \rho^{-1})$
  to   $D_{--+}( \alpha, \beta;1,1)$  along
$(C^0_-, \alpha\beta)\approx (T_+,\alpha\beta)$, we obtain
$D_{--+}(\alpha,\beta; \rho ,\rho )$. Therefore
 $$\varphi_{\rho}(ab)= \tau ( D_{--+}(\alpha,\beta; \rho ,\rho ))
( a\otimes b)=\varphi_{\rho}(a)
\,\varphi_{\rho} (b)\, .$$

Let us verify Axioms (1) -- (4) of a crossed $G$-algebra.

 (1)    The Dehn twist along the circle  $S^1\times
(1/2)\subset  C_{-+} (\alpha; 1)$  yields an  $X$-homeomorphism
$C_{-+} (\alpha; 1)\to C_{-+}(\alpha; \alpha^{-1})$. Axiom (2) of an
HQFT implies that
 $\varphi_{\alpha}\vert_{L_{\alpha}}= \tau(C_{-+}(\alpha;
\alpha^{-1}))=\tau(C_{-+}(\alpha;1))= \id$.

    (2) Consider a self-homeomorphism $f$ of the disk with two holes $D $
which is the identity on $T$ and    permutes   $(Y,y)$ and $(Z,z)$.
 We choose $f$
so that    $f(tz)= ty$ and   $f(ty)$ is   an  arc leading from $t$
to $z$ and homotopic to the product of four  arcs $ty,
\partial Y, (ty)^{-1}, tz$.   An easy
  computation   shows that $f$ is an $X$-homeomorphism
  $D_{--+}( \alpha, \beta;1,1) \to D_{--+}( \beta, \alpha;1,
\beta^{-1})$. Axiom  (2) of an HQFT   implies that the homomorphism
 $\tau (D_{--+}( \alpha, \beta;1,1)): L_{\alpha}\otimes L_{\beta}\to
L_{ \alpha \beta} $ is obtained from the homomorphism  $\tau
(D_{--+}( \beta, \alpha;1,\beta^{-1})) : L_{\beta}\otimes
L_{\alpha}\to  L_{ \alpha \beta}  $   by composing with the flip $
L_{\alpha}\otimes L_{\beta}\to L_{\beta}\otimes L_{\alpha}$.
Therefore for any $a\in L_\alpha, b\in L_\beta$
$$ab=
\tau (D_{--+}( \alpha, \beta;1,1))(a\otimes b)=\tau (D_{--+}( \beta,
\alpha;1,\beta^{-1})) (b\otimes a)= b \,\varphi_{\beta^{-1}}(a).$$
Replacing $a$ with $\varphi_\alpha(b)$ and $b$ with $a$, we obtain
$\varphi_\alpha(b)a =a \varphi_{\alpha^{-1}}
(\varphi_\alpha(b))=ab$.

(3)  The   identity $\eta (\varphi_{\beta}(a),\varphi_{\beta}(b))=
\eta (a,b)$ with $a\in L_\alpha, b\in L_{\alpha^{-1}}$ follows from
the fact that the annulus $C_{--}(\alpha;1)$ used to define
$\eta_\alpha$ may be obtained by gluing the annuli
$C_{-+}(\alpha;\beta^{-1})$, $C_{--}(\beta \alpha\beta^{-1};1)$, and
$C_{-+}(\alpha^{-1};\beta^{-1})$.

    (4)   Fix an orientation of   $S^1$ and a point $s\in S^1$.
     Let $P$ be the punctured torus obtained from
$  S^1\times S^1$  by removing a small open   2-disk   disjoint from
$S^1\times \{s\}$ and $  \{s\} \times S^1$. We   assume that
 the circle $\partial P$ meets  $S^1\times \{s\}$ and $  \{s\}  \times S^1$ precisely at the point $(s,s)$ and we take
 $(s,s)$ as the base point of $\partial P$.  Provide $P $ with orientation induced from the product orientation
in $S^1\times S^1$.

Fix  a map $g:P\to X=K(\orlin,1)$ such that $g(s, s)=x $ and the
restrictions of~$g$ to  $S^1\times \{s\}$ and $   \{s\}  \times S^1$
represent $\alpha,\beta \in \orlin$, respectively.  (The
orientations on   $S^1\times \{s\},   \{s\}  \times S^1$ are induced
by that of $S^1$.) Then the loop $ g\vert_{\partial P}: (\partial
P)_- =-\partial P\to X $ represents $\alpha\beta \alpha^{-1}
\beta^{-1}$.  We view  $(P,g)$  as an $X$-cobordism between $
((\partial P)_-,g\vert_{\partial P})$ and $\emptyset$. We compute
the associated homomorphism $\tau (P,g): L_{\alpha\beta \alpha^{-1}
\beta^{-1}}\to K$ in two different ways.
 Observe first that $(P,g)$ can be  obtained from
  $D_{--+}  (\alpha\beta \alpha^{-1} \beta^{-1},\alpha;1,\beta)$ by
gluing the boundary circles  $(Z_-,\alpha)$ and $(T_+,\alpha)$ along
an $X$-homeomorphism. (These circles    give   $S^1\times
\{s\}\subset P$.) The axioms of an HQFT imply that     $\tau (P,g) $
carries $c\in L_{\alpha\beta \alpha^{-1} \beta^{-1}}$ to the  trace
of the homomorphism $$L_\alpha\to L_\alpha, \,\,\,  d\mapsto  \tau
(D_{--+}  (\alpha\beta \alpha^{-1} \beta^{-1},\alpha;1,\beta))
(c\otimes d)=c\, \varphi_\beta (d)\, .$$  The same pair $(P,g)$ is
obtained from
  $D_{--+}  (\alpha\beta \alpha^{-1}
\beta^{-1},\beta;\alpha^{-1},\alpha^{-1})$ by gluing    along
$(Z_-,\beta)\approx (T_+,\beta)$. (The circles $Z_-$ and $T_+$  give
  $  \{s\}  \times S^1\subset P$.) Therefore $\tau (P,g) $
carries $c\in L_{\alpha\beta \alpha^{-1} \beta^{-1}}$ to the  trace
of the homomorphism
$$L_\beta \to L_\beta,\,\,\, d\mapsto \tau (D_{--+}   (\alpha\beta \alpha^{-1}
\beta^{-1},\beta;\alpha^{-1},\alpha^{-1}))  (c\otimes d)=
 \varphi_{\alpha^{-1}}(cd).$$ Thus,   $\Tr
(\mu_c\, \varphi_{\beta}:L_{\alpha}\to L_{\alpha})=\tau (P,g) (c)=
\Tr ( \varphi_{\alpha^{-1}} \mu_c:L_{\beta}\to
L_{\beta})$.\end{proof}

\subsection{Example}\label{theoopers--}
By  Section  \ref{ee3.3}, any $a\in H^2(G;K^*)$ determines a
2-dimensional $X$-HQFT $(A^a, \tau^a)$, where $X=K(G,1)$ with base
point $x$. By Section  \ref{exxama},   $a$ determines an isomorphism
class of crossed $\orlin$-algebras $L^a$.

 \begin{theor}\label{liftii+98}
 For any $a\in H^2(G;K^*)$, the underlying
crossed   $\orlin$-algebra of $(A^a, \tau^a)$  is isomorphic to
$L^{-a}$.
\end{theor}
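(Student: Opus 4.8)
The plan is to reduce the statement to genus-zero computations: by Section \ref{algebral}, the multiplication, inner product and $G$-action of the underlying crossed $G$-algebra of a $2$-dimensional HQFT are read off from pairs of pants and annuli, so it suffices to carry these out for the HQFT $(A^\theta,\tau^\theta)$ of Section \ref{ee3.3}. I would fix a model $X=K(G,1)$ whose cellular cochain complex is the normalized bar complex of $G$ (one $1$-cell $[\alpha]$ for each $\alpha\in G$ and one $2$-cell $[\alpha|\beta]$ for each pair, attached along the word $\alpha\beta(\alpha\beta)^{-1}$), represent $a$ by a normalized group $2$-cocycle $\theta=\{\theta_{\alpha,\beta}\}$, and view $\theta$ as the cellular (hence singular) cocycle on $X$ with $\theta([\alpha|\beta])=\theta_{\alpha,\beta}$. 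Since the isomorphism classes of $(A^\theta,\tau^\theta)$ and of $L^\theta$ (Example \ref{exxama}) depend only on $a$, and the underlying crossed $G$-algebra is an isomorphism invariant, it is then enough to show that the underlying crossed $G$-algebra $L$ of $(A^\theta,\tau^\theta)$ is isomorphic to $L^{\theta^{-1}}$, the crossed $G$-algebra of Example \ref{exxama} built from the cocycle $(\alpha,\beta)\mapsto\theta_{\alpha,\beta}^{-1}$; as $[\theta^{-1}]=-a$, this gives the theorem.

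First I would describe $L$ as a graded vector space. For each $\alpha\in G$ fix a pointed map $S^1\to X$ representing $\alpha$ (the constant map if $\alpha=1$) and let $\sigma_\alpha$ be the fundamental $1$-cycle of $S^1$ consisting of a single simplex winding once around. By Section \ref{ee3.3}, $L_\alpha=A^\theta_{(S^1,\alpha)}$ is one-dimensional, generated by $l_\alpha:=\langle\sigma_\alpha\rangle$. Thus every graded piece of $L$ is one-dimensional; in particular $L_1=Kl_1$, so $L$ is a simple crossed $G$-algebra with a single basic idempotent (Section \ref{sssns}). The unit $1_L\in L_1$ is $\tau^\theta(B_+)(1_K)$ for a $2$-disk $B_+$ mapped trivially to $X$; since its boundary loop may be taken constant, $\tau^\theta(B_+)$ can be computed with a degenerate fundamental $2$-chain, on which $\theta$ equals $1$, so $1_L=l_1$. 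The same degenerate-chain argument applied to $\tau^\theta(C_{--}(1;1))$ gives $\eta(1_L,1_L)=\eta(l_1,l_1)=1$.

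Next I would compute the multiplication. By Section \ref{algebral}, $l_\alpha l_\beta=\tau^\theta\!\left(D_{--+}(\alpha,\beta;1,1)\right)(l_\alpha\otimes l_\beta)$, and by the construction of $(A^\theta,\tau^\theta)$ this equals $\theta(g_*B)\,l_{\alpha\beta}$, where $g:D\to X$ is the structure map and $B$ is a fundamental $2$-chain of the pair of pants $D$ whose boundary is the difference of the chosen fundamental cycles on the three boundary circles. The chain $g_*B$ in $X$ has boundary $[\alpha\beta]-[\alpha]-[\beta]$ in the bar complex; matching the orientation conventions of Sections \ref{ee3.3} and \ref{algebral} against those of the bar resolution, $g_*B$ equals $-[\alpha|\beta]$ up to boundaries and degenerate chains, so $\theta(g_*B)=\theta_{\alpha,\beta}^{-1}$ and
$$l_\alpha l_\beta=\theta_{\alpha,\beta}^{-1}\,l_{\alpha\beta}.$$
This inversion of the cocycle is exactly what turns $a$ into $-a$, and establishing that the exponent is $-1$ rather than $+1$ — i.e.\ reconciling the paper's cobordism and orientation conventions with the bar resolution of $G$ — is the step I expect to be the main obstacle; everything else is bookkeeping.

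Finally I would invoke rigidity. In a crossed $G$-algebra all of whose graded pieces are one-dimensional, the inner product and the $G$-action are determined by the multiplication constants together with the scalar $\eta(1_L,1_L)$: indeed Axioms (2),(3) for an inner product (Section \ref{BGa}) force $\eta(l_\alpha,l_{\alpha^{-1}})=\eta(l_\alpha l_{\alpha^{-1}},1_L)=\theta_{\alpha,\alpha^{-1}}^{-1}\,\eta(1_L,1_L)$ and $\eta$ to vanish in non-complementary degrees, while Axiom (2) of a crossed $G$-algebra (Section \ref{crocroa}), $\varphi_\alpha(b)\,a=ab$ for $a\in L_\alpha$, determines $\varphi_\alpha(l_\beta)$ because right multiplication by $l_\alpha$ is an isomorphism $L_{\gamma}\to L_{\gamma\alpha}$ for every $\gamma$ (its structure constant lies in $K^*$). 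The crossed $G$-algebra $L^{\theta^{-1}}$ of Example \ref{exxama} carries precisely the same data: unit $l_1$, multiplication $l_\alpha l_\beta=\theta_{\alpha,\beta}^{-1}l_{\alpha\beta}$, and $\eta(l_1,l_1)=(\theta^{-1})_{1,1}=1$. Hence the degree-preserving linear bijection sending the canonical generator of $(L^{\theta^{-1}})_\alpha$ to $l_\alpha$ preserves the unit, the multiplication, the inner product and the $G$-action, i.e.\ is an isomorphism of crossed $G$-algebras $L^{\theta^{-1}}\to L$. Since $[\theta^{-1}]=-a$, the underlying crossed $G$-algebra of $(A^a,\tau^a)$ is isomorphic to $L^{-a}$, as asserted.
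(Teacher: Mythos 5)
Your outline follows the same route as the paper: choose generators $l_\alpha=\langle\sigma_\alpha\rangle$ of the one-dimensional graded pieces, compute $l_\alpha l_\beta=\theta_{\alpha,\beta}^{-1}\,l_{\alpha\beta}$ via the pair of pants, pin down $\eta(1_L,1_L)=1$ from a disk/sphere, and then recover $\eta$ and the $G$-action by rigidity (as in the paper, which invokes the uniqueness of the crossed $G$-algebra structure on $L^{\theta_-}$). So in strategy this is the argument the paper gives.

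However, the step you yourself call ``the main obstacle'' --- showing that the pushforward of a fundamental $2$-chain of the trinion equals $-[\alpha\,|\,\beta]$ modulo boundaries and degenerate chains, hence that $\theta$ is evaluated with the exponent $-1$ --- is the entire content of the theorem, and you leave it asserted rather than proved. With only the data you have checked, your argument yields $l_\alpha l_\beta=\theta_{\alpha,\beta}^{\pm1}l_{\alpha\beta}$, not which sign. The paper carries this out explicitly and not with a cellular bar model: it fixes loops $u_\alpha$, singular $2$-simplices $f_{\alpha,\beta}:\Delta\to X$ whose three edges run over $u_\alpha p$, $u_\beta p$, $u_{\alpha\beta}p$, sets $\theta_{\alpha,\beta}=\Theta(f_{\alpha,\beta})$ (well-defined because $\pi_2(X)=0$), homotopes the structure map on $D_{--+}(\alpha,\beta;1,1)$ so that the cut arcs $ty,tz$ collapse to the base point, factors through the quotient triangle $D'=D/(ty\cup tz)$, and checks that $B'=-f$ (with $f:\Delta\to D'$ the vertex-identifying projection) is an allowable fundamental chain --- the minus sign being read off from the orientation conventions on $D$ versus $\Delta$, exactly the comparison you deferred. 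If you want a cellular-bar-complex model instead, you still have to make the same orientation comparison, so the gap is in the same place.
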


\begin{proof} Set $S^1=\{z\in \CC\, \vert\, \vert z\vert=1\}$ with clockwise
 orientation and base point $ -i$.
  For  each $\alpha\in \orlin=\pi_1(X,x)$, fix a loop $  u_\alpha: S^1\to X$ carrying $-i$ to $x$ and representing $\alpha$. We choose $u_1$ to be the
 constant loop at $x$. Let $p:[0,1]\to S^1$ be the map carrying $t\in [0,1]$ to $-i\, \exp(-2\pi i t)\in S^1$.
   Let
$\Delta\subset \Bbb R^3$ be the standard 2-simplex   with the
vertices $v_0=(1,0,0)$, $v_1=(0,1,0)$, $v_2=(0,0,1)$.    For
$\alpha, \beta\in \orlin$,    pick a map $  f_{\alpha,
\beta}:\Delta \to X$ such that for all $t\in [0,1]$, $$ f_{\alpha,
\beta} ((1-t)
v_0+t v_1)= u_{\alpha } p(t),\, f_{\alpha,
\beta} ((1-t) v_1+t v_2)= u_{\beta} p(t)
,\, $$
$$f_{\alpha,
\beta} ((1-t) v_0+t v_2)= u_{\alpha\beta} p(t) \, .
$$  We   represent   $ a  \in H^2(G;K^*)= H^2(X;K^*)$  by a $K^*$-valued
singular 2-cocyle $\Theta $ on~$X$.  For any
$\alpha, \beta\in \orlin$,
let $\theta_{\alpha, \beta}= \Theta (f_{\alpha,\beta})
\in K^*$ be the evaluation of $\Theta$ on the singular simplex $f_{\alpha, \beta}$.
Observe that  $\theta_{\alpha, \beta}$  does not depend on the
choice of $f_{\alpha,\beta} $. Indeed, if
$f'_{\alpha,\beta}:\Delta \to X$ is another map as above, then the
formal difference $f_{\alpha,\beta}-f'_{\alpha,\beta}$  is a
singular 2-cycle in $X$. The homology class of this 2-cycle is
trivial because it can be realized by a   map $S^2\to X$ and
$\pi_2(X)=0$. Therefore $\Theta (f_{\alpha,\beta})=\Theta
(f'_{\alpha,\beta})$. A similar argument shows that $\theta=\{
\theta_{\alpha, \beta}\}_{\alpha, \beta\in G}$ is a 2-cocycle. Multiplying, if necessary,   $\Theta$ by a coboundary, we can ensure that
  $\theta_{1, 1}=1$. Both  $\Theta$ and
$\theta$  represent   $ a  \in H^2(G;K^*)$ and  $
\theta_-=\{ \theta^{-1}_{\alpha, \beta}\}_{\alpha, \beta\in G}$
represents $-a$.

  Let $L$ be the crossed  $\orlin$-algebra
underlying the
  HQFT $(A^\Theta, \tau^\Theta)$. We   prove that $L$ is isomorphic to  the crossed
    $\orlin$-algebra~$ L^{\theta_-}$ determined by the cocycle $\theta_-$.

  For   $\alpha\in \orlin$, the  pair $(S^1,    u_\alpha)$ is an $X$-curve denoted
    $M_\alpha$.
 The singular 1-simplex $p:[0,1]\to S^1$ is a fundamental cycle
  of $M_\alpha$. By definition of   $(A^\Theta, \tau^\Theta)$
    and  $L$,  this
     cycle   determines a  generating vector
    $p_\alpha=\langle p\rangle $ in   $L_\alpha=A^\Theta_{M_\alpha}\cong
    K$. We claim that
   $   p_\alpha p_\beta  =\theta^{-1}_{\alpha, \beta} \,   p_{\alpha\beta} $
    for all $\alpha, \beta \in \orlin$. To compute $   p_\alpha p_\beta$,
    we
  apply $\tau^\Theta$ to
    the disk with holes $D=D_{--+} (\alpha, \beta; 1,1)$ viewed as an $X$-cobordism between
     $M_\alpha\amalg M_\beta$ and $M_{\alpha \beta}$.   By
     definition, $   p_\alpha p_\beta  =k \,   p_{\alpha\beta} $
     for $k=g^*(\Theta)(B)$, where $g:D\to X$ is the map determined
     by the tuple $(\alpha, \beta; 1,1)$ and $B\in C_2(D;\ZZ)$ is a fundamental singular 2-chain  in $D$ such that $\partial B=
     p_{\alpha  \beta}   -p_\alpha- p_\beta$. Recall the segments $ty, tz\subset D$.
      Clearly, $D'=D/ty \cup tz$ is an oriented triangle with oriented edges and
     identified
      vertices.  Deforming   $g$
     in its homotopy class, we may assume that $g(ty \cup tz)=x$.
    Then
  $g$ expands as the composition of the projection $q:D\to D'$
     with a map $g':D'\to X$, and $k$   is
     the evaluation of $(g')^*(\Theta)$ on $q_*(B)$. Instead of  $q_*(B)$ we can use  more general   2-chains in $D'$.
       Namely, let
     $\partial D'=q(\partial D)$ be the union of the sides of $D'$.       Then $k= (g')^*(\Theta) (B')$ for any 2-chain $B'$ in $D'$
     such that $\partial B'=
     qp_{\alpha\beta}   -qp_\alpha- qp _\beta$ and the image of $B$ in
     $C_2(D',\partial D';\ZZ)$ represents the generator of $
     H_2(D',\partial D';\ZZ)=\ZZ$ determined by the orientation of $D'$. There is an obvious projection $f:\Delta
     \to D'$ identifying the vertices of $\Delta$ and such that
     $g'f=f_{\alpha, \beta}$ (up to homotopy   ${\rm {rel}}\, \partial \Delta$). The singular chain $B'=-f$
     satisfies all the conditions above. Therefore
     $$k= (g')^*(\Theta) (B')=((g')^*(\Theta) (f))^{-1}=(\Theta(g'f))^{-1}=
     (\Theta (f_{\alpha, \beta}))^{-1}=\theta^{-1}_{\alpha,
     \beta}\, .$$

   By definition of $L^{\theta_-}$,  the formula $  p_\alpha  \mapsto \ell_\alpha$ for
    $\alpha\in \orlin$ defines   an isomorphism of $\orlin$-algebras $L\cong L^{\theta_-}$.
   It remains to compare the  inner products and  the actions of $G$. The inner
   product,
     $\eta^-$, on $L^{\theta_-}$ satisfies $\eta^- (1_L,1_L)=1$.
  By Remark \ref{Eqcat++} below, the inner product $\eta$ on $L$ satisfies
  $\eta(1_L,1_L)=\tau^\Theta (S^2) =1 $.
Since $L_1=L^{\theta_-}_1=K$, we have $\eta^-=\eta $. The
isomorphism    $L\cong L^{\theta_-} $
  commutes with the action of $G$ because there is only one action of $G$ on $L^{\theta_-} $ satisfying  the
  axioms of a crossed $G$-algebra, cf.\   Section  \ref{exxama}.
 \end{proof}

\section{Properties of the underlying crossed $G$-algebras}\label{propunderalgas}

As above, $X=K(\orlin,1)$ with base point $x $.

\subsection{Functoriality}\label{Eqcat}  An isomorphism of   2-dimensional $X$-HQFTs induces an
isomorphism of the underlying crossed $G$-algebras in the obvious
way. This defines    a functor from the category of 2-dimensional
$X$-HQFTs and their isomorphisms to the category of crossed
$G$-algebras and their isomorphisms. This functor is   an
equivalence of categories, see \cite{Tu2}. We will need only the
following weaker claim.

 \begin{lemma}\label{liftii}
Any isomorphism of the  underlying crossed $G$-algebras of
2-dimen\-sional $X$-HQFTs is induced by an isomorphism of the HQFTs
themselves.
\end{lemma}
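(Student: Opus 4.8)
The plan is to construct the lifting explicitly on $X$-curves using the canonical identifications with the underlying algebra, and then to reduce every HQFT compatibility to a defining property of a crossed $G$-algebra isomorphism via a cut-into-elementary-pieces argument for cobordisms. First I would fix two $2$-dimensional $X$-HQFTs $(A,\tau)$, $(A',\tau')$ with underlying crossed $G$-algebras $(L,\eta,\varphi)$, $(L',\eta',\varphi')$ as constructed in Section \ref{algebral}, together with an isomorphism of crossed $G$-algebras $F\colon L\to L'$. For a connected $X$-curve $M$ representing $\alpha\in G$, Lemma \ref{l1908} provides a canonical identification $A_M\cong L_\alpha$ (and likewise $A'_M\cong L'_\alpha$), since any two oriented pointed circles are related by an essentially unique orientation- and basepoint-preserving homeomorphism. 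I would then define $\rho_M\colon A_M\to A'_M$ to be $F|_{L_\alpha}$ read through these identifications, extend it to disconnected $X$-curves by $\rho_{M\amalg N}=\rho_M\otimes\rho_N$ via Axiom (4), and set $\rho_\emptyset=\id_K$. Since the identifications $A_M\cong L_{\alpha(M)}$ are canonical, each $\rho_M$ is a well-defined $K$-isomorphism.

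Next I would check that the family $\{\rho_M\}$ commutes with the action of $X$-homeomorphisms. Any $X$-homeomorphism of $X$-curves is, up to isotopy, a permutation of connected components composed with self-homeomorphisms of individual pointed oriented circles; the latter are isotopic to the identity rel basepoint and hence, by Lemma \ref{l1908}, act as the identity under the identifications $A_M\cong L_\alpha$. Commutativity with the component-permuting homeomorphisms then follows from the naturality of the monoidal structure asserted in Axiom (4). This yields commutativity of all the square diagrams associated with $X$-homeomorphisms.

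The heart of the proof is compatibility with $X$-cobordisms: for every $2$-dimensional $X$-cobordism $(W,M_0,M_1,g)$ one must show $\rho_{M_1}\circ\tau(W)=\tau'(W)\circ\rho_{M_0}$. I would choose a Morse function on $W$ adapted to the cobordism structure and, simultaneously, homotope $g$ to a standard representative on each handle; the latter is legitimate because $X=K(G,1)$ is aspherical, so that maps to $X$ are governed by the $G$-system and skeleton calculus of Sections \ref{Aps1}--\ref{statesu}. This presents $W$ as a gluing along $X$-curves (Axiom (3)) of elementary $X$-cobordisms: cylinders, the disks $B_\pm$, the holed disks $D_{--+}(\alpha,\beta;1,1)$ together with their co-versions, and the twisting annuli $C_{-+}(\alpha;\beta^{-1})$. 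On each elementary piece the required commutation is exactly one property of $F$: for $B_+$ it is $F(1_L)=1_{L'}$, since $\tau(B_+)(1_K)=1_L$; for $D_{--+}(\alpha,\beta;1,1)$ it is multiplicativity of $F$; for $C_{--}(\alpha;1)$ it is preservation of $\eta$; for $C_{-+}(\alpha;\beta^{-1})$ it is commutation of $F$ with the $G$-action $\varphi$; the co-pair of pants and $B_-$ are determined from these by $\eta$-duality, and the general annuli $C_{\varepsilon,\mu}(\alpha;\beta)$ and holed disks $D_{\varepsilon,\mu,\nu}(\alpha,\beta;\rho,\delta)$ reduce to the basic pieces by the very gluings used in the proof of Lemma \ref{oplik}. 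Composing these commutations along the decomposition via Axiom (3), and handling closed or disconnected components by Axiom (5), gives the commutation for all of $W$. Hence $\{\rho_M\}$ is an isomorphism of HQFTs inducing $F$.

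I expect the main obstacle to be precisely this topological reduction: producing a decomposition of an arbitrary $2$-dimensional $X$-cobordism into standard elementary $X$-cobordisms, i.e.\ synchronizing a handle decomposition of the surface with a homotopy of the classifying map that is standard on each handle and compatible with the cutting circles. Asphericity of $X$ is what makes the homotopy step possible, and the bookkeeping, though routine, is the lengthy part; it can be organized efficiently using the skeleton moves and $G$-systems already developed in Sections \ref{Aps1}--\ref{statesu}, and is in essence the content of the equivalence-of-categories statement cited from \cite{Tu2}.
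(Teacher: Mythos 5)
Your proposal is correct and takes essentially the same route as the paper: define $\rho_M$ on connected $X$-curves via the canonical identification $A_M\cong L_{\alpha(M)}$ and the given algebra isomorphism, extend monoidally, and then verify the cobordism squares by decomposing each $2$-dimensional $X$-cobordism into disks with $\leq 2$ holes together with annuli and matching each elementary piece to one defining property of a crossed $G$-algebra isomorphism (unit, multiplication, inner product, $G$-action). The paper organizes this slightly differently, by first explicitly reconstructing $\tau$ on all annuli and disks from $L$ and then citing that reconstruction when checking the squares, but the underlying decomposition-and-check argument is the same.
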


\begin{proof}
We first    show how to reconstruct a 2-dimensional $X$-HQFT
   $(A,\tau)$
  from its underlying crossed
$\orlin$-algebra $L=(L,\eta,\varphi)$. The discussion at the
beginning of  Section \ref{algebral}  shows that $A$ is determined
by $L$. We need only to reconstruct $\tau$.

It is clear that every 2-dimensional  $X$-cobordism $W$ can be
obtained by gluing several   $X$-cobordisms whose underlying
surfaces are disks with $\leq 2$ holes. Axioms of an HQFT imply that
$\tau(W)$ is determined by the values of $\tau$ on such
$X$-cobordisms and the restrictions $\{\eta_\alpha:L_\alpha\otimes
L_{\alpha^{-1}} \to K\}_{\alpha\in G}$ of $\eta$. It remains to show
that the values of $\tau$ on the  disks with $\leq 2$ holes are
determined by $L$.

We begin by computing   $\tau $ for annuli. An  $X$-annulus  is
$X$-homeomorphic  to  $C_{-+} (\alpha; \beta)$,  $C_{--} (\alpha;
\beta)$, or   $C_{++} (\alpha; \beta)$ with  $\alpha,\beta \in
\orlin$. By definition,     $ \tau (C_{-+} (\alpha; \beta))
=\varphi_{\beta^{-1}}\vert_{L_\alpha}$.
  The annulus $C_{--}(\alpha; \beta)$  can be obtained
by  gluing   $C_{-+}(\alpha; \beta)$ and
$C_{--}(\beta^{-1}\alpha\beta; 1)$ along
$(C^1_+,\beta^{-1}\alpha\beta)\approx
(C^0_-,\beta^{-1}\alpha\beta)$. Axiom (3) of an HQFT  and the
definition of $\eta$  imply that   $\tau (C_{--}(\alpha; \beta)):
L_\alpha \otimes L_{\beta^{-1}\alpha^{-1}\beta} \to  K $  carries
$a\otimes b$ to $\eta (\varphi_{\beta^{-1}} (a),b)$ for any  $a\in
L_\alpha $, $b\in L_{\beta^{-1}\alpha^{-1}\beta}$. To compute  the
vector $$\tau (C_{++}(\alpha; \beta))\in
 \Hom_K (K, L_\alpha\otimes
L_{\beta^{-1}\alpha^{-1}\beta})=L_\alpha\otimes
L_{\beta^{-1}\alpha^{-1}\beta },$$  we expand it  as a finite sum
$\sum_i p_i\otimes q_i$, where $p_i\in  L_\alpha$ and $q_i\in
L_{\beta^{-1}\alpha^{-1}\beta }$. The
 gluing of $C_{--}(\alpha^{-1};1)$ to
$C_{++}(\alpha; \beta)$ along  $(C^1_-,\alpha)\approx
(C^0_+,\alpha)$ yields
  $C_{-+}(\alpha^{-1};\beta)$. The axioms of an HQFT   yield that $\sum_i\eta_\alpha (a, p_i)
\,q_i=\varphi_{\beta^{-1}}(a)$ for all $a\in L_{ \alpha^{-1}}$.
 Since   $\eta_\alpha$   is non-degenerate,
this   uniquely determines     $\tau (C_{++}(\alpha; \beta))=\sum_i
p_i\otimes q_i$.

There are two $X$-disks:   $B_+$ as in the proof of Lemma
\ref{oplik}   and $B_-$    obtained from $B_+$ by reversing
 the orientation of   the boundary.    The vector $\tau (B_+)=1_L
 $
  is    determined by $L$.
The $X$-disk $B_-$ may be obtained by   gluing   $B_+$ and $C_{--}
(1;1)$  along $\partial B_+\approx C^0_-$. Therefore    $\tau (B_-):
L_1 \to K $ carries  any $a\in L_{1}$ to $\eta (1_L,a)$.

An  $X$-disk with 2 holes  $D$   splits
  along    disjoint  loops parallel to its boundary
components into  three $X$-annuli and a smaller $X$-disk  with 2
holes~$D'$. Choosing appropriate orientations of the    loops and an
appropriate map $D\to X$ in the given homotopy class, we can ensure
that $D'$ is $X$-homeomorphic to $D_{--+} (\alpha, \beta;1, 1)$ for
some  $\alpha, \beta\in \orlin$. The homomorphism $\tau (D')$ and
the values of $\tau $ on the annuli are then determined by $L$. The
axioms of an HQFT allow us to recover $\tau (D) $. We conclude that
$(A,\tau)$ can be entirely reconstructed from $L$.

We now prove the claim of the theorem. Let $(A,\tau)$ and
$(A',\tau')$ be 2-dimensional $X$-HQFTs with underlying crossed
$G$-algebras $L, L'$, respectively. An isomorphism   $\rho: L\to L'$
defines  in the obvious way an isomorphism $ A_M\to A'_{M}$ for any
connected
  $X$-curve $M$. This
extends to non-connected $M$ via~$\otimes$. We claim that these
   isomorphisms $\{ A_M\to A'_M\}_M$ make
the natural square diagrams associated with $X$-homeomorphisms and
$X$-cobordisms   commutative. The part concerning the homeomorphisms
is obvious. Each $X$-cobordism   can be obtained by gluing
$X$-surfaces of type $B_+ $,  $D_{--+} (\alpha, \beta;1, 1)$, and
  annuli. It suffices to check the commutativity of the
  corresponding
diagrams. For $B_+$ and $D_{--+} (\alpha, \beta;1,  1)$, the
commutativity follows from the assumption that $\rho $  is an
algebra isomorphism. For   annuli, the commutativity follows from
the computations above because $\rho $ preserves the inner product
and commutes with the action of $\orlin$.
\end{proof}

\subsection{Remark}\label{Eqcat++} The computation of $\tau(B_\pm)$  in the proof of
Lemma \ref{liftii} allows us to compute $\tau(S^2) \in K$    in
terms of the underlying crossed $G$-algebra $(L,\eta, \varphi)$ of
  $(A, \tau)$. The unique homotopy class of maps
$S^2\to X$ turns   $S^2$ into an $X$-manifold. This $X$-manifold can
be  obtained from $B_+$ and $B_-$ by gluing along the boundary.
Therefore the homomorphism $K\to K$,  $k\mapsto \tau(S^2)\, k $ is
the composition of $\tau(B_+):K\to L_1$ with $\tau(B_-):L_1\to K$.
Thus, $\tau(S^2)=\eta(1_L,1_L)$.

\subsection{Transformations}\label{Eqcat++-}  The following   lemmas  show   that
under the passage to the underlying  algebra, the direct sums,
rescalings, and transfers of   HQFTs correspond to the direct sums,
 rescalings, and transfers of algebras.

 \begin{lemma}\label{splitd}
If $(A^{{1}}, \tau^{{1}})$ and $(A^{{2}}, \tau^{{2}})$  are
$X$-HQFTs with underlying crossed $G$-algebras $L^{{1}}$ and $
L^{{2}}$, respectively,  then the underlying crossed $G$-algebra of
the  direct sum $(A^{{1}}, \tau^{{1}})\oplus (A^{{2}}, \tau^{{2}})$
 is $L^{{1}}\oplus
L^{{2}}$. Moreover, if the underlying crossed $G$-algebra $L$ of a
2-dimensional $X$-HQFT $(A,\tau)$ splits as a direct sum of two
crossed $G$-algebras $L=L^{{1}}\oplus L^{{2}}$, then $(A,\tau)$
  splits as a direct sum of two $X$-HQFTs with underlying
crossed $G$-algebras $L^{{1}}$ and $ L^{{2}}$.
\end{lemma}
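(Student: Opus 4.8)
The plan is to treat the two assertions of the lemma separately: the first is an unwinding of the definition of the direct sum of HQFTs, while the second requires building the two summand HQFTs by hand from a pair of central idempotents of $L$ and then recognizing their direct sum as $(A,\tau)$. For the first assertion, recall from Section~\ref{algebral} that the underlying crossed $G$-algebra of an $X$-HQFT $(A,\tau)$ is obtained by setting $L_\alpha=A_{(S^1,\alpha)}$ and reading off its multiplication, inner product, and $G$-action as the images under $\tau$ of the pair of pants $D_{--+}(\alpha,\beta;1,1)$, the annulus $C_{--}(\alpha;1)$, and the annulus $C_{-+}(\alpha;\beta^{-1})$. For $(A^1,\tau^1)\oplus(A^2,\tau^2)$ one has $(A^1\oplus A^2)_{(S^1,\alpha)}=L^1_\alpha\oplus L^2_\alpha$ directly from the definition of the direct sum in Section~\ref{ee3.3+}, and since $(\tau^1\oplus\tau^2)(W)=i^1\tau^1(W)p^1+i^2\tau^2(W)p^2$ for connected $W$ and is multiplicative on disjoint unions, substituting each of the three surfaces above exhibits the product, the form $\eta$, and the maps $\varphi_\alpha$ of the resulting algebra as coordinate-wise with respect to the splitting $L^i_\alpha$. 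This is precisely the direct sum $L^1\oplus L^2$ of crossed $G$-algebras of Section~\ref{crocroa}. I expect this part to be routine.

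For the second assertion, write $e_i\in L_1$ for the image of $1_{L^i}$ under $L^i\hookrightarrow L$; then $e_1,e_2$ are orthogonal central idempotents with $e_1+e_2=1_L$, and since the splitting $L=L^1\oplus L^2$ is $G$-equivariant and $\varphi_\alpha$ preserves the grading, $\varphi_\alpha(e_i)=e_i$ for all $\alpha$. For a connected $X$-curve $M$ with $\alpha(M)=\alpha$ set $A^i_M=e_iL_\alpha\subset L_\alpha=A_M$ and $P^i_M=\mu_{e_i}$; extend to disconnected $M$ by tensoring over components and put $A^i_\emptyset=K$. The heart of the argument is the claim that for every \emph{connected} $X$-cobordism $W$ the vector $\tau(W)\in A_{\partial W}=\bigotimes_k A_{N_k}$ (the product over the components $N_k$ of $\partial W$) is monochromatic, i.e.\ lies in $\sum_i\bigotimes_k A^i_{N_k}$; equivalently, the projector $\bigotimes_k P^{i_k}_{N_k}$ kills $\tau(W)$ unless all $i_k$ agree. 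Combined with block-diagonality of the boundary pairings, $\eta(e_ia,e_jb)=\eta(e_ie_jab,1_L)=\delta_{ij}\eta(e_ia,b)$ (using the defining property $\eta(a,b)=\eta(ab,1_B)$ of an inner product and centrality of $e_j$), this forces $\tau(W)$ to carry $A^i_{M_0}$ into $A^i_{M_1}$, so that restriction-corestriction gives well-defined maps $\tau^i(W)\colon A^i_{M_0}\to A^i_{M_1}$.

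The monochromaticity claim is proved by the pants decomposition already used in the proof of Lemma~\ref{liftii}: every $X$-cobordism is glued from copies of $B_\pm$, the annuli $C_{\varepsilon,\mu}(\alpha;\beta)$, and the pants $D_{--+}(\alpha,\beta;1,1)$. On the generators it is immediate: $\tau(B_+)=1_L=e_1+e_2$ is monochromatic on its single boundary circle; $\tau(D_{--+})$ is multiplication, which annihilates mixed-colour inputs and preserves colour because $L^1L^2=L^2L^1=0$; $\tau(C_{-+}(\alpha;\beta^{-1}))=\varphi_\beta$ commutes with $\mu_{e_i}$ since $\varphi_\beta(e_i)=e_i$; and the remaining annuli and $B_-$ are obtained from these together with the block-diagonal forms $\eta_\alpha$ by gluing. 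Closure under gluing holds because a gluing contracts two oppositely oriented boundary circles along an $X$-homeomorphism $f$ with $f_\#$ commuting with the $P^i$ by Lemma~\ref{l1908}, and contracting monochromatic states through a block-diagonal $\eta$ retains only the matching-colour summands; disjoint union is trivial, each connected piece staying monochromatic on its own boundary.

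Granting this, the HQFT axioms for $(A^i,\tau^i)$ follow from those for $(A,\tau)$: Axioms~(4),(5) from the tensor-product definition of $A^i$; Axioms~(1),(2) from those for $(A,\tau)$; the gluing axiom because $\tau(W_0)$ already lands in $A^i_N$, so inserting the projection onto $A^i_N$ before applying $\tau(W_1)f_\#$ changes nothing; the cylinder axiom because $\tau(M\times[0,1])$ is the identity of $A_M$ and hence restricts to the identity of $A^i_M$; and Axiom~(7) from homotopy invariance of $\tau$. Finally $(A^1,\tau^1)\oplus(A^2,\tau^2)=(A,\tau)$: on curves because $e_1L_\alpha\oplus e_2L_\alpha=L_\alpha$, and on cobordisms because $(\tau^1\oplus\tau^2)(W)=\mu_{e_1}\tau(W)\mu_{e_1}+\mu_{e_2}\tau(W)\mu_{e_2}$ equals $\tau(W)$, the off-diagonal terms $\mu_{e_i}\tau(W)\mu_{e_j}$ with $i\ne j$ vanishing by the same monochromaticity-plus-block-diagonality computation; since the underlying crossed $G$-algebras of $(A^1,\tau^1)$ and $(A^2,\tau^2)$ are visibly $L^1$ and $L^2$, this completes the proof. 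The main obstacle is stating the monochromaticity claim at exactly the level of generality that simultaneously yields well-definedness of $\tau^i$, the gluing axiom, and the final identification --- in particular, making the colour labels on boundary circles interact correctly with orientation reversal and the $\eta$-pairing when boundary components are merged by gluing.
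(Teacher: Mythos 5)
Your proposal is correct and follows essentially the same route as the paper: define $A^i_M$ from the splitting $L=L^1\oplus L^2$, verify that $\tau$ respects this decomposition on the generating cobordisms (disks with at most $2$ holes), and propagate by the pants decomposition from the proof of Lemma~\ref{liftii}. Your ``monochromaticity'' formulation is a repackaging, via the boundary pairing $\eta$, of the paper's claim that $\tau(W)$ expands uniquely as $i^1_N\tau^1(W)p^1_M+i^2_N\tau^2(W)p^2_M$, and your closure-under-gluing argument and axiom-by-axiom verification simply spell out the paper's terse ``it is easy to check.''
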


\begin{proof} The first claim follows from the definitions. We prove
the second claim. The splitting   $L_\alpha=L^{{1}}_\alpha\oplus
L^{{2}}_\alpha$ for all $\alpha\in G$  induces a splitting
$A_M=A^{{1}}_M\oplus A^2_M$ for any connected $X$-curve $M$. For a
non-connected $X$-curve $M$, we define $A^1_M$, $ A^2_M$  via Axiom
(4) of an HQFT.  This yields
 a
natural embedding $i^k_M:A^k_M\to A_M$  and a natural projection
$p^k_M:A_M \to A^k_M  $,  where $k=1,2$. Observe now that given a
connected $2$-dimensional $X$-cobordism $(W,M,N)$, the homomorphism
$\tau(W): A_M \to A_N$ expands uniquely as   $ i^1_N\tau^1(W)p^1_M
+i^2_N\tau^2(W)p^2_M $ for some   $\tau^k(W)\in \Hom (A^k_M,
A^k_N)$,  where $k=1,2$. For  cobordisms
 of type $D_{--+}$, $C_{--}$,   $C_{-+}$, and $B_+$ this follows from the
splitting $L=L^1\oplus L^2$. For   other cobordisms,
   this is obtained by splitting them  into disks with $\leq 2$ holes  as in the proof of Lemma \ref{liftii}.
 It is easy to check that $(A^{{1}}, \tau^{{1}})$ and $(A^{{2}},
 \tau^{{2}})$ are $X$-HQFTs and $(A,\tau)$ is their direct sum.
\end{proof}

 \begin{lemma}\label{split+resc}
If  an  HQFT $(A', \tau')$ is obtained from a
2-dimensional HQFT $(A,\tau)$ by $k$-rescaling with $k\in K^*$, then the underlying crossed $G$-algebra of
$(A', \tau')$ is obtained from the underlying crossed $G$-algebra of
$(A, \tau)$ by  $k$-rescaling. \end{lemma}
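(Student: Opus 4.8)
The plan is to track each structural ingredient of the underlying crossed $G$-algebra through the $k$-rescaling, using the explicit generating cobordisms from Section \ref{algebral}. Write $(L,\eta,\varphi)$ for the underlying crossed $G$-algebra of $(A,\tau)$ and $(L',\eta',\varphi')$ for that of $(A',\tau')$. By the definition of $k$-rescaling in Section \ref{ee3.3+}, the vector spaces $A_M$ and the action of $X$-homeomorphisms are untouched, so immediately $L'_\alpha=L_\alpha$ for all $\alpha\in G$ as a $G$-graded vector space. It then remains only to compare the multiplications, the bilinear forms $\eta,\eta'$, the $G$-actions, and the units, each of which is by construction the value of $\tau$ (resp.\ $\tau'$) on one of the standard $X$-cobordisms $D_{--+}(\alpha,\beta;1,1)$, $C_{--}(\alpha;1)$, $C_{-+}(\alpha;\beta^{-1})$, $B_+$, or the action of an $X$-homeomorphism on such.

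The computation is then a bookkeeping of the rescaling exponent $\tfrac12(\chi(W)+b_0(M)-b_0(N))$ for these four cobordisms. The pair of pants $D_{--+}(\alpha,\beta;1,1)$ has $\chi=-1$, bottom base of $b_0=2$, top base of $b_0=1$, giving exponent $(-1+2-1)/2=0$. The annulus $C_{--}(\alpha;1)$ has $\chi=0$, bottom base of $b_0=2$, empty top base, giving exponent $(0+2-0)/2=1$. The annulus $C_{-+}(\alpha;\beta^{-1})$ has $\chi=0$ and both bases connected, giving exponent $(0+1-1)/2=0$. The disk $B_+$ has $\chi=1$, empty bottom base, top base of $b_0=1$, giving exponent $(1+0-1)/2=0$. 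Hence multiplication in $L$, the action $\varphi$ of $G$, and the unit $1_L$ are all preserved by rescaling, whereas each $\eta_\alpha=\tau(C_{--}(\alpha;1))$ is multiplied by $k$, i.e.\ $\eta'=k\eta$. Therefore $(L',\eta',\varphi')=(L,k\eta,\varphi)$, which by Section \ref{crocroa} is exactly the $k$-rescaling of $(L,\eta,\varphi)$, as claimed.

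I do not expect a genuine obstacle here: the whole argument is the four-line exponent computation above together with the remark that the rescaled $\tau'$ really is an HQFT, which is already granted in Section \ref{ee3.3+} (its consistency under gluing being exactly the additivity of $\chi(W)+b_0(M)-b_0(N)$ along a gluing). The only point needing mild care is matching the orientation conventions so that bottom and top bases, and hence the numbers $b_0(M)$ and $b_0(N)$, are read off correctly for each of the four cobordisms; this is immediate from the conventions fixed at the start of Section \ref{algebral}.
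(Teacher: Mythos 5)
Your proof is correct and follows the same route as the paper: rescaling leaves the graded vector spaces and the action of homeomorphisms unchanged, so one need only track the exponent $\tfrac12(\chi(W)+b_0(M)-b_0(N))$ on the generating cobordisms $D_{--+}$, $C_{--}$, $C_{-+}$ (and $B_+$), finding it vanishes except for $C_{--}$ where it equals $1$. Note, incidentally, that your computation for $C_{--}$ (giving $0+2-0=2$, hence exponent $1$) is the correct one; the paper's printed value $0+2-1=1$ for $\chi(W)+b_0(M)-b_0(N)$ there is evidently a slip, since the top base of $C_{--}$ is empty and that value would give a non-integral exponent.
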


\begin{proof} For a cobordism  $(W,M,N)$ of type
$D_{--+}$, $C_{--}$, and $C_{-+}$, the number $\chi(W)+b_0(M)-b_0(N)$ is equal to $-1+2-1=0$, $0+2-1=1$, and $0+1-1=0$, respectively.
\end{proof}

\begin{lemma}\label{trac}  Let $H\subset G$ be a subgroup   of finite
index, $ a\in H^{2}(H; K^*)$, and $k\in K^*$.   The underlying
crossed $\orlin$-algebra   of the $X$-HQFT $(A^{G,H, a}, \tau^{G,H,
a, k})$ is isomorphic to the transfer $L^{G,H, -a,k}$ of the crossed
$H$-algebra $L^{-a,k}$.
\end{lemma}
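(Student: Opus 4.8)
The plan is to route the statement through the underlying‑algebra functor and reduce it to Theorem~\ref{liftii+98}. First I would eliminate the rescaling parameter. By construction $(A^{G,H,a},\tau^{G,H,a,k})$ is the $k$‑rescaling of the transfer $(A^{G,H,a},\tau^{G,H,a,1})$, while $L^{G,H,-a,k}$ is by definition the $k$‑rescaling of $L^{G,H,-a}=L^{G,H,-a,1}$; so by Lemma~\ref{split+resc} it is enough to treat the case $k=1$, i.e.\ to show that the underlying crossed $G$‑algebra of the $X$‑HQFT $(A^{G,H,a},\tau^{G,H,a,1})$ is the transfer $L^{G,H,-a}$ of $L^{-a}$.

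Next I would set up the comparison on the level of targets. Represent $a$ by a $K^{*}$‑valued $2$‑cocycle $\theta$ and set $\theta'=q^{*}\theta$, a cocycle on $\widetilde X=K(H,1)$ which again represents $a\in H^{2}(H;K^{*})=H^{2}(\widetilde X;K^{*})=H^{2}(Y;K^{*})$. Since the state‑sum HQFT of Section~\ref{ee3.3} depends on its structure map only through the pulled‑back cocycle, for every $X$‑manifold or $X$‑cobordism $Z$ and every lift $\widetilde g\colon Z\to\widetilde X$ of the structure map the $Y$‑HQFT $(A^{\theta},\tau^{\theta})$ evaluated on $(Z,q\widetilde g)$ agrees with the $\widetilde X$‑HQFT $(A^{\theta'},\tau^{\theta'})$ evaluated on $(Z,\widetilde g)$. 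Consequently the transfer $(A^{G,H,a},\tau^{G,H,a,1})$ is built, summand by summand over lifts, out of $(A^{\theta'},\tau^{\theta'})$, whose underlying crossed $H$‑algebra is $L^{-a}$ by Theorem~\ref{liftii+98} applied to $H$; I denote this $H$‑algebra $L=(L_{\beta})_{\beta\in H}$, so $L_{\beta}=A^{\theta'}_{(S^{1},\beta)}$ and its multiplication, unit, inner product and $H$‑action are those read off from $\tau^{\theta'}$ on the disks $B_{+}$, the disks with two holes $D_{--+}$, and the annuli of Section~\ref{algebral}.

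The heart of the argument is the computation of the underlying crossed $G$‑algebra $\widehat L$ of $(A^{G,H,a},\tau^{G,H,a,1})$ and its identification with the transfer $L^{G,H,-a}$. Fix coset representatives $\{\omega_{i}\}_{i\in H\backslash G}$ as in Section~\ref{transfcro}. For a connected $X$‑curve $(S^{1},\alpha)$, a loop representing $\alpha$ lifts to a loop of $\widetilde X$ based at the point of $p^{-1}(x)$ labelled by $i$ exactly when $\omega_{i}\alpha\omega_{i}^{-1}\in H$, i.e.\ $i\in N(\alpha)$, and the lift then represents $\omega_{i}\alpha\omega_{i}^{-1}$; hence the transfer definition together with the previous paragraph gives
\[
\widehat L_{\alpha}=\bigoplus_{i\in N(\alpha)}A^{\theta'}_{(S^{1},\,\omega_{i}\alpha\omega_{i}^{-1})}=\bigoplus_{i\in N(\alpha)}L_{\omega_{i}\alpha\omega_{i}^{-1}}=(L^{G,H,-a})_{\alpha},
\]
matching the graded vector spaces. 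To match the structure maps I would evaluate $\tau^{G,H,a,1}$ on the cobordisms $B_{+}$, $D_{--+}(\alpha,\beta;1,1)$, $C_{--}(\alpha;1)$ and $C_{-+}(\alpha;\beta^{-1})$ that define the unit, multiplication, inner product and $G$‑action (Lemma~\ref{liftii}, Section~\ref{algebral}). In each case the structure map to $X$ has only finitely many lifts to $\widetilde X$, indexed by the $i$ meeting the relevant $N(\cdot)$ conditions; the $i$‑th lift restricts on each boundary circle to the standard $X$‑curve carrying the $\omega_{i}$‑conjugate of the original label, and the lifted cobordism is the corresponding standard cobordism with all labels conjugated by $\omega_{i}$ (for $C_{-+}(\alpha;\beta^{-1})$ the twisting element $\beta$ additionally moves the coset index $i\mapsto\beta(i)$, yielding precisely the element $\beta_{i}\in H$ of Section~\ref{transfcro}). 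Thus $\tau^{G,H,a,1}$ of each of these cobordisms equals the direct sum over the admissible $i$ of $\tau^{\theta'}$ of the conjugated cobordism; comparing with the coordinate‑wise multiplication, the orthogonal‑sum inner product $\widetilde\eta$ and the action $\widetilde\varphi=\bigoplus_{j}\varphi_{\alpha_{j}}$ of the transfer construction (Section~\ref{transfcro}), we obtain that $\widehat L$ and $L^{G,H,-a}$ have the same unit, multiplication, inner product and $G$‑action, which by Lemma~\ref{liftii} is all that is needed.

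The part I expect to be delicate is the bookkeeping of coset indices in the last step: matching the lifts of the annuli $C_{-+}(\alpha;\beta^{-1})$ and of the disks with holes, together with the rebasing of the lifted structure maps, against the combinatorial data $N(\alpha)$, $j\mapsto\alpha(j)$, $\alpha_{j}=\omega_{\alpha(j)}\,\alpha\,\omega_{j}^{-1}$ of Section~\ref{transfcro}, and verifying that the identification $A^{\theta}_{(Z,q\widetilde g)}=A^{\theta'}_{(Z,\widetilde g)}$ is natural enough to transport the HQFT structure maps. Once these combinatorial identifications are in place, no further checking of the crossed‑$G$‑algebra axioms is required, since $L^{G,H,-a}$ is already known to be a crossed $G$‑algebra by Lemma~\ref{l2-jiiif}.
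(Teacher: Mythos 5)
Your proposal follows essentially the same strategy as the paper's proof: reduce to $k=1$ via Lemma~\ref{split+resc}, build the transfer over the covering $p:\widetilde X\to X$, identify the graded vector spaces by counting lifts of loops, and compare the operations evaluated on $B_+$, $D_{--+}$, and the annuli. The one place you genuinely diverge is the $G$-action: you propose to compute $\varphi'_\beta$ directly by lifting $C_{-+}(\alpha;\beta^{-1})$, re-basing in $Y$, and matching the resulting twisting element against $\beta_j=\omega_{\beta(j)}\beta\omega_j^{-1}$ from Section~\ref{transfcro}, whereas the paper, after establishing the algebra and inner product agree, finishes by observing that both $\varphi'_\alpha$ and $\widetilde\varphi_\alpha$ send each summand $L_{\omega_j\beta\omega_j^{-1}}$ to the correct target summand and then invokes the uniqueness of a $G$-action satisfying the crossed-algebra axioms (citing the argument of Section~\ref{exxama}); the paper's shortcut spares exactly the delicate coset bookkeeping you flag, at the cost of an appeal to a uniqueness statement. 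Your explicit passage through $\theta'=q^*\theta$ on $\widetilde X$ (rather than keeping the cocycle on $Y$) is a harmless clarification of a point the paper leaves implicit.
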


\begin{proof} By the previous lemma, it is enough to consider the case $k=1$.
Let $p:\widetilde X \to X$ be the covering corresponding to
$H\subset G$. Set $Y=\widetilde X/p^{-1}(x)$ and consider the
$Y$-HQFT $(A^a, \tau^a)$ determined by $ a\in H^{2}(H; K^*) =
H^{2}(\widetilde X; K^*)=H^{2}(Y; K^*)$. By definition, the HQFT
$(A'=A^{G,H, a}, \tau'=\tau^{G,H, a,1})$   is   the
 transfer of    $(A^a, \tau^a)$ to~$X$.  Let $L\cong L^{-a}$ be the underlying  crossed $H$-algebra
 of $(A^a, \tau^a)$.  We compute the underlying  crossed
 $G$-algebra $(L', \eta', \varphi')$
 of $(A',
\tau')$.   Pick a base point  $\widetilde x \in p^{-1}(x)$ of
$\widetilde X$. For each
  $y\in p^{-1}(x)$ fix a path   in $\widetilde X$ leading from $\widetilde x$  to $y$.
  Let $\omega_y\in G=\pi_1(X,x)$ be the homotopy class of the
   loop obtained by projecting this path to $X$. Then $\{\omega_y\}_{y\in p^{-1}(x)}$ is a set of representatives
   of the right $H$-cosets in~$G$. For an   $X$-curve $M=(M,
 g:M\to X)$, the vector space $A'_M $ is a direct sum of copies of
 $K$ numerated by
the lifts of $g$
  to  $\widetilde X$. If $M$ is connected and $g:M\to X$ represents
   $\alpha\in G$, then the lifts    of $g$ to $\widetilde X$ are numerated by $y\in p^{-1}(x)$ such that  $\omega_y\alpha
  \omega_y^{-1}\in H$. Thus, $L'_M$ is the direct sum of copies of
  $K$ numerated by such $y\in p^{-1}(x)$. Comparing with the definition of the transfer
  $(\widetilde L, \widetilde \eta, \widetilde \varphi)$ of $L$ (determined by the
   same set of representatives $\{\omega_y\}_{y }$),
  we obtain  that $L' =\widetilde L $ as $G$-graded vector
  spaces. Multiplication in $L'$ is defined using the maps $D_{--+}\to X$
  carrying the segments  $ty,tz\subset D_{--+}$ to the base point $x$.
A  lift  of such a map  to $\widetilde X$   carries these  segments
  to the same point of $p^{-1}(x)$ and induces multiplication in the
  corresponding copy of $K$. Therefore,  multiplication in  $L'$
  is a direct sum of multiplications in the copies of $K$ indexed by
  the same point of $p^{-1}(x)$. Hence $L' =\widetilde L $ as
  $G$-graded algebras. That $\eta'=\widetilde \eta$ is proven
  similarly. The definition of the transfer   implies that for any $\alpha\in G$, both
  $\varphi'_\alpha$ and $\widetilde \varphi_\alpha$ carry   $K=L_{\omega_j \beta
  \omega_j^{-1}}\subset L'_\beta=\widetilde L_\beta$ to $K=L_{\omega_{\alpha(j)} \alpha \beta \alpha^{-1}
   \omega_{\alpha(j)}^{-1}}\subset L'_{\alpha\beta\alpha^{-1}}=\widetilde L_{\alpha\beta\alpha^{-1}}$, where we use
    the  notation of Section
   \ref{transfcro}. The equality $\varphi'_\alpha=\widetilde
   \varphi_\alpha$ follows then from the uniqueness
   of the action of $G$ on $L$   satisfying the axioms   of a crossed $G$-algebra (Section \ref{exxama}).
 \end{proof}

  \section{The $G$-center of a biangular $G$-algebra}\label{thegcenterb}

  Let $B=\oplus_{\alpha \in G}\,
  B_\alpha$ be   a biangular $G$-algebra.  We compute    the crossed $G$-algebra underlying the
  HQFT $(A^\circ_B, \tau^\circ_B)$ derived from $B$.

  We begin with algebraic
  preliminaries. For   $\alpha\in G$,   define a  $K$-linear homomorphism
$\psi_\alpha:\rup \to \rup$ by
 $\psi_\alpha(a)=\sum_{i } p_i^{\alpha} a q_i^{\alpha}   $,
where $\sum_i p_i^{\alpha}\otimes q_i^{\alpha}=b_\alpha \in
\rup_{\alpha} \otimes \rup_{\alpha^{-1}}$ is the vector derived from
the inner product $\eta=\eta_B$ in Section \ref{BGa}. Here $i$ runs
over a finite set of indices~${J}_\alpha$
 and $p_i^{\alpha} \in \rup_{\alpha}, q_i^{\alpha} \in
\rup_{\alpha^{-1}}$. Clearly,  $\psi_\alpha (
  \rup_\beta)\subset  \rup_{\alpha \beta \alpha^{-1}}$ for all $\beta\in G$.
  By
(\ref{firstide}),  $\psi_\alpha(1_{ \rup})=1_{ \rup}$. The symmetry
of $\eta$ implies that
$$\psi_{\alpha^{-1}}(a)=
 \sum_{j\in {J}_{\alpha^{-1}}}  p_j^{\alpha^{-1}}a q_j^{\alpha^{-1}}
=\sum_{i\in {J}_\alpha} q_i^{\alpha} a p_i^{\alpha}\, .
 $$
 The following
lemma  exhibits the  main
  properties of the homomorphisms  $ \{\psi_\alpha  \}_\alpha$.

\begin{lemma}\label{firstbuiang} For any $\alpha, \beta \in \orlin$ and $ a,b\in \rup$,
\begin{equation}\label{1.3.a}  \eta (\psi_\alpha (a), b)= \eta(a, \psi_{\alpha^{-1}} (b)),
 \end{equation}
\begin{equation}\label{1.3.b}\psi_\alpha (a\psi_{\beta} (b))= \psi_{ \alpha}(a)
\,\psi_{\alpha\beta} (b),  \end{equation}
\begin{equation}\label{1.3.c}\psi_\alpha \, \psi_\beta (b)=   \psi_{\alpha
\beta} (b).  \end{equation} If   $b\in \rup_\beta$,
then for any $\alpha \in \orlin$,
\begin{equation}\label{1.3.d}\psi_{ \alpha \beta}(b)=\psi_\alpha (b)
 \end{equation} and  for any $\alpha \in \orlin$, $a \in
\rup$,
\begin{equation}\label{1.3.e}\psi_\alpha(a) \,b= b\, \psi_{\beta^{-1} \alpha} (a).
 \end{equation} For any  $\alpha,\beta \in \orlin$ and $c\in
\rup_{\alpha\beta \alpha^{-1} \beta^{-1}}$,
\begin{equation}\label{1.3.f}\Tr\, (\mu_c\, \psi_{\beta}:\rup_{\alpha}\to \rup_{\alpha})=
\Tr\, ( \psi_{\alpha^{-1}} \mu_c:\rup_{\beta}\to \rup_{\beta})\, ,
\end{equation} where  $\mu_c$  is  the homomorphism $B\to B, a\mapsto
  ca$.
\end{lemma}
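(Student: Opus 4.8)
The plan is to isolate a single structural fact about the copairings $b_\alpha=\sum_i p_i^\alpha\otimes q_i^\alpha$ and to deduce all six identities from it together with the elementary properties of $\eta=\eta_B$ recorded in Section~\ref{BGa}. The fact I would establish first is the \emph{composition rule}
\[
b_{\alpha\beta}=\sum_{i,j}(p_i^\alpha p_j^\beta)\otimes(q_j^\beta q_i^\alpha)\qquad(\alpha,\beta\in G).
\]
Both sides lie in $B_{\alpha\beta}\otimes B_{(\alpha\beta)^{-1}}$; since $\eta$ is non-degenerate there, the assignment $u\otimes v\mapsto\bigl(c\mapsto\eta(c,u)\,v\bigr)$ is an isomorphism onto $\End(B_{(\alpha\beta)^{-1}})$ carrying $b_{\alpha\beta}$ to $\id$, so it suffices to check that the right-hand side also has the reproducing property $\sum_{i,j}\eta(c,p_i^\alpha p_j^\beta)\,q_j^\beta q_i^\alpha=c$ for all $c\in B_{(\alpha\beta)^{-1}}$. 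This comes out of $\eta(c,p_i^\alpha p_j^\beta)=\eta(cp_i^\alpha,p_j^\beta)$, the reproducing property of the dual bases of $B_\beta$ and $B_{\beta^{-1}}$, and finally $\sum_i(cp_i^\alpha)q_i^\alpha=c\cdot 1_B$, i.e.\ (\ref{firstide}).

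With the composition rule in hand, the first three identities are ``coalgebraic''. Identity (\ref{1.3.c}) is immediate: substituting $\psi_\beta(b)=\sum_j p_j^\beta b q_j^\beta$ into $\psi_\alpha$ and applying the composition rule gives $\psi_\alpha(\psi_\beta(b))=\sum_{i,j}p_i^\alpha p_j^\beta b q_j^\beta q_i^\alpha=\psi_{\alpha\beta}(b)$. For (\ref{1.3.a}) I would expand $\psi_\alpha(a)$, use that $x\mapsto\eta(x,1_B)$ is a symmetric, hence cyclic, functional with $\eta(u,v)=\eta(uv,1_B)$, together with the symmetry $p_i^{\alpha^{-1}}=q_i^\alpha$, $q_i^{\alpha^{-1}}=p_i^\alpha$ of the copairings, to rewrite $\eta(\psi_\alpha(a),b)$ as $\eta\bigl(a,\sum_i q_i^\alpha b p_i^\alpha\bigr)=\eta(a,\psi_{\alpha^{-1}}(b))$. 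For (\ref{1.3.b}), since $u\otimes v\mapsto u\,b\,v$ is a well-defined linear map out of the relevant graded piece of $B\otimes B$, it suffices to prove the tensor identity $\sum_{i,j}(p_i^\alpha a p_j^\beta)\otimes(q_j^\beta q_i^\alpha)=\sum_{k,l}(\psi_\alpha(a)p_k^\alpha p_l^\beta)\otimes(q_l^\beta q_k^\alpha)$, whose right-hand side is $b_{\alpha\beta}$ with its first leg left-multiplied by $\psi_\alpha(a)$; contracting the second leg against an arbitrary $w\in B_{\alpha\beta}$ via $\eta$ and running the same reproducing manipulation as in the composition rule, both sides collapse to $\psi_\alpha(a)\,w$.

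The remaining identities (\ref{1.3.d}), (\ref{1.3.e}), (\ref{1.3.f}) are ``trace'' identities and I would reduce each, after pairing against $\eta$, to an equality of traces of multiplication operators on graded pieces of $B$. The tools are the trace formula $\Tr(T|_{B_\gamma})=\sum_m\eta(Tp_m^\gamma,q_m^\gamma)$ for dual bases $\{p_m^\gamma\}\subset B_\gamma$, $\{q_m^\gamma\}\subset B_{\gamma^{-1}}$; cyclicity of $\eta(-,1_B)$; and $\Tr(AB)=\Tr(BA)$ for $A\colon V\to W$, $B\colon W\to V$, combined with the commutation of left and right multiplications $\mu_b$, $\rho_b$. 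In (\ref{1.3.f}) the rotation is especially clean: writing out $\Tr(\mu_c\psi_\beta|_{B_\alpha})$, inserting $\psi_\beta(p_m^\alpha)$, cyclically moving $q_m^\alpha$ to the front inside $\eta(-,1_B)$, and recognizing $\sum_m q_m^\alpha(\cdot)p_m^\alpha=\psi_{\alpha^{-1}}(\cdot)$, the expression becomes literally $\sum_j\eta(\psi_{\alpha^{-1}}(cp_j^\beta),q_j^\beta)=\Tr(\psi_{\alpha^{-1}}\mu_c|_{B_\beta})$. For (\ref{1.3.d}) I would first use (\ref{1.3.c}) to reduce to $\psi_\beta(b)=\psi_1(b)$ for $b\in B_\beta$; pairing with $c\in B_{\beta^{-1}}$ and using the trace formula gives $\eta(\psi_\beta(b),c)=\Tr(\mu_b\rho_c|_{B_{\beta^{-1}}})$ and $\eta(\psi_1(b),c)=\Tr(\mu_b\rho_c|_{B_1})$, and these two traces agree by $\Tr(AB)=\Tr(BA)$ applied to $B_1\to B_{\beta^{-1}}\to B_1$ (right multiply by $c$, then left multiply by $b$); non-degeneracy then gives the claim. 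Identity (\ref{1.3.e}) goes the same way in the case $\alpha=1$ (pairing turns it into $\Tr(\mu_{bw}\rho_z|_{B_1})=\Tr(\mu_{wb}\rho_z|_{B_{\beta^{-1}}})$, again an instance of $\Tr(AB)=\Tr(BA)$), and the general case follows by replacing $a$ with $\psi_\alpha(a)$ and invoking (\ref{1.3.c}).

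The hard part will be bookkeeping rather than any single idea: every sum runs over a basis of a specific graded piece $B_\gamma$, and the $G$-degrees must be matched through each cyclic rotation and each factorization of a trace, so the places where something can actually go wrong are the proof of the composition rule (pinning down the correct uniqueness statement for $b_{\alpha\beta}$) and the degree-matching in the trace factorizations used for (\ref{1.3.d}) and (\ref{1.3.e}).
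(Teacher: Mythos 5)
Your proof is correct but takes a genuinely different route from the paper. The paper's pivot is the ``slide'' identity (its equation (\ref{1.3.h})): $\sum_i p_i^\alpha\otimes q_i^\alpha b = \sum_j bp_j^{\beta\alpha}\otimes q_j^{\beta\alpha}$ for $b\in B_{\beta^{-1}}$, which transports an element from the second leg of $b_\alpha$ to the first leg of $b_{\beta\alpha}$; from it the paper reads off (\ref{1.3.e}) and (\ref{1.3.d}) in one line each and deduces (\ref{1.3.a})--(\ref{1.3.c}) with short computations, reserving the trace formula only for (\ref{1.3.f}). Your pivot is instead the Frobenius-type composition identity $b_{\alpha\beta}=\sum_{i,j}p_i^\alpha p_j^\beta\otimes q_j^\beta q_i^\alpha$. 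Both key identities are verified by the same reproducing/non-degeneracy device (contract a leg against an arbitrary test vector, collapse via (\ref{firstide}) and (\ref{secide})), so neither is harder to establish. The trade-off is downstream: the composition rule makes (\ref{1.3.c}) immediate and handles (\ref{1.3.b}) cleanly by a tensor-contraction argument, but you then recover (\ref{1.3.d}) and (\ref{1.3.e}) by grading-aware trace manipulations, using $\Tr(AB)=\Tr(BA)$ together with the commutation of left and right multiplications on the graded pieces (e.g.\ pairing (\ref{1.3.d}) against $c\in B_{\beta^{-1}}$ and comparing $\Tr(\mu_b\rho_c)$ on $B_1$ and on $B_{\beta^{-1}}$), whereas the slide formula gives these for free. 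Your treatment of (\ref{1.3.f}) coincides with the paper's. I checked the contraction argument behind the composition rule and the degree bookkeeping you flagged as the risky step in (\ref{1.3.d}) and (\ref{1.3.e}); it goes through.
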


 \begin{proof}  We first check   that   for any $\beta\in G$ and $b\in \rup_{\beta^{-1}}$,
\begin{equation}\label{1.3.h}\sum_{i\in {J}_\alpha} p_i^\alpha \otimes q_i^\alpha b= \sum_{j\in
{J}_{\beta \alpha}} b p_j^{\beta \alpha} \otimes  q_j^{\beta
\alpha}. \end{equation} Both sides belong to $\rup_\alpha \otimes
\rup_{\alpha^{-1}\beta^{-1}}$ and it suffices to prove that they
determine equal functionals on the dual space $\rup_{\alpha^{-1}}
\otimes \rup_{ \beta\alpha}$. Pick $x\in \rup_{\alpha^{-1}}$ and
$y\in \rup_{ \beta\alpha}$. By (\ref{secide}), the left-hand side of
(\ref{1.3.h}) evaluated on $x\otimes y$ gives
$$ \sum_{i } \eta(p_i^\alpha,x)\, \eta( q_i^\alpha b, y)=\sum_{i }
\eta(x,p_i^\alpha)\, \eta( q_i^\alpha, b
y)
 =\eta(x,by).$$
Note that the expression $\eta(ab,c)$ with $a,b,c\in \rup$  is
invariant under cyclic permutations of  $ a,b,c $; indeed,
$\eta(ab,c)=\eta (a,bc)=\eta(bc,a)$.  Therefore the right-hand side
of (\ref{1.3.h}) evaluated on $x\otimes y$ gives
$$ \sum_{j } \eta(b p_j^{\beta \alpha},x) \,\eta( q_j^{\beta \alpha},
y)= \sum_{j } \eta( xb,p_j^{\beta \alpha}) \, \eta( q_j^{\beta
\alpha}, y)=\eta(xb,y)=\eta(x,by).$$ This proves (\ref{1.3.h}).
Formula (\ref{1.3.h}) implies that for any $a\in \rup$, $b\in
\rup_{\beta^{-1}}$,
\begin{equation}\label{1.3.i} \psi_\alpha(a) b=\sum_{i\in {J}_\alpha} p_i^\alpha a q_i^\alpha b= \sum_{j\in {J}_{\beta
\alpha}} b p_j^{\beta  \alpha} a  q_j^{\beta  \alpha}=b \psi_{\beta
\alpha} (a) \, .\end{equation} This implies
  (\ref{1.3.e}).  By the cyclic symmetry,
$$ \eta (\psi_\alpha (a), b)=\eta(\sum_i p_i^{\alpha} a
q_i^{\alpha},b)=
\eta(a, \sum_i  q_i^{\alpha} b  p_i^{\alpha})=\eta(a,
\psi_{\alpha^{-1}} (b)).$$ This proves (\ref{1.3.a}). Using (\ref{1.3.h}), we obtain
$$\psi_\alpha (a\psi_{\beta} (b))= \sum_{i\in {J}_\alpha } p_i^\alpha a \psi_{\beta} (b) q_i^\alpha
= \sum_{i\in {J}_\alpha}
p_i^\alpha
a q_i^\alpha  \psi_{\alpha\beta} (b)
 =\psi_{ \alpha}(a)
\,\psi_{\alpha\beta} (b).$$ This proves   (\ref{1.3.b}).
Substituting $a=1_{ \rup}$ in (\ref{1.3.b}), we obtain
(\ref{1.3.c}).

Formula (\ref{1.3.h}) implies   that for any $b\in
\rup_{\beta^{-1}}$,
$$\psi_{\alpha^{-1}} (b)=\sum_{i\in {J}_\alpha} q_i^\alpha b p_i^\alpha =
\sum_{j\in {J}_{\beta \alpha }} q_j^{\beta  \alpha} b p_j^{\beta
\alpha}=\psi_{(\beta \alpha)^{-1} } (b)=\psi_{\alpha^{-1}\beta^{-1}}
(b).$$ This   is equivalent to (\ref{1.3.d}).   We now check (\ref{1.3.f}):
$$\Tr\, (\mu_c\, \psi_{\beta}:\rup_{\alpha}\to
\rup_{\alpha})=\sum_{i\in {J}_\alpha} \eta( \mu_c \psi_\beta
(p^\alpha_i) , q^\alpha_i)=\sum_{i\in {J}_\alpha, j\in {J}_\beta}
\eta( cp_j^\beta
 p^\alpha_i q_j^\beta, q^\alpha_i)$$
$$= \sum_{i\in {J}_\alpha, j\in {J}_\beta} \eta( q^\alpha_i cp_j^\beta
 p^\alpha_i , q_j^\beta  )
 = \sum_{  j\in {J}_\beta} \eta( \psi_{\alpha^{-1}} \mu_c (p_j^\beta
), q_j^\beta  )
 =
\Tr\, ( \psi_{\alpha^{-1}} \mu_c:\rup_{\beta}\to \rup_{\beta}).  $$
 \end{proof}

\begin{lemma}\label{firstbuiang+}    Set
$L=\oplus_{\alpha\in \orlin}\,  L_\alpha$ where
$L_\alpha=\psi_1(\rup_\alpha)\subset \rup_\alpha$.  Then
  $L$ is a subalgebra of ${ \rup}$ and $\psi_\alpha (L)=L$ for
all $\alpha\in \orlin$. The triple $(L, \eta\vert_L, \{ \psi_\alpha
\vert_L\}_{\alpha\in \orlin})$ is a semisimple crossed $\orlin$-algebra.
\end{lemma}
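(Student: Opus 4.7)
The plan is to identify $L$ as the image (equivalently, fixed-point set) of the projector $\psi_1$ on $B$, then verify the algebra structure, the action of $G$, the four axioms of a crossed $G$-algebra, and finally the semisimplicity of $L_1$, invoking the identities of Lemma \ref{firstbuiang} throughout.

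Setting both indices to $1$ in (\ref{1.3.c}) gives $\psi_1^2=\psi_1$, so $\psi_1\colon B\to B$ is a grading-preserving projector whose image and fixed-point set both coincide with $L$. First I would show $L$ is a graded subalgebra containing $1_B$: for $a\in L_\alpha$ and $b\in L_\beta$, the identity $\psi_1(a\psi_1(b))=\psi_1(a)\psi_1(b)$, which is (\ref{1.3.b}) with both indices equal to $1$, gives $ab=\psi_1(a\psi_1(b))\in L_{\alpha\beta}$; and $\psi_1(1_B)=\sum_i p_i^1 q_i^1=1_B$ by (\ref{firstide}).

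Next I would verify the axioms of a crossed $G$-algebra. The identities $\psi_\gamma=\psi_1\psi_\gamma=\psi_\gamma\psi_1$ from (\ref{1.3.c}) show that $\psi_\gamma(B)\subset L$, that $\psi_\gamma(L_\alpha)\subset L_{\gamma\alpha\gamma^{-1}}$, and, using $\psi_\gamma\psi_{\gamma^{-1}}=\psi_1=\id_L$, that $\psi_\gamma|_L$ is a graded bijection; multiplicativity follows from (\ref{1.3.b}) with its second index set to $1$ (after substituting $b=\psi_1(b)$), and $\psi_{\gamma\delta}=\psi_\gamma\psi_\delta$ from (\ref{1.3.c}) makes $\gamma\mapsto\psi_\gamma|_L$ a group homomorphism into the algebra automorphisms of $L$. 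Axiom (1) is completed by setting the first index to $1$ in (\ref{1.3.d}), which gives $\psi_\alpha=\psi_1$ on $B_\alpha$ and hence $\psi_\alpha|_{L_\alpha}=\id$. Axiom (2) follows from (\ref{1.3.e}) applied to $a\in L_\alpha$ and $b\in L$: $\psi_\alpha(b)\,a=a\,\psi_{\alpha^{-1}\alpha}(b)=a\,\psi_1(b)=ab$. Axiom (3) is immediate from (\ref{1.3.a}) and $\psi_{\gamma^{-1}}\psi_\gamma=\psi_1$. For Axiom (4), I would combine (\ref{1.3.f}) with the observation that $\psi_\beta=\psi_1\psi_\beta$ forces the image of $\mu_c\psi_\beta$ to lie in $L_\alpha$, so in the decomposition $B_\alpha=L_\alpha\oplus\ker(\psi_1|_{B_\alpha})$ the block of $\mu_c\psi_\beta$ into $\ker\psi_1$ vanishes and $\Tr(\mu_c\psi_\beta|_{B_\alpha})=\Tr(\mu_c\psi_\beta|_{L_\alpha})$; the analogous argument applies to the right-hand side of (\ref{1.3.f}). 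Non-degeneracy of $\eta|_{L_\alpha\otimes L_{\alpha^{-1}}}$ is similar: if $\eta(a,L_{\alpha^{-1}})=0$ for $a\in L_\alpha$, then (\ref{1.3.a}) gives $\eta(a,b)=\eta(a,\psi_1(b))=0$ for all $b\in B_{\alpha^{-1}}$, forcing $a=0$.

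The step I expect to be the main obstacle is semisimplicity of $L_1$. Axiom (2) implies $L_1$ lies in the center of $L$, so $L_1$ is a commutative finite-dimensional $K$-algebra. I would show it has no nonzero nilpotents: if $x\in L_1$ satisfies $x^n=0$, then for every $y\in L_1$ the product $xy\in L_1\subset B_1$ is nilpotent by commutativity, so $\mu_{xy}$ is a nilpotent endomorphism of each $B_\beta$ and has vanishing trace. By (\ref{innn}), $\eta(x,y)=\eta(xy,1_B)=\Tr(\mu_{xy}|_{B_\beta})=0$ for all $y\in L_1$, which forces $x=0$ by the non-degeneracy of $\eta|_{L_1}$. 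Since any finite-dimensional commutative reduced algebra over an algebraically closed field is a direct sum of copies of $K$, this gives the semisimplicity required by Section \ref{sssns}.
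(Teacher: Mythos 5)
Your proof is correct and, for almost every step, follows the same route as the paper: both you and the paper verify that $L$ is a unital graded subalgebra via Lemma~\ref{firstbuiang} with indices set to $1$, derive the automorphism property and Axioms (1)--(3) of a crossed $G$-algebra from the appropriate specializations of~(\ref{1.3.a})--(\ref{1.3.e}), verify Axiom~(4) by observing that $\mu_c\psi_\beta$ has image inside $L$ so that its trace on $B_\alpha$ equals its trace on $L_\alpha$, and obtain non-degeneracy of $\eta\vert_L$ from the $\eta$-self-adjointness of the projector $\psi_1$ (equivalently, from~(\ref{1.3.a}) directly as you phrase it).

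Where you genuinely depart from the paper is the semisimplicity of $L_1$. You argue directly: $L_1$ is commutative by Axiom~(2), and if $x\in L_1$ is nilpotent then $\mu_{xy}$ is nilpotent on each $B_\beta$ for every $y\in L_1$, so $\eta(x,y)=\Tr(\mu_{xy}\vert_{B_\beta})=0$, forcing $x=0$ by non-degeneracy of $\eta\vert_{L_1}$; a reduced finite-dimensional commutative $K$-algebra over the algebraically closed $K$ is then $K^N$. This is correct and somewhat slicker than the paper's route, which instead uses the biangular form to show that the radical of the (not-necessarily-commutative) algebra $B_1$ vanishes, invokes Wedderburn to write $B_1\cong\bigoplus_i\Mat_{n_i}(K)$, and then computes $\psi_1$ explicitly on each matrix factor as $a\mapsto n_i^{-1}\Tr(a)E_{n_i}$, identifying $L_1$ with the center of $B_1$. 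The trade-off: your argument is shorter and more self-contained for this lemma, but the paper's explicit identification of $L_1$ with the center of $B_1\cong\bigoplus_i\Mat_{n_i}(K)$ (and of the basic idempotents $i\in\Bas(L)$ with the unit matrices $E_{n_i}$) is exactly what is used in Section~\ref{Tcoatb---+}, so in context the paper's more detailed computation earns its keep.
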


 \begin{proof} Formula (\ref{1.3.b}) with  $ \alpha=\beta=1$ shows that $L=\Im\,
\psi_1$ is a subalgebra of ${ \rup}$ with unit $1_{ \rup}=\psi_1(1_{
\rup}) $.  Formula (\ref{1.3.c})   implies that $\psi_1^2=\psi_1$.
By (\ref{1.3.a}), the projector $\psi_1:B\to B$ is self-adjoint with
respect to~$\eta=\eta_B$. Hence ${ \rup}$ splits as an orthogonal sum of
$L$ and $\Ker\, \psi_1$. Therefore   $\eta \vert_L$ is an inner
product on $L$.

  Formula (\ref{1.3.c}) with $\alpha=1$
shows that $\psi_\beta (\rup)\subset L$  for all $\beta \in \orlin$.
Set $\varphi_\beta=\psi_\beta \vert_L:L\to L$. Formula (\ref{1.3.c})
with $\alpha=\beta^{-1}$  implies that $\varphi_\beta$ and
$\varphi_{\beta^{-1}}$ are mutually inverse automorphisms of $L$.
That they preserve multiplication  follows from
  (\ref{1.3.b}) with $\beta=1$. By (\ref{1.3.c}) and (\ref{1.3.a}),   $\varphi_{\alpha\beta} = \varphi_\alpha  \,
\varphi_\beta  $ and   $\varphi_\alpha$ preserves $\eta\vert_L$.
Formula (\ref{1.3.d}) with $\alpha=1$ yields $\varphi_\beta
\vert_{L_\beta}=\id$. Formula (\ref{1.3.e}) implies that
$\varphi_\alpha(b) a=ab$ for   $a\in L_\alpha$, $b\in L$. Finally,
for any $\alpha, \beta \in \orlin$ and $c\in L_{\alpha\beta
\alpha^{-1} \beta^{-1}}$,
$$\Tr\, (\mu_c\, \varphi_{\beta}:L_{\alpha}\to L_{\alpha})=\Tr\,
(\mu_c\,
\psi_{\beta}:L_{\alpha}\to
L_{\alpha})=\Tr\, (\mu_c\, \psi_{\beta}:\rup_{\alpha}\to
\rup_{\alpha})$$
$$=\Tr\, ( \psi_{\alpha^{-1}} \mu_c:\rup_{\beta}\to \rup_{\beta})=\Tr\,
( \psi_{\alpha^{-1}} \mu_c:L_{\beta}\to L_{\beta})= \Tr\, (
\varphi_{\alpha^{-1}} \mu_c:L_{\beta}\to L_{\beta})\, ,$$ where the
second and fourth equalities follow from the inclusion $\psi_\beta
(\rup)\subset L$ and the third equality is (\ref{1.3.f}). Thus $L$
satisfies all axioms of  a crossed  $\orlin$-algebra.

The radical of a finite-dimensional $K$-algebra
$C$ can be defined as the annihilator of the symmetric bilinear form
 $C\times C\to K$ carrying a pair $(c_1,c_2)\in C\times C$ to the trace
  of the homomorphism $C\to C$, $ c\mapsto c_1c_2c$. By the definition of a
   biangular $G$-algebra, the radical of  $B_1 $ is trivial.
Hence  $\rup_1$ is a direct sum of matrix rings
over $K$. If $B_1$ is  a matrix ring,  $\rup_1=\Mat_n(K)$ with
$n\geq 1$, then $\eta=\eta_B$ carries  a pair  $(a_1, a_2)\in \rup_1\times B_1 $   to $n\,
\Tr(a_1a_2)$, where $\Tr$ is the standard matrix trace on $\Mat_n(K)$.
 The  vector $b_1\in \rup_1\otimes \rup_1$ determined by  $\eta \vert_{B_1}$    is equal to $n^{-1}
\sum_{i,j=1}^n e_{i,j} \otimes e_{j,i}$ where $e_{i,j}$ is the
elementary $(n\times n)$-matrix whose $(i,j)$-term is $1$ and all
other terms are zero. The homomorphism $\psi_1:\rup_1\to \rup_1$
carries any $a\in \rup_1$ to $n^{-1} \sum_{i,j=1}^n e_{i,j} a
e_{j,i}=n^{-1} \Tr(a) E_n$ where $E_n$ is the unit $(n\times
n)$-matrix. Thus, $\psi_1$ is    a projection of $\Mat_n(K)$ onto
its 1-dimensional center. If $\rup_1$ is a direct sum of $N$ matrix
rings then $\psi_1$ is a projection of $\rup_1$ onto its
 center $K^N$. Hence $L_1=\psi_1(\rup_1)=K^N$. Thus   $L $ is semisimple.
  \end{proof}

The crossed   $\orlin$-algebra  $L \subset B$ is called
 the {\it $G$-center}  of $\rup$.

 \begin{lemma}\label{2dimunderalgg}    The  crossed
 $\orlin$-algebra $(L'=\oplus_{\alpha\in G}\, L'_\alpha, \eta', \{\varphi'_\alpha\}_\alpha)$ underlying the HQFT $(A^\circ_B, \tau^\circ_B)$ is
 isomorphic to the $G$-center $L$ of $B$.
\end{lemma}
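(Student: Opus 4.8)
The plan is to identify $L'$ with the $G$-center $L$ via the natural equalities $A^\circ_M=\psi_1(B_\alpha)=L_\alpha$ for $M$ a connected $X$-curve representing $\alpha\in G$, and then to verify, by evaluating a handful of state sums, that the multiplication, inner product and $G$-action that $(A^\circ_B,\tau^\circ_B)$ induces on $L'$ coincide with those of $L$. First I would compute the projector $P_M$ from the end of Section \ref{statesu++}. Recall that $M$ carries the canonical trivialization $t$ with a single vertex and a single edge, that $A_{(M,t)}=B_\alpha$, and that $A^\circ_M$ is the image of $P_M=\tau_B(M\times[0,1]):B_\alpha\to B_\alpha$, the value of the pseudo-HQFT on the cylinder over $M$ (with the trivial loop in the $[0,1]$-direction and the trivialization $t$ at both ends). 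I would evaluate this by the state sum of Section \ref{statesu} on the CW-decomposition of $M\times[0,1]$ with two vertices, three edges (the two boundary circles and one edge joining them) and one square $2$-cell. The only interior edge is the joining edge; it carries the value $1\in G$ and contributes the vector $b_1=\sum_i p_i^1\otimes q_i^1\in B_1\otimes B_1$. Contracting $b_1$ against the face form $x_1\otimes x_2\otimes x_3\otimes x_4\mapsto\eta_B(x_1x_2x_3x_4,1_B)$ of the square, and using $\eta_B(a,b)=\eta_B(ab,1_B)$ together with the self-adjointness of $\psi_1$ (formula (\ref{1.3.a}) with $\alpha=1$), one gets $\eta_B(P_M(x),y)=\eta_B(\psi_1(x),y)$ for all $x,y$, hence $P_M=\psi_1\vert_{B_\alpha}$ and $A^\circ_M=\psi_1(B_\alpha)=L_\alpha$. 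By Axiom~(4) this determines $A^\circ_N$ for every $X$-curve $N$, so $L'=L$ as $G$-graded vector spaces.

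Next I would compute the structure maps. By the commutative diagram (\ref{CDDD}), $\tau^\circ_B(W)$ is obtained from $\tau_B(W)$ by precomposing with the inclusions $L_\bullet=A^\circ_\bullet\hookrightarrow B_\bullet$ and postcomposing with the projections $\psi_1$ onto the $L_\bullet$. I would apply this to the surfaces that enter the definition of the crossed $G$-algebra structure in Section \ref{algebral} and Lemma \ref{oplik} — the disk with two holes $D_{--+}(\alpha,\beta;1,1)$ (multiplication), the annuli $C_{--}(\alpha;1)$ (the form $\eta_\alpha$) and $C_{-+}(\alpha;\beta^{-1})$ (the automorphism $\varphi_\beta$), and the disk $B_+$ (the unit) — evaluating $\tau_B$ on each by the state sum on a minimal CW-decomposition that restricts to the canonical one-edge trivialization on every boundary circle. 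In each case the interior edges carry $G$-values read off from the map to $X$, the associated contractions $\sum_i p_i^\gamma(\,\cdot\,)q_i^\gamma$ are exactly the operators $\psi_\gamma$ by definition, and the face forms supply the multiplication of $B$; the bookkeeping is the same as in the proof of Lemma \ref{lele1}, with (\ref{firstide}), (\ref{secide}) and Lemma \ref{firstbuiang} used to reorganize the sums. The outcome is $\tau_B(B_+)(1_K)=\psi_1(1_B)=1_B$; $\tau_B(C_{-+}(\alpha;\beta^{-1}))$ carries $a\in B_\alpha$ to $\psi_\beta(a)\in B_{\beta\alpha\beta^{-1}}$; $\tau_B(C_{--}(\alpha;1))$ carries $a\otimes b$ to $\eta_B(\psi_1(a),\psi_1(b))$; and $\tau_B(D_{--+}(\alpha,\beta;1,1))$ carries $a\otimes b$ to an element of $B_{\alpha\beta}$ that equals the product $ab$ of $B$ whenever $a\in L_\alpha$ and $b\in L_\beta$.

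It then remains to compare with the $G$-center. By Lemma \ref{firstbuiang+}, $L=\bigoplus_\alpha\psi_1(B_\alpha)$ is a subalgebra of $B$ on which $\psi_1$ acts as the identity (it is the orthogonal projector of $B$ onto $L$), $\psi_\gamma(B)\subseteq L$ and $\psi_1\psi_\gamma=\psi_\gamma$ for all $\gamma$ (by (\ref{1.3.c}) with $\alpha=1$), and the crossed $G$-algebra structure of $L$ has multiplication and unit inherited from $B$, inner product $\eta_B\vert_L$, and $G$-action $\varphi_\gamma=\psi_\gamma\vert_L$. Feeding the formulas of the previous paragraph through diagram (\ref{CDDD}): for $a\in L_\alpha$, $b\in L_\beta$ the $L'$-product is $\psi_1(\tau_B(D_{--+}(\alpha,\beta;1,1))(a\otimes b))=\psi_1(ab)=ab$; for $a\in L_\alpha$, $b\in L_{\alpha^{-1}}$ the $L'$-form is $\tau_B(C_{--}(\alpha;1))(a\otimes b)=\eta_B(a,b)$; the $L'$-action of $\beta$ on $a\in L_\alpha$ is $\psi_1(\tau_B(C_{-+}(\alpha;\beta^{-1}))(a))=\psi_1(\psi_\beta(a))=\psi_\beta(a)$; and the unit of $L'$ is $\psi_1(\tau_B(B_+)(1_K))=1_B$. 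Hence the identity map is an isomorphism $L'\to L$ of crossed $G$-algebras. The one genuinely delicate point is the bookkeeping in the state sums of the second paragraph: keeping track of the induced orientations of sides and flags, of the conjugating $G$-elements carried by the proper arcs of the chosen CW-decompositions, and of the identification of $B_{\gamma^{-1}}$ with the dual of $B_\gamma$ via $\eta_B$. Once the CW-decompositions are fixed, each computation collapses to one of the identities of Lemma \ref{firstbuiang}.
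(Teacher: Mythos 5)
Your argument follows the same route as the paper: compute $P_M$ via the state sum on the two-vertex, three-edge, one-face cylinder to identify $P_M=\psi_1$ and hence $L'_\alpha=L_\alpha$, then read off $\varphi'_\beta=\psi_\beta\vert_L$ and $\eta'=\eta_B\vert_{L\otimes L}$ from the same cylinder with relabelled $G$-system, pushing everything through diagram (\ref{CDDD}). This is correct; in fact you are slightly more explicit than the paper, which leaves the multiplication check on $D_{--+}(\alpha,\beta;1,1)$ implicit while you carry it out and close it with $\psi_1(ab)=ab$ for $a,b\in L$.
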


 \begin{proof}   To compute $L'_\alpha$, we represent $\alpha\in G$ by a loop
 $ g: S^1 \to X =K(G,1)$, where
 $S^1=\{z\in \CC\, \vert\, \vert z\vert=1\}$ with clockwise
 orientation and base point $s=1$. The $X$-curve $M=(S^1, g)$ has a
 canonical trivialization $t$ formed by  one vertex   $s $,
 one edge~$e$, and the $G$-system assigning $\alpha$ to the edge $e$   oriented clockwise. By Section \ref{statesu++},
 $L'_\alpha=A^\circ_M$  is the image of the   projector
 $P_M:A_{(M,t)}\to A_{(M,t)}$, where $A_{(M,t)}=B_\alpha$. Recall that $P_M=\tau(W,g_1:W\to X)$, where
 $W=S^1\times [0,1]$ is viewed as a
cobordism between two copies of~$(M,t)$ and $g_1$ is the composition
of the projection $W\to
 S^1$ with $g:S^1\to X$.   Let $T$ be the CW-decomposition of $W$
 formed by
 the vertices $(s,0)$, $(s, 1)$, the edges $e_0=e\times \{0\}$,
 $e_1=e\times \{1\}$,  $e_2=\{s\}\times [0,1]$, and the face $(S^1-\{s\})\times (0,1)$.
 We orient $e_0, e_1$   clockwise and $e_2$ from $(s,0)  $ to $(s,1)$. The map $g_1$ is presented  by the
 $G$-system on $T$ assigning $\alpha$ to $e_0, e_1$   and $1\in G$
 to $e_2$. It is clear from the definitions that
 $P_M=\tau(W,g_1) :B_\alpha\to B_\alpha$ carries any $a\in B_\alpha$
  to $\sum_{i } p^1_i a q^1_i$, where $\sum_i p^1_i \otimes q^1_i=b_1\in B_1\otimes B_1$. Hence $P_M=\psi_1$ and $L'_\alpha=\Im\,
 \psi_1=L_\alpha$.

 Replacing in this construction the labels of
   $e_1, e_2$  by  $\beta\alpha \beta^{-1},   \beta^{-1}\in G$, respectively,  we obtain a  $G$-system on $T$
 representing a map $g_\beta:W\to
 X$. The $X$-cobordism $(W, g_\beta)$ is   the
 annulus
  $C_{-+}(\alpha; \beta^{-1})$ used to
  define $\varphi'_\beta$ on $L'_\alpha=L_\alpha$ in Section \ref{algebral}.  By definition,
   $$\varphi'_\beta \vert_{L_\alpha}=\tau^\circ  (W, g_\beta) =\tau   (W, g_\beta)\vert_{L_\alpha}:L_\alpha\to
   L_{\beta\alpha\beta^{-1}}\, .$$   The homomorphism
   $\tau   (W, g_\beta):B_\alpha\to B_{\beta \alpha \beta^{-1}}$
   carries any $a\in B_\alpha$ to
 $ \sum_{i } p^\beta_i a q^\beta_i=\psi_\beta (a)
 $, where   $\sum_i p^\beta_i \otimes q^\beta_i=b_\beta\in B_\beta\otimes B_{\beta^{-1}}$.
 Hence $\varphi'_\beta =\psi_\beta\vert_{L }  $.

Inverting the orientation of   $S^1\times \{1\}\subset \partial W$,
we can view   $(W, g_1)$     as an $X$-cobordism between $(M, t)
\amalg (-M,t)$ and $\emptyset$. In the notation of Section
\ref{algebral}, this   cobordism  is $C_{--}(\alpha; 1)$. By
definition, $\eta'_\alpha=\tau^\circ (W, g_1):L_\alpha\otimes
L_{\alpha^{-1}}\to K$ is the restriction of   $\tau (W,
g_1):B_\alpha\otimes B_{\alpha^{-1}}\to K$. For   $a\in B_\alpha$,
$b\in B_{\alpha^{-1}}$,
 $$\tau
(W, g_1)(a\otimes b)=\sum_i \eta_B(ap^1_ibq^1_i,
1_B)=\eta_B(a, \psi_1(b))\, .$$ Since $\psi_1\vert_L=\id$, we    conclude that
 $\eta'=\eta_B \vert_{L\otimes L}$.
  \end{proof}

  By the proof of Lemma
\ref{firstbuiang+}, the $K$-algebra $\rup_1$ splits as   a direct sum of
matrix algebras over $K$ and     $\psi_1: B_1\to   B_1$ maps each of these matrix
algebras onto its center. The  set $I=\Bas(L)$ of the basic idempotents of~$L_1=\psi_1(B_1)$ is  
the set of unit elements of these matrix algebras. More
precisely, each basic idempotent $i\in I$ is the unit element of a
direct summand $\Mat_{n_i} (K)$ of $B_1$, where $n_i\geq 1$.  Let $I_0\subset I$ be the fixed point set
   of the action of $G$ on $I$. For  $i\in I_0$, the  subalgebra $iL$ of $L$ is a crossed $G$-algebra with
   unique basic idempotent  $i$. The pair $(iL,i)$ determines a cohomology class $\nabla_i\in H^2(G;K^*)$ by
   Lemma \ref{lelecrocro2}.

  \begin{theor}\label{firstbuiang+opik}    For any  closed connected oriented surface $W$ endowed with a
  map $\widetilde g:W\to X=K(G,1)$ such that   $g=\widetilde g_\#: \pi_1(W)\to G$ is an epimorphism,
\begin{equation}\label{bianan}\tau_B(W,\widetilde g)=\sum_{i\in I_0} \, n_i^{\chi(W)} \,\,
g^*(\nabla_i) ([-W]))\, .\end{equation}
 \end{theor}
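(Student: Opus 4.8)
The plan is to pass to the crossed $G$-algebra underlying the HQFT $(A^\circ_B,\tau^\circ_B)$ and exploit its semisimplicity. Since $W$ is closed, $\tau_B(W,\widetilde g)=\tau^\circ_B(W,\widetilde g)$, and by Lemma~\ref{2dimunderalgg} the crossed $G$-algebra underlying $(A^\circ_B,\tau^\circ_B)$ is the $G$-center $L$ of $B$, which is semisimple by Lemma~\ref{firstbuiang+}. Writing $I=\Bas(L)$, I would decompose $L=\bigoplus_\kappa \kappa L$ over the orbits $\kappa$ of the $G$-action on $I$, each $\kappa L$ being a simple crossed $G$-algebra (Section~\ref{sssns}). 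By Lemma~\ref{splitd} this splits the HQFT as $(A^\circ_B,\tau^\circ_B)\cong\bigoplus_\kappa(A^\kappa,\tau^\kappa)$ with $(A^\kappa,\tau^\kappa)$ having underlying crossed $G$-algebra $\kappa L$; since the invariant of a closed $X$-surface is additive over direct sums of HQFTs (apply the definition of the direct sum to $W$ viewed as a cobordism $\emptyset\to\emptyset$), $\tau_B(W,\widetilde g)=\sum_\kappa\tau^\kappa(W,\widetilde g)$. It then remains to evaluate each summand.

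First I would show that the orbits $\kappa$ that are not fixed points contribute $0$. For such a $\kappa$ the stabilizer $H_\kappa\subsetneq G$ is a proper subgroup of finite index, and by Lemma~\ref{lelecrocro2} the pair $(\kappa L,\,\cdot\,)$ comes from a triple $(H_\kappa,\theta_\kappa,k_\kappa)\in\mathcal B(G)$, so $\kappa L$ is isomorphic to the transfer $L^{G,H_\kappa,\theta_\kappa,k_\kappa}$ of a crossed $H_\kappa$-algebra. By Lemma~\ref{trac} and Lemma~\ref{liftii}, $(A^\kappa,\tau^\kappa)$ is then isomorphic to an HQFT of the form $(A^{G,H_\kappa,a},\tau^{G,H_\kappa,a,k_\kappa})$, obtained by transfer and rescaling from the finite covering $p\colon\widetilde X\to X$ corresponding to $H_\kappa\subset G$. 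For the closed $X$-surface $(W,\widetilde g)$ the transfer construction of Section~\ref{ee3.3+} expresses $\tau^\kappa(W,\widetilde g)$ as a sum over the lifts of $\widetilde g$ to $\widetilde X$; a lift exists only if $\Ima(\widetilde g_\#)$ lies in a conjugate of $H_\kappa$, and since $\widetilde g_\#=g$ is onto $G$ while $H_\kappa$ is proper, there are no lifts. Hence $\tau^\kappa(W,\widetilde g)=0$, and this is where surjectivity of $g$ is genuinely used.

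Next I would treat a fixed point $i\in I_0$. Then $\{i\}$ has stabilizer $G$, so $iL$ is a simple crossed $G$-algebra with unique basic idempotent $i$, corresponding by Lemma~\ref{lelecrocro2} to the triple $(G,\nabla_i,\eta(i,i))\in\mathcal B(G)$; as the transfer over $H=G$ is trivial, $iL\cong L^{\nabla_i,\eta(i,i)}$, the $\eta(i,i)$-rescaling of $L^{\nabla_i}$. Using Condition~(3) of an inner product together with $i^2=i$, the identity $\eta_B(c,1_B)=\Tr(\mu_c\colon B_1\to B_1)$, and the fact that $i$ is the unit of the matrix summand $\Mat_{n_i}(K)$ of $B_1$, one computes $\eta(i,i)=\eta_B(i,1_B)=\dim\Mat_{n_i}(K)=n_i^2$, so $iL\cong L^{\nabla_i,n_i^2}$. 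By Lemma~\ref{split+resc} the HQFT $(A^i,\tau^i)$ is the $n_i^2$-rescaling of the HQFT with underlying algebra $L^{\nabla_i}$, which by Theorem~\ref{liftii+98} and Lemma~\ref{liftii} is $(A^{-\nabla_i},\tau^{-\nabla_i})$. Since $W$ is closed, Section~\ref{ee3.3} gives $\tau^{-\nabla_i}(W,\widetilde g)=\widetilde g^*(-\nabla_i)([W])=g^*(\nabla_i)([-W])$, using $\langle -c,z\rangle=\langle c,-z\rangle$ for the $K^*$-valued pairing; and the $n_i^2$-rescaling multiplies this by $(n_i^2)^{\chi(W)/2}=n_i^{\chi(W)}$ because $b_0(\emptyset)=0$. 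Summing $\tau^i(W,\widetilde g)=n_i^{\chi(W)}\,g^*(\nabla_i)([-W])$ over $i\in I_0$ and adding the vanishing contributions of the non-fixed orbits yields~(\ref{bianan}).

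I expect the main difficulty to be bookkeeping rather than a single hard idea: one must run the pair $(iL,i)$ through the classification of Lemma~\ref{lelecrocro2} so as to identify both the class $\nabla_i$ and the rescaling constant $\eta(i,i)$, verify $\eta(i,i)=n_i^2$, and then carry the sign conventions for $K^*$-valued cohomology consistently through Lemma~\ref{trac} and Theorem~\ref{liftii+98} so that $L^{\nabla_i}$ is matched with $\tau^{-\nabla_i}$ and the evaluation lands on $[-W]$ with the exponent $\chi(W)$ emerging from the $k$-rescaling exactly as $n_i^{\chi(W)}$. The only structurally essential step is the vanishing of the non-fixed-orbit terms via the non-existence of lifts to covers associated with proper subgroups.
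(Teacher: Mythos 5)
Your proposal is correct and follows essentially the same route as the paper's own proof: decompose the semisimple $G$-center over $G$-orbits of basic idempotents via Lemmas \ref{splitd}, \ref{lelecrocro2}, \ref{trac}, and \ref{liftii}; kill the non-fixed-orbit contributions because a surjective $g$ admits no lift to the covering associated with a proper subgroup; and evaluate each fixed-point contribution by identifying $iL$ with the $n_i^2$-rescaling of $L^{\nabla_i}$ and invoking Theorem \ref{liftii+98} together with the rescaling formula. Your verification of $\eta(i,i)=n_i^2$ via $\eta_B(i,1_B)=\Tr(\mu_i\vert_{B_1})$ fills in a detail the paper leaves implicit, but the decomposition, the vanishing argument, and the final computation all match the paper's.
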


 \begin{proof}  By the previous lemma, $L$ is the underlying crossed $G$-algebra of the HQFT $(A^\circ_B, \tau^\circ_B)$.  
As we know, $L$ splits as a direct sum of simple crossed $G$-algebras $L^{\kappa}$ numerated by the orbits $\kappa$ of the action of $G$ on $I=\Bas(L)$.
  By Lemma \ref{splitd}, the HQFT $(A^\circ_B, \tau^\circ_B)$ splits as a direct sum of HQFTs
 $(A^{\kappa}, \tau^{\kappa})$ such that $L^{\kappa}$ is the underlying crossed $G$-algebra of $(A^{\kappa}, \tau^{\kappa})$ for all orbits $\kappa\subset I$.
 Then $\tau_B(W,\widetilde g)=\tau^\circ_B(W,\widetilde g)=\sum_\kappa \tau^{\kappa}(W,\widetilde g)$.

We claim that  $\tau^{\kappa}(W,\widetilde g)=0$ for any   orbit $\kappa\subset I $ with
at least two  elements. Indeed, the proof of Lemma \ref{lelecrocro2}
shows that
 $L^{\kappa}$ is isomorphic to  $L^{G, H, \theta,k}$, where $H$ is a subgroup  of $G$ of index  $ \vert m\vert \geq
 2$,
  $\theta\in H^2(H;K^*)$, and $k\in K^*$. By Lemma \ref{trac}, $L^{G, H, \theta,k}$
   underlies the   $X$-HQFT   $(A^{G,H,-\theta}, \tau^{G,H,-\theta, k})$.
 By Lemma \ref{liftii}, this $X$-HQFT is  isomorphic to $(A^{\kappa}, \tau^{\kappa})$.
 Thus,
 $(A^{\kappa}, \tau^{\kappa})$ is isomorphic to the transfer of a $K(H,1)$-HQFT.
The assumption
  $g( \pi_1(W))= G$ implies that the map $\widetilde g:W\to X$ does not lift to non-trivial coverings of
  $X$. Hence, $\tau^{\kappa}(W,\widetilde g)=0$.

A one-element orbit  $\kappa\subset I$ is  just  $\{i\}$ for  $i\in I_0$.
By Lemma \ref{lelecrocro2},  $L^{\kappa}=iL$  is
  obtained from $L^{\nabla_i}$ by $k_i$-rescaling, where $k_i=\eta_B(i,i) =n_i^2$.
Hence $L^{\kappa}$     underlies    the $X$-HQFT obtained
  from
   $(A^{-\nabla_i}, \tau^{-\nabla_i)}$ by $k_i$-rescaling.
    By Lemma \ref{liftii}, the latter HQFT is   isomorphic to   $(A^{\kappa}, \tau^{\kappa})$.
    We have therefore $$\tau^{\kappa} (W,\widetilde g)=\tau^{\{i\}} (W,\widetilde g)=k_i^{\chi(W)/2} \,
g^*(-\nabla_i)([W])=n_i^{\chi(W)} \,
g^*(\nabla_i)([-W])\, .$$ Hence
$$\tau_B(W, \widetilde g)
=\sum_{i\in I_0}\, \tau^{\{i\}} (W,\widetilde g)=
\sum_{i\in I_0} \, n_i^{\chi(W)} \,\,
g^*(\nabla_i) ([-W]))\, .$$
  \end{proof}

Note that for $W=S^2$, we  have $G=\{1\}$  and  $\nabla_i=0$ for all $i\in I_0=I$. Formula (\ref{bianan}) gives in this case $\tau_B(W, \widetilde g)=\sum_{i\in I} n_i^2=\dim B_1$.

We now  establish an analogue of Formula (\ref{bianan}) for a compact connected oriented surface $W$ of positive genus whose
  boundary  has $m\geq 1$ components.
We  use the notation introduced in the first two paragraphs of Section \ref{twolemmas}  assuming that 
the given homomorphism $g:\pi=\pi_1(W,w)\to G$
is surjective and that its restriction to the free group $H=\langle x_1,...,x_m\rangle \subset \pi$ is injective.  
As above, 
 $L$ is  the $G$-center of $B$  and  $I_0$ is  the fixed point set
   of the action of $G$ on $ \Bas(L)$. Suppose that for each $k=1,..., m$, we are given  a vector 
   $\overline \gamma_k \in  L_{g(x_k)}=A^\circ_{(C_k, \widetilde g)}$ where $A^\circ=A^\circ_B$ and $C_k$ is the $k$-th component of $\partial W$. Let $I_0^\gamma \subset I_0$ consist  of
 all $i\in I_0$ such that $i \overline \gamma_k\neq 0$ for all $k=1,...,m$
(note that $i \overline \gamma_k$ is the projection of  $ \overline \gamma_k$ to the direct summand $iL_{g(x_k)}$ of $L_{g(x_k)}$). For each $i\in I_0^\gamma$, we   define
 a cohomology class $\nabla^\gamma_i\in H^2(G, g(H);K^*)$ as $\nabla_i$ in
 Lemma \ref{lelecrocro2}  choosing the generating vectors $s_\alpha\in iL_\alpha\cong K$    as follows.
 Set $s_{g(x_k)}=i\overline \gamma_k$ for all $k$. The properties of the  crossed $G$-algebra $iL$  imply that
there is a unique vector   $s_{g(x_k)^{-1}} \in iL_{g(x_k)^{-1}}$ whose left and right products
  with $s_{g(x_k)}$  are equal to $i\in iL_1$.   Each  $\alpha\in g(H) $ expands as a product of the generators 
  $g(x_k)^{\pm 1}\in g(H)$, and we let  $s_\alpha$  be the product of the corresponding vectors $s_{g(x_k)^{\pm 1}}$ 
  (note that $s_\alpha\neq 0$).  For $\alpha\in G-g(H)$, the generating vector  $s_\alpha\in iL_\alpha $ is chosen in an arbitrary way.
The equality $s_\alpha s_\beta=\nabla_{\alpha, \beta} s_{\alpha \beta}$ for $\alpha, \beta \in G$ defines a $K^*$-valued 2-cocycle $\nabla=\{\nabla_{\alpha, \beta}\}_{\alpha, \beta\in G}$  whose restriction to $g(H)$ 
is trivial. This cocycle represents  $\nabla^\gamma_i\in H^2(G, g(H);K^*)$.

 \begin{theor}\label{lik} Set  $\overline\gamma=\otimes_{k=1}^m \overline \gamma_k \in A^\circ_{(\partial W, \widetilde g)}$. Then
  \begin{equation}\label{binan++}\tau^\circ_B(-W, \widetilde g) (\overline\gamma)
=\sum_{i\in I_0^\gamma} \, n_i^{\chi(W)+m}  \,\,
g^*(\nabla^\gamma_i) ([W,\partial W]))\,  ,\end{equation}
where we view $[W,\partial W]$ as an element of $H_2(\pi, H;\ZZ)$, cf.\  Lemma \ref{firstkey1++}. 
\end{theor}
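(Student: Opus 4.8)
The plan is to follow the scheme of the proof of Theorem \ref{firstbuiang+opik} while keeping track of the boundary vector $\overline\gamma$. By Lemma \ref{2dimunderalgg} the $G$-center $L$ of $B$ is the crossed $G$-algebra underlying $(A^\circ_B,\tau^\circ_B)$, and $L$ decomposes as a direct sum of simple crossed $G$-algebras $L^\kappa$ indexed by the orbits $\kappa$ of the action of $G$ on $I=\Bas(L)$; by Lemma \ref{splitd} the HQFT splits accordingly, $(A^\circ_B,\tau^\circ_B)=\oplus_\kappa(A^\kappa,\tau^\kappa)$. Since $-W$ is a cobordism from $\partial W$ to $\emptyset$, the direct-sum structure of HQFTs gives $\tau^\circ_B(-W,\widetilde g)(\overline\gamma)=\sum_\kappa\tau^\kappa(-W,\widetilde g)\bigl(\otimes_{k}p^\kappa_{C_k}\overline\gamma_k\bigr)$, where $p^\kappa_{C_k}\colon L_{g(x_k)}\to L^\kappa_{g(x_k)}$ is the canonical projection: indeed $A^\kappa_{\partial W}=\otimes_k A^\kappa_{C_k}$ and the projection $(A^\circ_B)_{\partial W}\to A^\kappa_{\partial W}$ is $\otimes_k p^\kappa_{C_k}$, so the ``off-diagonal'' parts of $\otimes_k\overline\gamma_k$ are killed. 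For a singleton orbit $\kappa=\{i\}$ with $i\in I_0$ the projection $p^{\{i\}}_{C_k}$ is multiplication by the idempotent $i$, so the corresponding input is $\otimes_k i\overline\gamma_k$, which vanishes unless $i\overline\gamma_k\neq0$ for all $k$, i.e.\ unless $i\in I_0^\gamma$.

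Next I would show that every orbit $\kappa$ with $|\kappa|\ge2$ contributes $0$, exactly as in the proof of Theorem \ref{firstbuiang+opik}: by Lemma \ref{lelecrocro2} the simple crossed $G$-algebra $L^\kappa$ is isomorphic to $L^{G,H',\theta,k}$ for a proper subgroup $H'\subsetneq G$ of finite index, so by Lemmas \ref{trac} and \ref{liftii} the HQFT $(A^\kappa,\tau^\kappa)$ is isomorphic to the transfer of a $K(H',1)$-HQFT; since $g=\widetilde g_\#$ is surjective, $\widetilde g\colon W\to X$ has no lift to the covering of $X$ determined by $H'$, hence $\tau^\kappa(-W,\widetilde g)=0$ by the definition of transfer, whatever vector it is applied to. Thus only singleton orbits with $i\in I_0^\gamma$ survive, and it remains to prove that for such $i$,
\[
\tau^{\{i\}}(-W,\widetilde g)\bigl(\textstyle\bigotimes_{k}i\overline\gamma_k\bigr)=n_i^{\chi(W)+m}\,g^*(\nabla^\gamma_i)([W,\partial W]).
\]

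For this I would, as in the closed case, use that $(iL,i)$ corresponds under Lemma \ref{lelecrocro2} to the triple $(G,\nabla_i,\eta_B(i,i))$ with $\eta_B(i,i)=n_i^2$, so $iL$ is the $n_i^2$-rescaling of $L^{\nabla_i}$; by Theorem \ref{liftii+98}, Lemma \ref{split+resc} and Lemma \ref{liftii}, $(A^{\{i\}},\tau^{\{i\}})$ is isomorphic to the $n_i^2$-rescaling of the cohomological HQFT $(A^{-\nabla_i},\tau^{-\nabla_i})$ of Section \ref{ee3.3}. Viewing $-W$ as a cobordism from $\partial W$ to $\emptyset$ and using $\chi(-W)=\chi(W)$, $b_0(\partial W)=m$, $b_0(\emptyset)=0$, the rescaling multiplies the relevant homomorphism by $(n_i^2)^{(\chi(W)+m)/2}=n_i^{\chi(W)+m}$. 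Under the isomorphism $iL\cong L^{\nabla_i}$ (which sends $s_\alpha$ to the canonical basis vector $l_\alpha$, cf.\ the proof of Lemma \ref{lelecrocro2}) composed with the isomorphism of Theorem \ref{liftii+98} (which sends $l_\alpha$ to the generator of $A^{-\nabla_i}_{(S^1,\alpha)}$ determined by the standard fundamental cycle), the chosen vectors $i\overline\gamma_k=s_{g(x_k)}$ become the standard fundamental-cycle generators of the $A^{-\nabla_i}_{C_k}$. Hence, by the formula of Section \ref{ee3.3} for the cohomological HQFT on a cobordism, $\tau^{-\nabla_i}(-W,\widetilde g)$ applied to their tensor product equals the evaluation of a singular cocycle $\Theta$ representing $-\nabla_i$ on a fundamental relative singular $2$-chain of $-W$. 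Choosing $\Theta$ in its class to be trivial on a $K(g(H),1)$ inside $X$ and to restrict there to the cocycle data defining $\nabla^\gamma_i$ (possible since $H^2(g(H);K^*)=0$ because, as the genus of $W$ is positive and $g|_H$ is injective, $g(x_1),\dots,g(x_m)$ freely generate $g(H)$), this evaluation becomes $g^*(\nabla^\gamma_i)([W,\partial W])$. Combining with the rescaling factor and summing over $i\in I_0^\gamma$ gives (\ref{binan++}).

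The main obstacle is the last step: one must match the abstract generators $s_{g(x_k)}=i\overline\gamma_k$ of $iL_{g(x_k)}$ with the concrete fundamental-cycle generators of $A^{-\nabla_i}_{C_k}$ and pick the cocycle representing $-\nabla_i$ compatibly, so that the relative evaluation reproduces precisely the relative class $\nabla^\gamma_i$ and not a twist of it by an element of $H^1(g(H);K^*)$, while getting the orientation signs right. An equivalent, perhaps more transparent route for this step is to run the state sum of Sections \ref{statesu}--\ref{statesu++} directly for the biangular $G$-algebra ${}_{\nabla}K[G]$ --- the twisted group algebra of $G$ attached to a $2$-cocycle $\nabla$ representing $\nabla^\gamma_i$, whose $G$-center is $L^{\nabla_i}$ --- on a CW-decomposition of $W$ of the special shape used in the proof of Lemma \ref{firstkeyl} (the base points of $C_1,\dots,C_m$ and $w$ as the only vertices, $C_1,\dots,C_m$ and $c_1,\dots,c_m$ among the edges, a single face), placing $s_{g(x_k)}$ on the edge $C_k$; the face relation is the surface relator $\prod_j[a_j,b_j]\prod_k g(x_k)=1$, the state sum collapses to the product of the corresponding values of $\nabla$, and this product is by definition $g^*(\nabla^\gamma_i)([W,\partial W])$, the $n_i^2$-rescaling then producing the factor $n_i^{\chi(W)+m}$ by Lemma \ref{split+resc}.
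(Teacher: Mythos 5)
Your proposal follows essentially the same strategy as the paper's proof: use Lemma \ref{2dimunderalgg} and Lemma \ref{splitd} to decompose into a sum over orbits of $G$ on $\Bas(L)$, kill orbits of size $\geq 2$ via transfer and surjectivity of $g$, and for singleton orbits $\{i\}$ identify $(A^{\{i\}},\tau^{\{i\}})$ with an $n_i^2$-rescaled cohomological HQFT. You correctly flag the crux of the computation — matching $i\overline\gamma_k = s_{g(x_k)}$ with the fundamental-cycle generators so the evaluation produces $\nabla^\gamma_i$ and not a twist by an element of $H^1(g(H);K^*)$ — and the paper resolves it exactly as you suggest in your parenthetical: it builds the singular $2$-cocycle $\Theta$ directly from the cocycle $\{\nabla_{\alpha,\beta}\}$ attached to the chosen section $\{s_\alpha\}$, setting $\Theta(f_{\alpha,\beta})=\nabla_{\alpha,\beta}^{-1}$, chooses $\widetilde g$ so that its restriction to each $C_k$ factors through $u_{g(x_k)}$, and adjusts $\Theta$ by a coboundary to annihilate chains in $Y=K(g(H),1)\subset X$, using that $\{s_\alpha\}$ is multiplicative on $g(H)$ so $\nabla|_{g(H)}$ is already the trivial cocycle. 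Your alternative state-sum route at the end is a genuine variant not appearing in the paper; it is feasible but would require separately verifying that the $G$-center of the twisted group algebra of $G$ with respect to a cocycle representing $\nabla_i$ is $L^{\nabla_i}$, whereas the paper's route gets this identification for free from Theorem \ref{liftii+98}.
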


 \begin{proof}  The first part of the proof of Theorem \ref{firstbuiang+opik} applies directly and gives
\begin{equation}\label{kol} 
\tau_B(-W, \widetilde g) (\overline\gamma) 
=\sum_{i\in I_0}\, \tau^{\{i\}} (-W,\widetilde g) (\otimes_{k=1}^m i \overline \gamma_k)
\,.\end{equation}
For $i \in I_0 - I_0^\gamma$,   the $i$-th term on the right-hand side   is equal to $0$. It remains to show that
for every $i\in I_0^\gamma $,  
 \begin{equation}\label{binan++-}\tau^{\{i\}} (-W,\widetilde g) (\otimes_{k=1}^m i \overline \gamma_k)  = n_i^{\chi(W)+m} \,\,
g^*(\nabla^\gamma_i) ([W,\partial W]))\, .\end{equation}

We can assume that $X=K(G,1)$ contains a CW-subspace $Y=K(g(H),1)$ with the same base point $x$. Let $p:[0,1]\to S^1$,
$\{   u_\alpha:S^1\to X \}_{\alpha\in G}$,  $\Delta\subset \RR^3$,  and $\{f_{\alpha, \beta}:\Delta\to X\}_{\alpha, \beta\in G}$ 
be   as in the proof of Theorem \ref{liftii+98}.
 We can assume that $ u_\alpha(S^1)\subset Y$ for all $\alpha \in g(H)$ and $f_{\alpha,\beta}(\Delta)\subset Y$ for all $\alpha, \beta\in g(H)$.  Pick a singular $K^*$-valued 
2-cocycle $\Theta$ on $X$ such that $\Theta(f_{\alpha,\beta})=  \nabla_{\alpha, \beta}^{-1}$ for all $\alpha, \beta\in G$, where  $\nabla=\{\nabla_{\alpha, \beta}\}$ is the 2-cocycle on $G$    determined by the   vectors $ s_\alpha\in iL_\alpha $ chosen   above. Modifying if necessary $\Theta$ by a coboundary, we can  assume additionally  that $\Theta$ annihilates all singular chains in $Y$ and defines thus a  cohomology class   $
[\Theta]\in H^2(X,Y;K^*)$. This class  is equal to $-\nabla^\gamma_i$ under the identification $H^2(X,Y;K^*)=H^2(G, g(H); K^*)$.  Consider the   $X$-HQFT  $(A^{\Theta}, \tau^{\Theta})$ and its $k_i$-rescaling $(A^{\Theta}, \tau^{\Theta, k_i})$ where $k_i=n_i^2$. Let $R$ and $R'$ be the crossed $G$-algebras underlying these two $X$-HQFTs, respectively. For $\alpha\in G$, consider the generating vector   $$p_\alpha=\langle p\rangle \in R_\alpha=R'_\alpha=A^{\Theta}_{S^1,\alpha}\cong K $$ represented by the singular 1-simplex $p:[0,1]\to S^1$ viewed as  a fundamental cycle of the $X$-curve $(S^1,u_\alpha)$.   By the   proof of Theorem \ref{liftii+98}, $p_\alpha p_\beta= \nabla_{\alpha, \beta} \, p_{\alpha\beta}$ for all $\alpha, \beta\in G$. Moreover,  
 the formula $  s_\alpha  \mapsto p_\alpha$   defines   an isomorphism of crossed $\orlin$-algebras   $  iL\to R'$. By Lemma \ref{liftii},  this isomorphism lifts to
an isomorphism of $X$-HQFTs $(A^{\{i\}}, \tau^{\{i\}})\to (A^{\Theta}, \tau^{\Theta,k_i})$.
Therefore $$\tau^{\{i\}} (-W,\widetilde g) (\otimes_{k=1}^m i \overline \gamma_k)  = 
\tau^{\{i\}} (-W,\widetilde g) (\otimes_{k=1}^m   s_{g(x_k)} ) 
$$
$$= 
\tau^{\Theta,k_i} (-W,\widetilde g) (\otimes_{k=1}^m p_{g(x_k)} )
=n_i^{\chi(W)+m} \tau^{\Theta} (-W,\widetilde g) (\otimes_{k=1}^m p_{g(x_k)} ) \, .$$
We can choose $\widetilde g:W\to X$ in its homotopy class so that the restriction of $\widetilde g$ to the $k$-th component $C_k$ of $\partial W$ is the composition of an orientation preserving  
pointed homeomorphism   $f_k:C_k\to S^1$ with $ u_{g(x_k)}:S^1\to X$ for all $k=1,..., m$. Then $p_{g(x_k)}=\langle f_k^{-1}p \rangle \in    A^\Theta_{(C_k, \widetilde g)}$ is represented by the singular 1-simplex $ f_k^{-1}p:[0,1]\to C_k$ viewed as a fundamental cycle in $C_k$. By the definition of $\tau^{\Theta}$,    $$
\tau^{\Theta} (-W,\widetilde g) (\otimes_{k=1}^m \langle f_k^{-1}p \rangle )= \widetilde g^* ( [\Theta])( -[W, \partial W] )= g^*(\nabla^\gamma_i) ([W,\partial W]))\,  .$$
Combining these formulas   we obtain (\ref{binan++-}).
\end{proof}

  \section{Proof of Lemma \ref{firstkey1++}}\label{Tcoatb---+}

\subsection{Preliminaries}
Recall the notation of  Lemma \ref{firstkey1++}:  $q:G'\to G$ is a
group epimorphism with finite kernel $\Gamma$ and  $B=K[G']$ is the
associated  biangular $G$-algebra.  By Lemma \ref{firstbuiang+}, the
$G$-center  $(L=\psi_1(B), \eta_B\vert_L, \{ \psi_\alpha
\vert_L\}_{\alpha\in \orlin})$ of $B$  is a semisimple crossed
$\orlin$-algebra. Let $\ol =\mid(L)
  $ be the $G$-set    of its basic idempotents.
As we know, each $i\in I$ is the unit element of a
direct summand $\Mat_{n_i} (K)$ of $B_1$, where $n_i\geq 1$. Let $  \rho_i^+ :B_1\to \Mat_{n_i} (K)$ be the projection
onto this summand. Clearly,   $  \rho_i^+(i)= E_{n_i}$, where $E_n$
is the   unit $n\times n$ matrix. The homomorphism  $\rho_i^+$ determines a representation
 $\rho_i=  \rho_i^+ \vert_{\Gamma}:\Gamma \to  GL_{n_i} (K)$ of $\Gamma$ and  is recovered from
 $\rho_i$ as its linear extension.     We can
  describe $\rho_i$ as the unique (up to equivalence)
irreducible  representation of $\Gamma$ such that $\rho_i^+ (i)\neq
0$. It is clear that the mapping $I\to \Irr(\Gamma)$, $i\mapsto \rho_i$
is a bijection. We   show that it is $G$-equivariant.
 By definition of the action of $G$ on $I$, for   $\alpha\in G$ and $i \in
   I$,
$$\alpha i =\psi_\alpha(i)=
\vert  \Gamma \vert^{-1} \sum_{a\in q^{-1}(\alpha)}  \, a \,i \,
{a}^{-1}\, .$$ All the summands on the right-hand side are equal   because $i$ lies in the center of~$L_1$. Therefore $\alpha i
={a} i {a}^{-1}$ for any $a\in q^{-1}(\alpha)$. By Section
\ref{e21},
 $  {a \rho}_i (b)=  \rho_i({a}^{-1} b
{a})$ for all $b\in B_1$. Hence
 $ {a \rho}_i (\alpha i)  =
\rho_i(i)\neq 0 $. Therefore  $a \rho_i=\rho_{\alpha i}$ for all
$a\in q^{-1}(\alpha)$, i.e.,  the bijection $I\to \Irr(\Gamma)$,
$i\mapsto \rho_i$ is $G$-equivariant. We identify $I$ with
$\Irr(\Gamma)$ along this bijection.   Though we shall not need it,
note that each idempotent $i\in \ol$ can be computed from the
character $\chi_i$ of $\rho_i$  by   $i=\vert  \Gamma \vert^{-1} \,
\chi_i(1)  \,\sum_{h\in \Gamma } \chi_i (h) \, {h^{-1}}
 $, see, for instance,  \cite{Co}, Chapter 2,
Theorem 5.

In the sequel,   $I_0=I_0(q)$ is  the fixed point set of the action of $G$ on
$I=\Irr(\Gamma)$ and    $(A^\circ=A^\circ_B,
\tau^\circ=\tau^\circ_B)$ is the 2-dimensional HQFT with target
$X=K(G,1)$ determined by  $B$.

\subsection{The case $\partial W=\emptyset$}\label{ppo}  For a closed connected oriented
surface $W$, Lemma \ref{firstkey1++}  is equivalent   to the
following claim: for any pointed map $ \widetilde g:W\to X$ inducing
an epimorphism $g :\pi_1(W)\to G$,
\begin{equation}\label{t3-++-34vbvbvb}\tau^\circ( W,  \widetilde g)
  =
\sum_{i\in I_0  } \, n_i^{\chi(W)} \, g^*(\zeta^{\rho_i
})  ([-W ])   \,  .\end{equation}  To deduce
this formula  from (\ref{bianan}) it is enough to show that  $ \nabla_i=\zeta^{\rho_i}$ for all $i\in I_0$.  Pick $i\in I_0$ and 
set $\rho=\rho_i$ and $n=n_i=\dim\,
\rho$.  Fix
 the lifts   $\{{\widetilde \alpha}\in q^{-1}(\alpha)\}_{ \alpha\in
G}$ and the   matrices $\{M_\alpha\}_\alpha\in GL_n(K)$ as in
Section \ref{e22}. This data determines a $K^*$-valued 2-cocycle $
\{\zeta_{\alpha,\beta}\}_{\alpha,\beta}$ representing $\zeta^\rho $.
The proof of
 Lemma \ref{l1} yields an extension of $\rho:\Gamma\to GL_n(K)  $ to a
 map
 $\overline \rho:G'  \to GL_n(K)  $. Let $\overline\rho^+:B=K[G']\to \Mat_n(K) $ be the induced  $K$-linear
 map.
Then $\overline\rho^+\vert_{B_1}= \rho^+$ and in particular
$\overline\rho^+ (i)=  \rho^+ (i)=E_n$.
  Since
$\overline\rho^+  (B_1)=   \Mat_{n} (K)$, for each $\alpha \in G $,
there is $d_\alpha \in B_1$ such that  $\overline\rho^+   (d_\alpha
) =M_\alpha^{-1} = (\overline\rho^+   (
  {\widetilde \alpha}))^{-1}$.
  Set $$s_\alpha=i\, \psi_1( d_\alpha \, {
{\widetilde \alpha}} )  =   \vert \Gamma \vert^{-1} \sum_{a\in
\Gamma}\, i\,
   a \,d_\alpha \,{\widetilde \alpha} \,a^{-1}  \in i \, L_\alpha  \subset B_\alpha \, .$$
By (\ref{eeq2.8}) and the identity $\zeta_{\alpha,1}= \zeta_{1,
\alpha }=1$, we have $\overline\rho^+ (ab)=\overline\rho^+ (a)\,
\overline\rho^+ (b)$  for $a,b\in B$ provided $a\in B_1$ or $b\in
B_1$.   Therefore
$$
\overline\rho^+ (s_\alpha)= \vert  \Gamma \vert^{-1} \sum_{a\in
\Gamma}
  \overline\rho^+  (i\,  a \,d_\alpha \,{\widetilde \alpha} \,a^{-1})
 = \vert
\Gamma \vert^{-1} \sum_{a\in \Gamma} \overline\rho^+   (i )\,
\overline\rho^+ ( a) \, \overline\rho^+  (d_\alpha )\, \overline\rho^+  (
 {\widetilde \alpha})\,  \overline\rho^+   ({ a}^{-1})$$
$$ = \vert  \Gamma \vert^{-1}
  \sum_{a\in \Gamma} \overline\rho^+
(  a)\,  \overline\rho^+   (a^{-1}) = \vert  \Gamma \vert^{-1}
  \sum_{a\in \Gamma}  \rho
(  a)\,   \rho   (a^{-1}) =  E_n \, .$$   Thus, $s_\alpha $ is a
non-zero vector in $iL_\alpha\cong K$. We take $\widetilde 1=1\in
G'$ and $d_1=1\in B_1$, so that  $s_1=i$.  The cohomology class
$\nabla_i$
    is represented by the normalized
$K^*$-valued 2-cocycle $ \{\nabla_{\alpha,\beta}\}_{\alpha,\beta\in
G}$   defined from   $s_\alpha s_\beta =\nabla_{\alpha,\beta} \,
s_{\alpha \beta}$. Formula (\ref{eeq2.8}) implies that
$$ \nabla_{\alpha,\beta} \, E_n=  \overline\rho^+  ( \nabla_{\alpha,\beta} \, s_{\alpha \beta})= \overline\rho^+  (s_\alpha s_\beta)= \zeta_{\alpha, \beta}
\,\overline\rho^+  (s_{\alpha })\, \,\overline\rho^+  (s_{ \beta})=
\zeta_{\alpha, \beta} \, E_n\, .$$ Hence,
$\nabla_{\alpha,\beta}=\zeta_{\alpha, \beta} $ for all
$\alpha,\beta\in G_\rho $.

\subsection{The case $\partial W\neq \emptyset$}  Pick $k\in \{1,...,m\}$. 
Then $\gamma_k\in q^{-1}(g(x_k))\subset G'$ is one of the generating vectors of $B_{g(x_k)}$ and
$\psi_1( \gamma_k)$ is   its projection   to $L_{ g(x_k)}$. 
 Set $$\overline \gamma_k=\sum_{i\in I_0} \,i\, n_i \,  \psi_1( \gamma_k)\in L_{ g(x_k)}\, .$$ 
 We verify first  that $i \overline \gamma_k=0$ if and only if $t_{\rho_i} (\gamma_k)=0$.
Fix $i\in I_0$ and
 set $\rho=\rho_i$ and $n=n_i=\dim\,
\rho$.  Fix
 the lifts   $\{{\widetilde \alpha}\in q^{-1}(\alpha)\}_{ \alpha\in
G}$ and the   matrices $\{M_\alpha\}_\alpha\in GL_n(K)$ as in
Section \ref{e23-9}. In particular, for $\alpha=g( x_k)$ with $k=1,...,m$, we have $\widetilde {\alpha}=\gamma_k$
 and $\Tr\, M_{\alpha}= t_\rho(\gamma_k)\in \{0,1\}$.  
We extend  $\rho:\Gamma\to GL_n(K)  $ to    $\overline\rho^+:B=K[G']\to \Mat_n(K) $ as above. Observe that 
$i\overline \gamma_k=  i \,n\,  \psi_1(\gamma_k)\in iL_{g(x_k)}$. 
 A computation similar to the one in Section \ref{ppo} shows that
$$\Tr\,  \overline \rho^+(i\overline \gamma_k)= n \Tr\, \overline \rho^+ (i \psi_1(\gamma_k))=
 n \Tr\, \overline \rho^+ (\gamma_k)=n \Tr\, M_{g(x_k)}=n t_\rho(\gamma_k)\, .$$
Consider the   generating vectors 
$\{s_\alpha\in iL_\alpha\cong K\}_{\alpha \in G}$ defined in Section \ref{ppo}.  Comparing the traces of $\overline \rho^+(i\overline \gamma_k)$ and $
\overline \rho^+(s_{g(x_k)})=E_n$, we obtain that either    $i\overline \gamma_k=0$   and  
 $ t_{\rho} (\gamma_k)=0$
or $i\overline \gamma_k=s_{g(x_k)}$ and   $ t_{\rho} (\gamma_k)=1$. 

If $ i\overline \gamma_k\neq 0$ for all $k=1,...,m$, then the   vectors 
$\{s_\alpha \}_{\alpha\in G}$   satisfy the requirements  needed in the definition  $\nabla^\gamma_i$ 
given before   Theorem \ref{lik}. Indeed,  the   computation
 at the end of Section \ref{ppo} shows that the  2-cocycle $
\{\nabla_{\alpha,\beta}\}_{\alpha,\beta\in G}$  defined from   $s_\alpha s_\beta =\nabla_{\alpha,\beta} \,
s_{\alpha \beta}$ is equal to  
the   2-cocycle $
\{\zeta_{\alpha,\beta}\}_{\alpha,\beta\in G}$ derived in  Section \ref{e23-9} from the lifts 
  $\{{\widetilde \alpha}\}_{ \alpha\in
G}$. In particular, $\nabla_{\alpha,\beta}=\zeta_{\alpha,\beta}=1$ for all $\alpha, \beta\in g(H)$. Therefore $s_\alpha s_\beta = 
s_{\alpha \beta}$ for all $\alpha, \beta\in g(H)$. This and the equality  
$s_{g(x_k)}=i\overline \gamma_k$ for all $k$  are precisely the conditions used in the definition of $\nabla^\gamma_i$. 
We conclude that
 $\nabla^\gamma_i=\zeta^{\rho_i, \gamma} \in H^2(G, g(H);K^*)$ and  rewrite Formula (\ref{binan++-}) as 
 $$\tau^{\{i\}} (-W,\widetilde g) (\otimes_{k=1}^m i \overline \gamma_k)  = n_i^{\chi(W)+m} \,\,
g^*(\zeta^{\rho_i, \gamma}) ([W,\partial W]))\, .$$
Dividing both sides by $n_i^m$, we obtain that
$$\tau^{\{i\}} (-W,\widetilde g) (\otimes_{k=1}^m i \psi_1( \gamma_k))  = n_i^{\chi(W)} \,\,
g^*(\zeta^{\rho_i, \gamma}) ([W,\partial W]))\, .$$
Summing up over all $i\in I_0$ such that $i\overline \gamma_k\neq 0$ for all $k$ and using (\ref{kol}) (with $\overline \gamma_k$ 
replaced by $\psi_1( \gamma_k)$),  we obtain 
$$\tau_B (-W,\widetilde g) ([\gamma])  = \sum_{i\in I_0, \,  t_{\rho_i} (\gamma_k)=1\, {\rm {for\, all}}\, k} n_i^{\chi(W)} \,\,
g^*(\zeta^{\rho_i, \gamma}) ([W,\partial W]))\, .$$
This formula is equivalent to the one  claimed by the lemma.

\section{The case of a trivial
bundle}\label{Tcoatb}

For trivial   bundles, Theorem  \ref{t2} simplifies but remains
non-trivial. We begin with a computation of  the cohomology classes
$\zeta^{\rho, \gamma}$ for direct products.

\begin{lemma}\label{l2-j} Let $G, \Gamma$ be  groups, $G'=G\times \Gamma$ and
$q:G'\to G$ be the projection.

(a) The action of $G$ on  $\Irr (\Gamma)$ determined by $q$  is
trivial so that $I_0(q) =\Irr (\Gamma)$.

(b)  Let $\gamma=\{\gamma_k\}_{ k }$ be a finite subset of $G'$,
where $\gamma_k=(\alpha_k\in G,y_k\in \Gamma)$  and
    $\{\alpha_k\}_k $ generate a free group $H\subset G$ of rank
    $\vert \gamma\vert $. Let $$\partial: H_2(G,H;\ZZ)\to
    H_1(H;\ZZ)=\oplus_{k}\,  \, \ZZ\cdot [\alpha_k]$$
    be the  boundary homomorphism, where $[\alpha_k]\in H_1(H;\ZZ)$ is the
      class of~$\alpha_k$.
For any  $\rho\in \Irr  (\Gamma)$ and any  $Z\in H_2(G,H;\ZZ)$ such
that $\partial Z= \sum_k [\alpha_k] $,
\begin{equation}\label{secondid}
  \zeta^{\rho,  \gamma} (Z)\, \prod_{k} t_\rho (\gamma_k) =\prod_{k}
    \,  \Tr\, \rho(y_k) \, .\end{equation}
\end{lemma}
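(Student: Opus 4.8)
The plan is to reduce part (b) to the general machinery developed in Sections~\ref{e22}--\ref{e23-9}, exploiting the fact that for a direct product the conjugation action is trivial and the cocycles $\zeta_{\alpha,\beta}$ can be made completely explicit. Part (a) is immediate: if $G'=G\times\Gamma$ and $q$ is the projection, then for $a=(\alpha,1)\in G'$ and $h\in\Gamma$ one has $a^{-1}ha=h$, so $a\rho=\rho$ for every $\rho$ and every $a$; hence $G$ acts trivially on $\Irr(\Gamma)$ and $I_0(q)=\Irr(\Gamma)$. Moreover the conjugating matrices may be taken to be $M_\alpha=E_n$ for \emph{all} $\alpha\in G$ (with the canonical lifts $\widetilde\alpha=(\alpha,1)$), since $\widetilde\alpha\rho=\rho$.

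With these choices $L_{\alpha,\beta}=(\widetilde{\alpha\beta})^{-1}\widetilde\alpha\,\widetilde\beta=(1,1)$ for all $\alpha,\beta\in G$, so by \eqref{eeq2.3} the cocycle $\{\zeta_{\alpha,\beta}\}$ representing $\zeta^\rho$ is identically $1$; that is, $\zeta^\rho=0\in H^2(G;K^*)$. For the relative class, first note that each $\gamma_k=(\alpha_k,y_k)$ is $\rho$-adequate precisely when $\Tr M_{\alpha_k}^{(k)}\neq0$ where $a_k\rho=(M^{(k)})^{-1}\rho M^{(k)}$; but $\gamma_k\rho=\rho$ forces $M^{(k)}$ to be a scalar matrix by Schur, and the normalized choice $\nu(\gamma_k)$ with $\Tr\nu(\gamma_k)=1$ (when $\gamma$ is $\rho$-adequate) makes $\nu(\gamma_k)=(1/n)E_n\cdot n=\ldots$ — more directly, $\nu:\langle\gamma\rangle\to GL_n(K)$ is the homomorphism sending each $\gamma_k$ to a scalar, hence $\nu(\gamma_k)=c_k E_n$ with $nc_k=1$. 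Wait: we must be careful, because $\rho$-adequacy of $\gamma_k$ is a statement about the action of $\gamma_k$ on $\Gamma$, which here is trivial, so the conjugating matrix for $\gamma_k$ is scalar and its trace is $nc\neq0$; thus \emph{every} $\gamma_k$ is $\rho$-adequate, $\gamma$ is $\rho$-adequate, and $t_\rho(\gamma_k)=1$ for all $k$. So the left side of \eqref{secondid} is just $\zeta^{\rho,\gamma}(Z)$.

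It remains to compute $\zeta^{\rho,\gamma}(Z)$. Following Section~\ref{e23-9}, for $\alpha\in H$ we take $\widetilde\alpha$ to be the unique element of $q^{-1}(\alpha)\cap\langle\gamma\rangle$ and $M_\alpha=\nu(\widetilde\alpha)$; outside $H$ we keep the trivial lifts $(\alpha,1)$ and $M_\alpha=E_n$. Writing $\widetilde\alpha=(\alpha, w(\alpha))$ where $w:H\to\Gamma$ is the homomorphism with $w(\alpha_k)=y_k$, we get $M_\alpha=\nu(\widetilde\alpha)$, and since $\nu$ sends each $\gamma_k$ to the scalar $(1/n)E_n$ it sends $\widetilde\alpha$ to $(1/n)^{\ell(\alpha)}E_n$ where $\ell(\alpha)$ is the total exponent-length of $\alpha$ as a word in the $\alpha_k$ — but a cleaner route is to avoid the normalization and instead compute $\zeta^{\rho,\gamma}$ directly as the obstruction cocycle. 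Concretely, the plan is: express $Z\in H_2(G,H;\ZZ)$ with $\partial Z=\sum_k[\alpha_k]$, use the description of $\zeta^{\rho,\gamma}$ via the extension $\overline\rho^+:K[G']\to\Mat_n(K)$ as in Section~\ref{ppo}, and observe that the relative evaluation pairs the $H$-boundary part against the values $\nu(\gamma_k)$. Since $\zeta^\rho=0$ absolutely, $\zeta^{\rho,\gamma}$ lies in the image of $H^1(H;K^*)\to H^2(G,H;K^*)$, and its pairing with $Z$ is determined by $\partial Z$ together with the homomorphism $H\to K^*$ that records how the normalized choice $\nu$ fails to be the identity — namely $\alpha_k\mapsto \Tr\rho(y_k)$, because $\overline\rho^+(\gamma_k)=\overline\rho^+((\alpha_k,1)(1,y_k))=\rho(y_k)$ and the scalar relating this to the normalized $\nu(\gamma_k)$ has trace-ratio $\Tr\rho(y_k)$. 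Feeding $\partial Z=\sum_k[\alpha_k]$ through this homomorphism yields $\prod_k\Tr\rho(y_k)$, which is \eqref{secondid}.

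The main obstacle I expect is bookkeeping the relative cohomology evaluation: making precise that $\zeta^{\rho,\gamma}(Z)$ depends only on $\partial Z\in H_1(H;\ZZ)$ (legitimate because $\zeta^\rho=0$, so $\zeta^{\rho,\gamma}$ is the image of a class in $H^1(H;K^*)\cong\operatorname{Hom}(H,K^*)$ under the connecting map of the pair $(G,H)$), and then identifying that $H^1$-class with the homomorphism $\alpha_k\mapsto\Tr\rho(y_k)$. Once the scalar-matrix computation $\overline\rho^+(\gamma_k)=\rho(y_k)$ and $\nu(\gamma_k)=(\Tr\rho(y_k))^{-1}\overline\rho^+(\gamma_k)$ — equivalently $M_{\alpha_k}=\rho(y_k)$ up to the $K^*$-ambiguity — is in hand, the rest is a direct substitution into the definition of $\zeta^{\rho,\gamma}$ and the long-exact-sequence description of the evaluation pairing, exactly paralleling the computation at the end of Section~\ref{ppo}.
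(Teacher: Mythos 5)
Your part (a) is fine, and the first half of your discussion (taking the trivial lifts $\widetilde\alpha=(\alpha,1)$, which give $M_\alpha=E_n$ and $\zeta^\rho=0$) is also fine. But there is a genuine error in your computation of $t_\rho$. You write ``$\gamma_k\rho=\rho$ forces $M^{(k)}$ to be a scalar matrix by Schur,'' hence conclude that every $\gamma_k$ is $\rho$-adequate and $t_\rho(\gamma_k)=1$ for all $k$. This is false: you have confused equivalence of representations with equality. The element $\gamma_k=(\alpha_k,y_k)$ acts on $\Gamma=\{1\}\times\Gamma\subset G'$ not trivially but by conjugation by $y_k$, i.e.\ $\gamma_k^{-1}(1,h)\gamma_k=(1,y_k^{-1}hy_k)$. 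Thus $\gamma_k\rho(h)=\rho(y_k^{-1}hy_k)=\rho(y_k)^{-1}\rho(h)\rho(y_k)$, the conjugating matrix is $M=\rho(y_k)$ (not a scalar in general, only determined up to scalar), and $t_\rho(\gamma_k)=0$ exactly when $\Tr\rho(y_k)=0$. The relation $\gamma_k\rho\sim\rho$ (equivalence) is automatic, but that is precisely the content of $\rho\in I_0(q)$; it does not say the conjugating matrix is scalar. This is not a cosmetic slip: formula \eqref{secondid} is designed so that when some $\Tr\rho(y_k)=0$ both sides vanish, which happens because $t_\rho(\gamma_k)$ kills the left side while the product of traces kills the right side. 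Your version would assert $\zeta^{\rho,\gamma}(Z)=\prod_k\Tr\rho(y_k)$ unconditionally, which is wrong on the face of it since $\zeta^{\rho,\gamma}$ is then $0$ by definition.

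As to the rest: once the adequacy computation is corrected (reduce to the case $\Tr\rho(y_k)\neq0$ for all $k$, so that both sides being nonzero is possible), your plan is essentially the paper's. The paper takes the two choices of lifts and conjugating matrices on $H$ --- the one from $\rho\circ q^-$ with $q^-:H\to\Gamma$, $\alpha_k\mapsto y_k$, which gives a trivial cocycle, and the normalized one $\mu:\alpha_k\mapsto(\Tr\rho(y_k))^{-1}\rho(y_k)$ required in Section~\ref{e23-9} --- observes that they are projectively equal via a homomorphism $\psi:H\to K^*$ with $\psi(\alpha_k)=\Tr\rho(y_k)$, and concludes that $\zeta^{\rho,\gamma}$ is represented by the coboundary of the extension of $\psi$ by $1$ on $G-H$. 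Your long-exact-sequence framing (``$\zeta^{\rho,\gamma}$ lies in the image of $H^1(H;K^*)\to H^2(G,H;K^*)$, so its pairing with $Z$ is controlled by $\partial Z$'') is a legitimate and slightly more structural way of saying the same thing, and the identification $\alpha_k\mapsto\Tr\rho(y_k)$ is correct. But you should fill in the adequacy step correctly and also clean up the abandoned mid-paragraph computation (``$\nu(\gamma_k)=(1/n)E_n\cdot n=\ldots$'') --- as it stands, $\nu(\gamma_k)$ is $(\Tr\rho(y_k))^{-1}\rho(y_k)$, not a multiple of the identity.
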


\begin{proof} Claim (a) is obvious. We prove (b).
We first compute the function $t_\rho:G'\to \{0,1\} $. Let
$a=(\alpha ,y)  \in G'$ with $\alpha\in G$ and $y\in \Gamma$. Then
$a\rho  =y\rho=M^{-1} \rho M $ for $M=\rho (y)$. Thus, $t_\rho(a)=0$
if and only if $\Tr \, \rho (y)= 0$. To  prove  (\ref{secondid}) it
suffices to consider the case where
      $\Tr\, \rho(y_k)\neq 0$  and $t_\rho(\gamma_k)=1$ for all $k$.
Set $n=\dim \, \rho$.    Define a lift $\alpha\mapsto \widetilde
\alpha'$ of elements  of $ G$ to $G'$ by $\widetilde
\alpha'=(\alpha, 1)$ for $\alpha\in G-H$ and $\widetilde \alpha'=
(\alpha, q^-(\alpha))$ for $\alpha\in H$, where $q^-:H\to \Gamma$ is
the homomorphism sending $\alpha_k$ to $y_k$ for  all $k$. In the
role of associated conjugating matrices (as in Section \ref{e22}) we
take $M'_\alpha=E_n$ for $\alpha\in G-H$ and $M'_\alpha=\rho
(q^-(\alpha))$ for $\alpha\in H$. It follows from the definitions
that the corresponding 2-cocycle $ \{\zeta'_{\alpha,\beta}\} $ is
trivial. The present  choice of conjugating matrices $M'_\alpha$
differs from the one  in the definition of  $\zeta^{\rho, \gamma}$
where it  is arbitrary on $G-H$ and is given on $H$ by the
homomorphism  $\mu:H\to GL_n(K)$  sending $\alpha_k$ to    $ (\Tr\,
\rho(y_k))^{-1} \, \rho (y_k)$  for  all~$k$. The homomorphisms
$\mu, \rho q^-:H\to GL_n(K)$ are projectively equal, so there is a
homomorphism  $\psi:H\to K^*$ such that
  $ \rho q^-  =\psi \,\mu $. We extend $\psi$ to $G$ by $\psi(G-H)=1$
and conclude that  $\zeta^{\rho, \gamma}$  is represented by the
  cocycle $\{\psi_{\alpha\beta}^{-1} \, \psi_\alpha \,
\psi_\beta \}_{\alpha, \beta\in G}$. Clearly, $\psi(\alpha_k)= \Tr\,
\rho(y_k)$ for all $k$. Therefore
    if  $Z\in H_2(G,H;\ZZ)$ and $\partial Z= \sum_{k} r_k
    [\alpha_k]
    $ with $ r_k\in \ZZ  $, then
$\zeta^{\rho,  \gamma} (Z)=\prod_{k}
    \,  (\Tr\, \rho(y_k))^{r_k}$. This implies (\ref{secondid}).
\end{proof}

  This lemma allows us to simplify  Formula (\ref{t3-++-})
   in the case where $G=\pi$, $g=\id:\pi\to \pi$, $G'=G\times \Gamma$, and $q:G'\to G$ is
the projection. This gives the following theorem. Let $W$ be a
compact connected oriented surface of positive genus $d$ with    $m\geq 0$
boundary components and base point $w$. Let $x_1,..., x_m\in
\pi_1(W,w) $ be as in Section \ref{twolemmas}. Then for any finite
group $\Gamma$ and any conjugacy classes $L_1,..., L_m$ in $\Gamma$,
the number of homomorphisms $  \pi_1(W,w)\to \Gamma $ sending
  $ x_k$ to $ L_k$ for all $k=1,..., m$ is equal to
$$ \vert  \Gamma \vert^{2d-1} \,
\sum_{\rho\in \Irr(\Gamma) } \left\{
(\dim \, \rho)^{2-2d-m} \,
  \prod_{k=1}^m \sum_{y\in L_k}\Tr \, \rho (y) \right\}\,
.$$ This theorem is due to     Frobenius \cite{Fr} for $W=S^1\times
S^1$ and to Mednykh \cite{Me} for all~$W$, see also \cite{Jo}.
Theorem \ref{t2}  for trivial bundles (over surfaces of positive genus) is  an
equivalent reformulation  of
 the Frobenius-Mednykh theorem.


\begin{thebibliography}{CJKLS}



  \bibitem[BP]{BP} Bachas, C., Petropoulos, P.,   Topological Models on the
Lattice and a Remark on String Theory Cloning, Comm. Math.  Phys.
152 (1993), 191--202.


 \bibitem[BT]{BT} Brightwell, M., Turner, P., Representations of the homotopy surface
category of a simply connected space. J.\ Knot Theory Ramifications
9 (2000),   855--864, also math.AT/9910026.

 \bibitem[Co]{Co} Collins, M. J. Representations and characters of finite groups.
 Cambridge Studies in Advanced Mathematics, 22. Cambridge University Press, Cambridge, 1990.



\bibitem[DW]{DW} Dijkgraaf, R., Witten, E.: Topological Gauge Theories and Group
Cohomology, Comm. Math. Phys. 129 (1990),   393–429.


                    \bibitem[FQ]{FQ}
                    Freed, G., Quinn, F.,    Chern-Simons theory with finite gauge
group, Comm. Math.  Phys.  156 (1993), 435--472.


\bibitem[Fr]{Fr}  Frobenius, G.,  \"Uber gruppencharaktere, Sitzber. K\"oniglich
Preuss. Akad. Wiss. Berlin (1896), 985--1021.


\bibitem[FHK]{FHK} Fukuma, M., Hosono, S.,  Kawai, H.,   Lattice Topological Field
Theory in Two Dimensions, Comm. Math.  Phys.  161 (1994), 157--175.

  \bibitem[Jo]{Jo} Jones, G.,  Enumeration of homomorphisms and surface-coverings.
Quart. J. Math. Oxford Ser. (2) 46 (1995), no. 184, 485--507.




                    \bibitem[Me]{Me} Mednykh, A., Determination of the number of nonequivalent
coverings over a compact Riemann surface.  Dokl. Akad. Nauk SSSR 239
(1978),  269--271 (Russian). Soviet Math.\ Dokl., 19 (1978),
318--320 (English transl.).




\bibitem[NT]{NT} Natapov, M., Turaev, V., On certain enumeration problems in group theory, in preparation.




\bibitem[Tu1]{Tu1} Turaev, V., Quantum invariants of knots and 3-manifolds.
Studies in Mathematics 18, Walter de Gruyter, 588 p., 1994.


\bibitem[Tu2]{Tu2} Turaev, V., Homotopy field theory in dimension 2 and
group-algebras, math.QA/9910010.



\bibitem[Tu4]{Tu4}Turaev, V., On certain enumeration problems in two-dimensional
    topology, arXiv:0804.1489.


\bibitem[Turn]{Turn} Turner, P.,  A functorial approach to differential
characters.  Algebr. Geom. Topol. 4 (2004), 81--93.

                    \end{thebibliography}
                    \end{document}